 \definecolor{dark-red}{rgb}{0.4,0.15,0.15}
\renewcommand{\emptyset}{\varnothing}
\newcommandx{\sq}[5][5=1]{
  \begin{tikzpicture}[
        baseline=(current bounding box.mid),
        empty/.style={circle, draw=black, inner sep=#5pt},
        dot/.style={empty,fill=black},
        grid/.style={black!70, dotted},
      ]
  \foreach \x in {0,#5}
    \foreach \y in {0,#5}{
       \node[dot] at (\x,\y) {}; 
       \ifthenelse{  \lengthtest{\x pt < #5 pt}  }{
         \draw[grid] (\x,\y) -- (\x+#5,\y);
       }{}
       \ifthenelse{  \lengthtest{\y pt < #5 pt}  }{
         \draw[grid] (\x,\y) -- (\x,\y+#5);
       }{}
  }

  \ifthenelse{ #1 = 1} {
    \draw (0,0) -- (0,#5);
  }{}
  \ifthenelse{ #2 = 1} {
    \draw (0,0) -- (#5,0);
  }{}
  \ifthenelse{ #3 = 1} {
    \draw (0,#5) -- (#5,#5);
  }{}
  \ifthenelse{ #4 = 1} {
    \draw (#5,0) -- (#5,#5);
  }{}
\end{tikzpicture}
}
\newcommandx{\subgrid}[5][5=1]{
  \begin{tikzpicture}[
      baseline=(current bounding box.mid),
      empty/.style={circle, draw=black, inner sep=#5*3pt, outer sep=0pt},
      dot/.style={empty, fill=black}
      ]
      \foreach \x in {0,#5}
        \foreach \y in {0,#5}
        {
          \node[empty] at (\x, \y) {};
        }
  \ifthenelse{ #1 = 1} {
    \node[dot] at (0,0) {};
  }{}
  \ifthenelse{ #2 = 1} {
    \node[dot] at (0,#5) {};
  }{}
  \ifthenelse{ #3 = 1} {
    \node[dot] at (#5,0) {};
  }{}
  \ifthenelse{ #4 = 1} {
    \node[dot] at (#5,#5) {};
  }{}
\end{tikzpicture}
}
\newcommand{\Q}{\mathbb{Q}} 
\newcommand{\QQ}{\Q}
\newcommand{\NN}{\mathbb{N}}
\newcommand{\ZZ}{\mathbb{Z}}
\renewcommand{\setminus}{\smallsetminus}
\newcommand{\CMon}{\mathrm{CMon}}
\newcommand{\Sub}{\operatorname{Sub}}
\newcommand{\SubMon}{\operatorname{SubMon}}
\newcommand{\SubSemi}{\operatorname{SubSemi}}
\newcommand{\SatTr}{\operatorname{SatTr}}
\newcommand{\End}{\operatorname{End}}
\newcommand{\stirling}[2]{\begin{Bmatrix}#1\\#2\end{Bmatrix}}
\newcommand{\chain}[1]{[#1]}
\newcommand{\monop}{\cdot}
\numberwithin{equation}{section} 
\theoremstyle{definition}
\newtheorem*{theorem*}{Theorem}
\newtheorem*{question}{Question}
\newtheorem*{cor*}{Corollary}
\newaliascnt{theorem}{equation}  
\newtheorem{theorem}[theorem]{Theorem}  
\newaliascnt{dodeca}{equation}  
\newaliascnt{prop}{equation}  
\newtheorem{prop}[prop]{Proposition}
\newaliascnt{lemma}{equation}  
\newtheorem{lemma}[lemma]{Lemma}
\newaliascnt{corollary}{equation}  
\newtheorem{corollary}[corollary]{Corollary}
\newaliascnt{claim}{equation}  
\newaliascnt{conjecture}{equation}  
\newtheorem{conjecture}[conjecture]{Conjecture}
\theoremstyle{definition}
\newaliascnt{defn}{equation}  
\newtheorem{defn}[defn]{Definition}
\newaliascnt{example}{equation}  
\newtheorem{example}[example]{Example}
\theoremstyle{remark}
\newaliascnt{remark}{equation}  
\newtheorem{remark}[remark]{Remark}
\newaliascnt{convention}{equation}  
\theoremstyle{plain}
\newcommand{\aref}[1]{\autoref{#1}}
\begin{document}

\makeatletter
\def\@evenhead{\hfill \small
  \textsc{collaborative mathematics research group}\hfill\scriptsize\thepage}
\makeatother

\title{Enumerating submonoids of finite commutative monoids}

\author[Kirkpatrick]{Caoilainn Kirkpatrick}
\address{Reed College}
\email{k.caoilainn@gmail.com}

\author[el Mahmoud]{Amelie el Mahmoud}
\address{Reed College}
\email{aelmahmoud7@gmail.com}

\author[Ormsby]{Kyle Ormsby}
\address{Reed College}
\email{ormsbyk@reed.edu}

\author[Osorno]{Ang\'elica M. Osorno}
\address{Reed College}
\email{aosorno@reed.edu}

\author[Schandelmeier-Lynch]{Dale Schandelmeier-Lynch}
\address{Reed College}
\email{daleschandelmeierlynch@gmail.com}

\author[Shahar]{Riley Shahar}
\address{University of Pennsylvania}
\email{rshahar@sas.upenn.edu}

\author[Yi]{Lixing Yi}
\address{Reed College}
\email{yilixing233@gmail.com}

\author[Young]{Avery Young}
\address{Reed College}
\email{averycyoung1@gmail.com}

\author[Zhu]{Saron Zhu}
\address{University of Michigan}
\email{zsaron@umich.edu}

\begin{abstract}
Given a finite commutative monoid $M$, we show that submonoids of $M\times [n]$ --- where $[n] = \{0,1,\ldots,n\}$ is equipped with the max operation $\vee$ --- may be enumerated via the transfer matrix method. When $M$ is also idempotent, we show that there are finitely many integers $\lambda$ and rational numbers $b_\lambda$ (only depending on $M$) such that the number of submonoids of $M\times [n]$ is $\sum_\lambda b_\lambda\lambda^n$. This answers a question of Knuth regarding ternary (and higher order) max-closed relations, and has applications to the enumeration of saturated transfer systems in equivariant infinite loop space theory.
\end{abstract}

\maketitle

\setcounter{tocdepth}{1}
\tableofcontents

\section{Introduction}\label{sec:intro}

Subgroup lattices have long played an important role in the theory of finite groups. While $\Sub G$ is far from a complete invariant of a finite group $G$, many important properties of $G$ are faithfully represented by the isomorphism type of $\Sub G$. In light of this, subgroup lattices have enjoyed intense study; see \cite{schmidt} for an overview.

In this paper, we turn our attention to a more general but less ubiquitous algebraic structure: monoids and their submonoid lattices, with particular emphasis on finite commutative and finite commutative idempotent monoids. See \autoref{sec:submonoids} for recollections of essential definitions.

The main problem we attack is easy to state. For a natural number $n$, let $\chain n = \{0,1,\ldots,n\}$ with the operation $k\vee \ell = \max\{k,\ell\}$.

\begin{question}
Given a finite monoid $M$, to what extent does $\SubMon M$ determine the cardinality of $\SubMon(M\times\chain n)$?
\end{question}

Varying the hypotheses on $M$, we arrive at several satisfactory answers. Our most general result employs the \emph{transfer matrix method}, which is nicely summarized in \cite[Chapter 4]{EC}. The idea is that, when counted with appropriate weights, length $n$ walks in a particular weighted directed graph $G(M)$ count the submonoids of $M\times \chain n$. These weighted walks are computed by powers of the adjacency matrix $W=W(M)$ of $G(M)$, thus allowing us to bring linear algebraic tools to bear on the problem.

\begin{theorem*}[\autoref{thm:walks}]
Suppose $M$ is a finite commutative monoid. Then there is a lower triangular matrix $W$ of natural numbers with rows and columns indexed by a linear extension of $\SubMon M$ such that
\[
  \#\SubMon(M\times \chain n) = \sum_{A,B\in \SubMon M} (W^n)_{A,B}.
\]
\end{theorem*}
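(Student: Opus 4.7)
The plan is to decompose each submonoid $S \subseteq M \times \chain n$ by level. For $k \in \chain n$, set
\[
  S_k := \{m \in M : (m, k) \in S\} \quad\text{and}\quad A_k := \bigcup_{j \leq k} S_j,
\]
and use the cumulative chain $A_0 \subseteq \cdots \subseteq A_n$ as the walk recorded by the transfer matrix. The goal is to show that the number of submonoids $S$ producing a prescribed cumulative chain $(A_0, \ldots, A_n)$ factors as a product of local weights depending only on consecutive pairs.

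I would first verify that each $A_k$ is a submonoid of $M$ and that the chain is increasing. The product rule $(m_1, k_1)(m_2, k_2) = (m_1 m_2, k_1 \vee k_2)$ immediately yields $S_j \cdot S_k \subseteq S_{j \vee k}$, from which closure of $A_k$ under multiplication follows; since $(1, 0) \in S$ forces $1 \in S_0 \subseteq A_k$, each $A_k$ is a submonoid. Monotonicity is built into the definition.

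The heart of the argument is the following bijection: submonoids $S$ whose cumulative chain is a prescribed $(A_0, \ldots, A_n)$ correspond to tuples $(S_0, \ldots, S_n)$ with $S_0 = A_0$ and, for each $k \geq 1$, $S_k$ an ideal of the monoid $A_k$ that contains the set $A_k \setminus A_{k-1}$ of ``new'' elements. The ideal property is forced because any $a \in A_k$ lies in some $S_j$ with $j \leq k$, so $a \cdot S_k \subseteq S_j \cdot S_k \subseteq S_k$; the containment of new elements is forced because an element first appearing in $A_k$ cannot have appeared in a lower slice. Conversely, any such tuple assembles into a submonoid $\bigcup_k S_k \times \{k\}$: closure under the product reduces to the ideal condition $A_k \cdot S_k \subseteq S_k$ combined with $A_j \subseteq A_k$ for $j \leq k$, the identity is $(1,0) \in A_0 \times \{0\}$, and the cumulative slices recover $(A_k)$ by induction on $k$.

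With this in hand, define $W_{A, B}$ to be the number of ideals of $B$ containing $B \setminus A$ when $A \subseteq B$ (and zero otherwise). Since $S_0 = A_0$ is forced, counting submonoids by chain gives
\[
  \#\SubMon(M \times \chain n) = \sum_{A_0 \subseteq \cdots \subseteq A_n}\, \prod_{k=1}^n W_{A_{k-1}, A_k} = \sum_{A, B \in \SubMon M}(W^n)_{A, B},
\]
the second equality being the transfer matrix method of \cite[Ch.~4]{EC}. The entries of $W$ are natural numbers by construction, and $W_{A, B} = 0$ unless $A \subseteq B$, so ordering $\SubMon M$ by a suitable linear extension renders $W$ lower triangular. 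The main obstacle I anticipate is pinning down the local ideal condition exactly and confirming independence of the choices across levels; once that structural bijection is in hand, the transfer matrix packaging is essentially formal bookkeeping.
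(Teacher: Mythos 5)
Your proposal is correct and rests on the same key observation as the paper's proof: a submonoid $S\le M\times\chain n$ is determined by its slices $S_k$, where $S_0 = A_0$ and each $S_k$ ($k\ge 1$) is an ideal of the cumulative projection $A_k$ containing $A_k\setminus A_{k-1}$. The paper packages this as an inductive recurrence on $n$ (\autoref{lem:upclosed-recurrence}), peeling off only the top slice $S_{n+1}$ as an ideal of $\pi_{n+1}R$; your version unrolls the whole chain at once, which yields the transpose of the paper's matrix (your $W_{A,B}$ with $A\subseteq B$ equals the paper's $w(B,A)$), but the sum $\sum_{A,B}(W^n)_{A,B}$ is unaffected and the combinatorics is identical.
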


The entries of $W$ are combinatorial in nature, with
\[
  W_{A,B} = \#\{I\subseteq A\mid I\text{ an ideal of $A$ and }I\cup B = A\}.
\]
(See \autoref{defn:ideal} and \autoref{defn:submon_graph}.) Carefully analyzing these numbers for $M$ a finite Abelian group results in a surprising corollary:

\begin{theorem*}[\autoref{thm:group}]
Suppose $M$ is a finite Abelian group and $n$ is a natural number. Then
\[
  \#\SubMon(M\times \chain n) = 2^n\cdot\sum_{m\ge 0}c_m2^{-m}\binom{n}{m} \sim \frac{1}{h!} c_h n^h 2^{n-h},
\]
where $c_m$ is the number of length $m$ chains in the subgroup lattice of $M$ and $h$ is the \emph{height} of $\Sub M$, \emph{i.e.}, the length of the longest chain in the subgroup lattice.
\end{theorem*}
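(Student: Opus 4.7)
The plan is to compute the transfer matrix $W = W(M)$ from \autoref{thm:walks} explicitly by exploiting the fact that $M$ is a group, and then apply the binomial theorem.

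First I would observe that, since $M$ is a finite Abelian group, every submonoid of $M$ is already a subgroup, so $\SubMon M = \Sub M$. For a subgroup $A \in \Sub M$, the only ideals of $A$ (viewed as a commutative monoid) are $\emptyset$ and $A$ itself: for any $i$ in a nonempty ideal $I \subseteq A$, multiplication by $i$ is a bijection on $A$, so $A = A \cdot i \subseteq I$. This trivialization of the ideal structure is the crux of the argument and the only place where the group hypothesis is used.

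From here the matrix entries $W_{A,B} = \#\{I \subseteq A : I \text{ an ideal of } A,\ I \cup B = A\}$ are straightforward to tabulate. Taking $I \in \{\emptyset, A\}$, one checks that $W_{A,A} = 2$, $W_{A,B} = 1$ when $B \subsetneq A$, and $W_{A,B} = 0$ otherwise. Equivalently, $W = 2I + N$, where $N$ is the strict order matrix of $\Sub M$ with $N_{A,B} = 1$ iff $B < A$ and $0$ otherwise. Since $2I$ and $N$ commute, the binomial theorem yields
\[
W^n = (2I + N)^n = \sum_{m \geq 0} \binom{n}{m}\, 2^{n-m} N^m.
\]

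Next I would identify $(N^m)_{A,B}$ as the number of strictly descending chains $A = A_0 > A_1 > \cdots > A_m = B$ in $\Sub M$, so that summing over all pairs $(A, B)$ produces exactly $c_m$, the number of length-$m$ chains. Substituting into \autoref{thm:walks} gives
\[
\#\SubMon(M \times \chain n) = \sum_{A,B \in \Sub M}(W^n)_{A,B} = \sum_{m \geq 0} \binom{n}{m}\, 2^{n-m} c_m = 2^n \sum_{m \geq 0} c_m\, 2^{-m} \binom{n}{m},
\]
proving the identity. For the asymptotic, since $c_m = 0$ for $m > h$ and $c_h > 0$, the right-hand sum is a polynomial in $n$ of degree exactly $h$, with leading term $c_h \binom{n}{h}\, 2^{-h} \cdot 2^n \sim \tfrac{c_h}{h!}\, n^h\, 2^{n-h}$ as $n \to \infty$.

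The main (and essentially only) obstacle is the ideal calculation in the first paragraph; once one recognizes that the group hypothesis collapses the ideal lattice of each $A$ to $\{\emptyset, A\}$, the matrix $W$ takes the particularly simple form $2I + N$ and the binomial expansion automatically separates the "multiplicity-2" diagonal contribution (producing the factor $2^n$) from the counting of strict chains (producing the factors $c_m$).
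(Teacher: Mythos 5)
Your proposal is correct and follows essentially the same route as the paper's proof: both reduce to the observation that submonoids of a finite group are subgroups and that each subgroup has only the two ideals $\emptyset$ and $A$, yielding $W = 2I + L$, and then apply the binomial theorem and interpret $\mathbf{1}^\top L^m \mathbf{1}$ as the chain count $c_m$. Your justification that a nonempty ideal of a group must be the whole group is a detail the paper asserts without proof, but otherwise the arguments coincide.
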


Specializing in another direction, we consider the case of finite idempotent monoids. These are equivalent to finite join-semilattices; see \autoref{ex:monoids}(b).

\begin{theorem*}[\autoref{thm:diag}, \autoref{cor:fmla}]
Suppose $M$ is a finite commutative idempotent monoid. Then the matrix $W(M)$ is diagonalizable. It follows that there is a finite set of positive integers $\Lambda = \Lambda(M)$ and rational numbers $b_\lambda = b_\lambda(M)$ for $\lambda\in \Lambda$ such that
\[
  \#\SubMon(M\times \chain n) = \sum_{\lambda\in \Lambda}b_\lambda \lambda^n.
\]
\end{theorem*}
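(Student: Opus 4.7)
The plan is to prove that $W = W(M)$ is diagonalizable by constructing an explicit basis of right eigenvectors, after which the formula for $\#\SubMon(M \times \chain n)$ drops out of $\mathbf{1}^T W^n \mathbf{1}$. The decisive combinatorial input, special to the idempotent setting, is strict monotonicity of the diagonal entries of $W$ along chains in $\SubMon M$.

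The first step is this monotonicity lemma. Since $I \cup A = A$ automatically for $I \subseteq A$, the diagonal entry $W_{A,A}$ is the total number of ideals of $A$. Because each submonoid $A$ is itself idempotent commutative, $A$ is a join-semilattice whose ideals are precisely its upper sets, and upper sets are in bijection (via taking minimal elements) with antichains of $A$. If $A \subsetneq C$ in $\SubMon M$, every antichain of $A$ is an antichain of $C$ (the order on $A$ is the restriction of that on $C$), while any $c \in C \setminus A$ yields an antichain $\{c\}$ of $C$ not contained in $A$; thus antichains of $A$ inject strictly into antichains of $C$, so $W_{A,A} < W_{C,C}$ whenever $A < C$.

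Next, for each $A \in \SubMon M$ set $\lambda_A := W_{A,A}$ and construct a vector $u_A = e_A + \sum_{B > A} d_A(B)\, e_B$ by the recursion
\[
  (W_{B,B} - \lambda_A)\, d_A(B) \;=\; -\sum_{A \leq B' < B} W_{B,B'}\, d_A(B').
\]
Monotonicity ensures $W_{B,B} - \lambda_A \neq 0$ for $B > A$, so $d_A(B) \in \QQ$ is uniquely determined by induction along a linear extension of $\SubMon M$. A direct check gives $W u_A = \lambda_A u_A$: for any $C$ one has $(Wu_A)_C = \sum_{B : A \leq B \leq C} W_{C,B}\, d_A(B)$ (using that $W_{C,B} \neq 0$ forces $B \leq C$ and $(u_A)_B \neq 0$ forces $B \geq A$), which equals $\lambda_A\, d_A(C)$ by the recursion when $A \leq C$ and is an empty sum when $A \not\leq C$, matching $\lambda_A (u_A)_C = 0$.

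Finally, order $\SubMon M = \{A_1, \ldots, A_N\}$ by a linear extension and collect $u_{A_1}, \ldots, u_{A_N}$ as columns of a matrix $P$. Then $(u_{A_j})_{A_i} = 0$ whenever $A_i \not\geq A_j$ (so in particular whenever $i < j$) and equals $1$ for $i = j$, so $P$ is lower triangular with unit diagonal, hence invertible over $\QQ$. This gives an eigenbasis, proving diagonalizability. Combining $W = P D P^{-1}$ (with $D$ diagonal) with \aref{thm:walks} then yields
\[
  \#\SubMon(M \times \chain n) \;=\; \mathbf{1}^T P D^n P^{-1} \mathbf{1} \;=\; \sum_{\lambda \in \Lambda} b_\lambda\, \lambda^n,
\]
with $\Lambda = \{W_{A,A} : A \in \SubMon M\} \subseteq \mathbb{Z}_{>0}$ finite and $b_\lambda = \sum_{A : W_{A,A} = \lambda} (\mathbf{1}^T P)_A\, (P^{-1}\mathbf{1})_A \in \QQ$. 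The sole non-routine step is the monotonicity lemma; the rest is standard triangular linear algebra, and it is precisely the failure of monotonicity in the group setting that forces the polynomial prefactors appearing in \aref{thm:group}.
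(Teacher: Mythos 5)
Your proof is correct and hinges on the same decisive combinatorial input as the paper's, namely the strict monotonicity of $A \mapsto w(A,A)$ along chains in $\SubMon M$ (the paper's \aref{lemma:order_rev}, proven identically via the ideal/upset/antichain dictionary). Where you diverge is in how the linear algebra is packaged. The paper reorders $\SubMon M$ so that equal diagonal entries of $W$ are contiguous, observes that monotonicity forces the off-diagonal entries within each constant-diagonal block to vanish (equal diagonal values imply incomparable submonoids), and then cites an abstract criterion (\aref{lemma:diag}) for diagonalizability of triangular matrices. You instead construct an explicit right eigenbasis by solving the triangular linear system $(W_{B,B} - \lambda_A)\,d_A(B) = -\sum_{A \le B' < B} W_{B,B'}\,d_A(B')$, using monotonicity only to guarantee the pivots $W_{B,B} - \lambda_A$ are nonzero, and then observe that the matrix of eigenvectors is lower unitriangular, hence invertible. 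Your route is more self-contained (it does not need \aref{lemma:diag} or the contiguous-reordering step), is arguably more elementary, and it exhibits the change-of-basis matrix $P$ concretely, so that $b_\lambda = \sum_{A : w(A,A)=\lambda} (\mathbf{1}^T P)_A (P^{-1}\mathbf{1})_A$ is manifest rather than mediated by spectral projectors; this is exactly the eigenmatrix strategy the paper later deploys in the proof of \aref{thm:m-by-n}. The trade-off is that the paper's argument isolates the conceptual point (the eigenvalue map is injective on chains) more cleanly, while yours front-loads the computation. Both are valid; yours is closer to a constructive spectral decomposition, theirs to a structural characterization.
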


The above theorem provides a partial answer to a question of Don Knuth \cite[Problem 10]{knuth:parades}. Knuth asks how many ``max-closed relations'' there are on a finite product of chains $\prod \chain{n_i}$. His max-closed relations are just subsemigroups of $\prod \chain{n_i}$ under join. It is straightforward to check that for a join-semilattice $P$, the ``remove $e$'' map is two-to-one from subsemigroups to submonoids, and hence
\[
  \# \SubSemi \prod \chain{n_i} = 2\#\SubMon \prod \chain{n_i}.
\]
Knuth \cite{knuth:parades} details how $\SubSemi(\chain m\times \chain n)$ is enumerated by the \emph{poly-Bernoulli numbers} defined by Kaneko \cite{kaneko}; see \autoref{subsec:pB}. \autoref{cor:fmla} provides an inductive method for attacking this problem, but remains too unwieldy for Knuth's stated goal of determining $\#\SubSemi(\chain{8}^3)$. Indeed, we would first need to diagonalize the matrix $W(\chain{8}^2)$ which is $k\times k$ with
\[
  k = \frac 12B_{9,9} = 22\,111\,390\,122\,811;
\]
given that storing such a matrix would require around 2 million zettabytes, this is simply too large for practical computation.

The connection between commutative idempotent monoids and join-semilattices provides additional motivation for our work. Indeed, we undertook this project not with monoids in mind, but rather with the goal of understanding saturated transfer systems on lattices. These are combinatorial structures on subgroup lattices that are important in equivariant infinite loop space theory; see \autoref{sec:matchstick} and \cite{hmoo,Rubin}. In this context, $M=\Sub G$ (for $G$ a finite Abelian group) equipped with the join operation. It is a consequence of \cite{mrc-ormsby} that saturated transfer systems on $\Sub G$ are in bijective correspondence with $\SubMon(\Sub G)$.

For $G = C_N$ cyclic of order $N = p_1^{n_1}\cdots p_r^{n_r}$ with $p_i$ distinct primes and $n_i$ natural numbers,
\[
  \Sub C_N \cong \chain{n_1}\times\cdots\times \chain{n_r}.
\]
As such, the problem of enumerating saturated transfer systems on $\Sub C_N$ is equivalent to Knuth's problem on max-closed relations on $\prod \chain{n_i}$, and \autoref{cor:fmla} again provides traction.

\begin{remark}
Let $\CMon$ denote the category of commutative monoids and monoid homomorphisms (see \autoref{sec:submonoids} for definitions), and write $\CMon/\chain n$ for the slice category of commutative monoids equipped with a homomorphism to $\chain n$. We can view objects $p\colon N\to \chain n$ of $\CMon/\chain n$ as strongly multiplicative length $n+1$ filtrations by ideals. Here an ideal $I$ is a subset $I\subseteq N$ satisfying $IN=I$, and the filtration by ideals takes the form
\[
  I_n\subseteq I_{n-1}\subseteq \cdots \subseteq I_{1}\subseteq I_0=N
\]
where $I_k = p^{-1}[k,n]$. 
In order to reconstruct a monoid homomorphism $p\colon N\to\chain n$ from such a chain of ideals, it must also satisfy the \emph{strongly multiplicative} criterion
\[
  I_kI_\ell\subseteq I_{k\vee \ell}\setminus I_{(k\vee \ell)+1}
\]
where $k\vee \ell = \max\{k,\ell\}$.

Observe that $M\times\chain n$ is naturally an object of $\CMon/\chain n$ via projection. In fact, the assignment $M\mapsto M\times\chain n$ is left adjoint to the forgetful functor $\CMon/\chain n\to \CMon$, and thus we may view $M\times\chain n$ as the free commutative monoid equipped with a strongly multiplicative length $n+1$ filtration by ideals. This provides a categorical justification for studying $M\times\chain n$ and its submonoids beyond our applications to max-closed relations and saturated transfer systems.
\end{remark}

\subsection*{Overview}
We now briefly outline the rest of the paper. In \autoref{sec:submonoids}, we review some basics on monoids and submonoids, introduce the (weighted, directed) submonoid graph of a finite commutative monoid $M$, and use this to prove \autoref{thm:walks}, our most general theorem enumerating submonoids of $M\times \chain n$. We specialize this theorem to $M$ a commutative Abelian group in \autoref{thm:group}, and conclude the section with some observations on generating functions.

In \autoref{sec:semilattices}, we suppose that $M$ is finite, commutative, and idempotent, and use these assumptions to derive a particularly special form for $\#\SubMon(M\times\chain n)$ in this case; see \autoref{cor:fmla}. We use this to demonstrate that $\#\SubMon(M\times\chain n)$ is a constant-recursive sequence in this setting, and then make the constants in \autoref{cor:fmla} explicit for a selection of finite commutative idempotent monoids. We conclude \autoref{sec:semilattices} by making the connection between our work, \cite{knuth:parades}, and poly-Bernoulli numbers explicit, including a conjecture regarding multivariable exponential generating functions.

In \autoref{sec:matchstick}, we demonstrate how our techniques apply to the enumeration of saturated transfer systems on finite lattices. This makes our enumerations applicable in the theory of $N_\infty$-operads and equivariant ring spectra \cite{BlumbergHill,Rubin}.

\subsection*{Acknowledgments}
This work was completed by the summer 2024 edition of Reed College's Collaborative Mathematics Research Group, supported by NSF award DMS--2204365.

\section{Transition matrices for submonoids}\label{sec:submonoids}
Throughout this section, let $M = (M,\monop)$ be a finite commutative monoid with identity element $e$; in other words, $M$ is a finite set equipped with binary operation $\monop$ satisfying $x\monop y=y\monop x$, $x\monop (y\monop z) = (x\monop y)\monop z$, and $x\monop e=x = e\monop x$ for all $x,y,z\in M$. A homomorphism between monoids $M,N$ is a function $f\colon M\to N$ satisfying $f(x\monop y) = f(x)\monop f(y)$ and $f(e)=e$. We will often implicitly work in the category of monoids and monoid homomorphisms, adding adjectives like finite, commutative, or idempotent as necessary.

\begin{example}\label{ex:monoids}
We briefly recall some important examples and fix notation:
\begin{enumerate}[(a)]
    \item Any commutative group is a commutative monoid.
    \item Suppose $(P,\le)$ is a finite \emph{join-semilattice}, that is, a partially ordered set admitting least upper bounds (joins) $\vee$. Then $(P,\vee)$ is a commutative \emph{idempotent} ($x\vee x = x$) monoid. These structures are in fact equivalent: if $(M,\monop)$ is a commutative idempotent monoid, then $x\le y\iff x\monop y=y$ makes $(M,\le)$ a join-semilattice.
    \item As an important subexample of (b), for a natural number $n$, the finite chain $[n] = \{0<1<\cdots<n\}$ is a commutative (idempotent) monoid under $\ell\vee m=\max\{\ell,m\}$.
    \item Given two commutative monoids $M,N$, their Cartesian product $M\times N$ is a commutative monoid under $(x,y)\monop (x',y') = (x\monop x',y\monop y')$.
    \item For a set $S$, the power set of $S$ is a commutative (idempotent) monoid with operation given by set union. It is isomorphic to $[1]^{\#S}$.
\end{enumerate}
\end{example}

\begin{defn}\label{defn:submonoid}
    Let $(M,\monop)$ be a monoid. A \emph{submonoid} $A$ of $M$ is a subset of $M$ that contains $e$ and is closed under the operation $\monop$. We denote the collection of submonoids of $M$ by 
    $\SubMon M$ and write $A\le M$ when $A$ is a submonoid of $M$.
\end{defn}

\subsection{The submonoid graph}
We now construct a weighted directed graph $G(M)$ associated with a commutative monoid $M$. Its vertices will consist of submonoids of $M$, and we will ultimately show that the total weight of length $n$ walks in $G(M)$ counts the number of submonoids of $M\times [n]$.

\begin{defn}\label{defn:ideal}
    An \emph{ideal} of a commutative monoid $M$ is a subset $I\subseteq M$ with the property that $I\monop M = I$.
\end{defn}

\begin{remark}
    If $M$ is also idempotent, then ideals of $M$ are the same as upwards closed subsets (\textit{upsets}) of $(M,\le)$.
\end{remark}

\begin{defn}\label{defn:submon_graph}
Suppose $M$ is a finite commutative monoid. The \emph{submonoid graph} of $M$ is the weighted directed graph $G(M)$ with vertices the submonoids of $M$ and weight function $w\colon \SubMon(M) \times \SubMon(M)\to \NN$ given by
\[
  w(A,B) := \#\{I\subseteq A\mid I\text{ an ideal of $A$ and }I\cup B = A\}.
\]
After (sometimes implicitly) choosing a linear ordering of $\SubMon M$, we write $W = W(M)$ for the associated adjacency matrix in which
\[
  W_{A,B} = w(A,B).
\]
\end{defn}

\begin{example}
  Let $M = \chain{1}\times\chain{1}$. Let $A = \{(0, 0), (0, 1), (1, 1)\}$ and
  $B = \{(0, 0), (1, 1)\}$. We draw $B$ in black and $A\setminus B$ in red:
  \[
  \begin{tikzpicture}[
      empty/.style={circle, draw=black, inner sep=1.5pt, outer sep=0pt},
      dot/.style={empty, fill=black}
      ]
      \foreach \x in {0,.5}
        \foreach \y in {0,.5}
        {
          \node[empty] at (\x, \y) {};
        }

        \node[dot] at (0, 0) {};
        \node[dot] at (.5, .5) {};
        \node[dot, draw=red, fill=red] at (0, .5) {};
    \end{tikzpicture}.
  \]
  In order to count towards $w(M,N)$, an ideal $I$ must contain all of $A\setminus B = \{(0, 1)\}$, and because of the ideal condition, it must contain $(1,1)$ as well, so our only choice is to either include $(0, 0)$ or not. Thus $w(M,N) =
  2$. The entire submonoid graph of $M$ and its adjacency matrix are displayed in \autoref{fig:submon_graph}. 
\end{example}
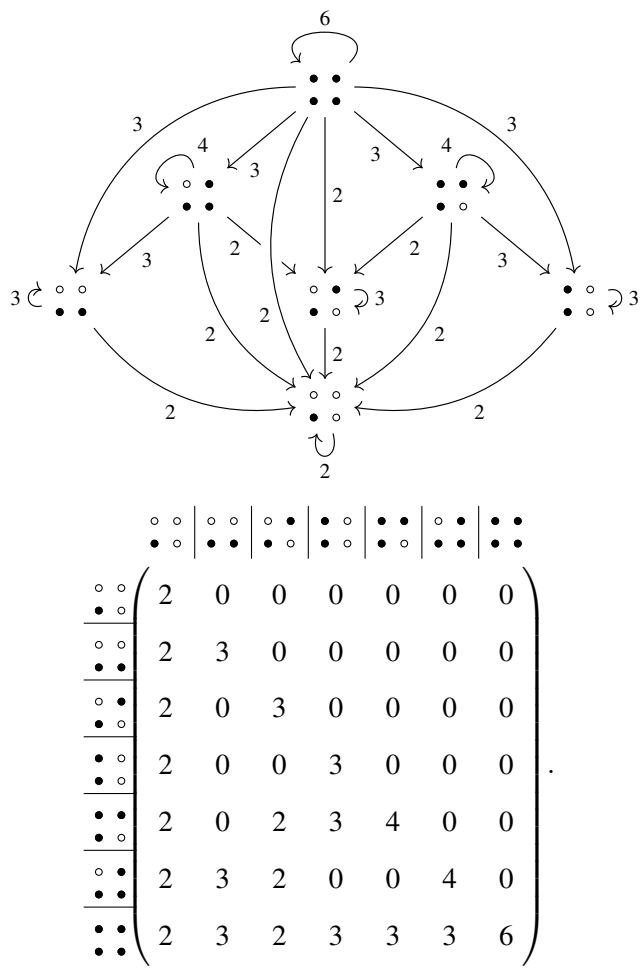
\begin{figure}
  \begin{tikzcd}
    &&\subgrid{1}{1}{1}{1}[.3]
    \ar[loop, looseness=3, "6"']
    \ar[dl, "3",pos=.78]
    \ar[dr, "3"']
    \ar[ddll, bend right=40, "3"']
    \ar[ddrr, bend left=40, "3"]
    \ar[dd, "2"]
    \\
    &\subgrid{1}{0}{1}{1}[.3]
    \ar[loop, out=100, in=170, looseness=3, "4"', pos=.1]
    \ar[dr, "2"', pos=.3]
    \ar[dl, "3"]
    \ar[ddr, bend right, "2"']
    &&\subgrid{1}{1}{0}{1}[.3]
    \ar[loop, out=80, in=10, looseness=3, "4", pos=.1]
    \ar[dl, "2", pos=.3]
    \ar[dr, "3"']
    \ar[ddl, bend left, "2"]
    \\
    \subgrid{1}{0}{1}{0}[.3]
    \ar[loop left, looseness=3, "3"]
    \ar[drr, bend right, "2"']
       &&
    \subgrid{1}{0}{0}{1}[.3]
    \ar[loop right, looseness=3, "3"]
    \ar[d, "2"]
       &&
    \subgrid{1}{1}{0}{0}[.3]
    \ar[loop right, looseness=3, "3"]
    \ar[dll, bend left, "2"]
       \\ &&
    \subgrid{1}{0}{0}{0}[.3]
      \ar[loop below, looseness=5, "2"] 
    \ar[from=1-3, bend right, crossing over, "2"', pos=.7]
  \end{tikzcd}
\[
    \NiceMatrixOptions{cell-space-limits = 7pt}
  \begin{pNiceArray}{ccccccc}[first-row,first-col,name=M]
& \subgrid{1}{0}{0}{0}[.3] & \subgrid{1}{0}{1}{0}[.3] & \subgrid{1}{0}{0}{1}[.3]
& \subgrid{1}{1}{0}{0}[.3] & \subgrid{1}{1}{0}{1}[.3] & \subgrid{1}{0}{1}{1}[.3]
& \subgrid{1}{1}{1}{1}[.3] \\
\subgrid{1}{0}{0}{0}[.3] & 2 & 0 & 0 & 0 & 0 & 0 & 0 \\ 
\subgrid{1}{0}{1}{0}[.3] & 2 & 3 & 0 & 0 & 0 & 0 & 0 \\ 
\subgrid{1}{0}{0}{1}[.3] & 2 & 0 & 3 & 0 & 0 & 0 & 0 \\ 
\subgrid{1}{1}{0}{0}[.3] & 2 & 0 & 0 & 3 & 0 & 0 & 0 \\ 
\subgrid{1}{1}{0}{1}[.3] & 2 & 0 & 2 & 3 & 4 & 0 & 0 \\ 
\subgrid{1}{0}{1}{1}[.3] & 2 & 3 & 2 & 0 & 0 & 4 & 0 \\ 
\subgrid{1}{1}{1}{1}[.3] & 2 & 3 & 2 & 3 & 3 & 3 & 6
 
  \end{pNiceArray}.
\]
\begin{tikzpicture}[remember picture,overlay]
  \def\tick{10pt} 
  \def\dx{5pt}   
  \def\dy{-3.1pt}

  \foreach \c in {1,...,6}{
    \draw[line width=0.4pt]
      ([xshift=\dx,yshift=-\tick]M-0-\c.east) --
      ([xshift=\dx,yshift= \tick]M-0-\c.east);
  }

  \foreach \r in {1,...,6}{
    \draw[line width=0.4pt]
      ([yshift=\dy,xshift=-\tick]M-\r-0.south) --
      ([yshift=\dy,xshift= \tick]M-\r-0.south);
  }


\end{tikzpicture}

\caption{The submonoid graph $G(\chain 1\times \chain 1)$ and the associated adjancency matrix $W(\chain 1\times\chain 1)$. It is an instructive exercise to check that the indicated weights are correct; the omission of an edge indicates weight $0$.}\label{fig:submon_graph}
\end{figure}

We will show that length $n$ walks in $G(M)$ enumerate submonoids of $M\times \chain n$. Perhaps surprisingly, they do so in terms of the projection of a submonoid onto $M\times \chain 0\cong M$. Given $n\in \NN$, let $\pi = \pi_n\colon M\times\chain n\to M$ denote the projection map $\pi(x,k) = x$. The map $\pi$ is a homomorphism of monoids and thus takes submonoids to submonoids.

\begin{lemma}\label{lem:upclosed-recurrence}
Let $M$ be a finite commutative monoid. For $A$ a submonoid of $M$, let
\[
  m_A(n) := \#\{R\le M\times\chain n\mid \pi_nR=A\}.
\]
Then
\[
  m_A(0) = 1\qquad\text{and}\qquad
  m_A(n+1) = \sum_{B\leq M} w(A,B)m_{B}(n).
\]
\end{lemma}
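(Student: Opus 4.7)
\medskip

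\noindent\textbf{Proof plan.} The base case is immediate: $M\times\chain 0\cong M$ canonically, so the unique submonoid $R\le M\times\chain 0$ with $\pi_0 R = A$ is $A\times\{0\}$, giving $m_A(0)=1$.

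For the inductive step, the plan is to set up a bijection
\[
  \{R\le M\times\chain{n+1}\mid \pi_{n+1}R=A\}\;\longleftrightarrow\;\coprod_{B\le M}\{(R',I)\mid \pi_nR'=B,\ I\cup B=A,\ I\text{ an ideal of }A\}
\]
that sends $R$ to the pair $(R', I)$ with $R' := R\cap(M\times\chain n)$ and $I := \{x\in M\mid (x,n+1)\in R\}$. The recurrence then follows immediately by counting the right-hand side as $\sum_B w(A,B)m_B(n)$ (and noting that the terms with $B\not\le A$ vanish, since $I\cup B=A$ forces $B\subseteq A$).

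For the forward map, first observe that $M\times\chain n$ is a submonoid of $M\times \chain{n+1}$, so $R'$ is automatically a submonoid. To see that $I$ is an ideal of $A$, note that for any $x\in I$ and $y\in A$, writing $y\in R_k := \{z\mid (z,k)\in R\}$ for some $k\le n+1$, the product $(x,n+1)\cdot(y,k)=(xy,n+1)$ lies in $R$, so $xy\in I$; thus $IA\subseteq I$, and the reverse inclusion uses $e\in A$. The condition $\pi_nR'\cup I = A$ is just the partition $A=\bigcup_{k=0}^{n+1} R_k$ split into $k\le n$ versus $k=n+1$.

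For the inverse map, given $(R', I)$ with $\pi_nR' = B$, $I$ an ideal of $A$, and $B\cup I = A$, define
\[
  R := R'\cup(I\times\{n+1\}).
\]
The main verification is closure of $R$ under multiplication, which splits into three cases by location of the factors. Products of two elements of $R'$ stay in $R'$; a product $(x,k)\cdot(y,n+1)$ with $(x,k)\in R'$ (so $x\in B\subseteq A$, $k\le n$) and $y\in I$ gives $(xy,n+1)$ with $xy\in IA\subseteq I$ by the ideal condition; and a product within $I\times\{n+1\}$ lies in $I\times\{n+1\}$ for the same reason (since $I\subseteq A$). Containment of the identity and the identity $\pi_{n+1}R=A$ are routine. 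That the two constructions are mutually inverse is clear from the definitions, since $R\cap(M\times\chain n)$ and $\{x\mid (x,n+1)\in R\}$ are disjoint and cover $R$.

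The only step requiring any care is the closure check for $R$ in the inverse construction, where one must use precisely the ideal hypothesis $IA=I$ in the mixed-level case; everything else is bookkeeping.
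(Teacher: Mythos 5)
Your proof is correct and follows exactly the same strategy as the paper: decompose each $R\le M\times\chain{n+1}$ into $R' = R\cap(M\times\chain n)$ and the ideal $I = \{x\mid (x,n+1)\in R\}$ of $A$, check that this is a bijection onto pairs $(R',I)$ with $I\cup\pi_nR'=A$, then sum over $B = \pi_nR'$. The paper leaves the verifications as ``straightforward to check''; you have filled them in correctly, in particular the crucial use of $IA=I$ for closure of $R'\cup(I\times\{n+1\})$ under the operation.
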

\begin{proof}
The proof proceeds in the following steps:
\begin{enumerate}[1.]
\item Check the base case for $M\times\chain 0\cong M$.
\item From each submonoid $R\le M\times\chain{n+1}$ such that $\pi_{n+1}R=A$, construct a submonoid $R'$ of $M\times \chain n$ together with an ideal $I$ of $A$ such that $I\cup \pi_n R' = A$.
\item Construct an inverse assignment taking the data of $R'\le M\times\chain n$ and an ideal $I$ of $A$ such that $I\cup \pi_n R' = A$ to a submonoid of $M\times\chain{n+1}$ that projects to $A$.
\item Observe that by partitioning on $B = \pi(R')$, the bijection from the previous two steps gives the inductive formula.
\end{enumerate}

Step 1 is immediate; Step 4 follows from Steps 2 and 3.

For Step 2, let $R\le M\times\chain{n+1}$ be a submonoid such that $\pi_{n+1}R=A$. Define $R' = R\cap (M\times \chain n)$ and $I = \pi_{n+1}(R\cap (M\times\{n+1\}))$. It is straightforward to check that $I$ is an ideal of $A$ such that $I\cup \pi_n R' = A$.

For Step 3, let $R'$ be a submonoid of $M\times \chain n$ and $I\subseteq A$ be an ideal of $A$ with $I\cup \pi_n R' = A$. We set $R := R'\cup (I\times \{n+1\})$. It is straightforward to check that $R$ is a submonoid of $M\times\chain{n+1}$ with $\pi_{n+1}R = A$.

Additionally, the assignments of Steps 2 and 3 are inverse by construction, concluding the proof.
\end{proof}

The previous lemma establishes the following theorem.

\begin{theorem}\label{thm:walks}
Let $M$ be a finite commutative monoid, suppose $A,B\in \SubMon(M)$, and let $W = W(M)$ be the adjacency matrix of the submonoid graph $G(M)$ with respect to some linear ordering of $\SubMon(M)$. Then the number of submonoids $R$ of $M\times\chain n$ with $\pi_n R = A$ and $\pi_{n-1}(R\cap (M\times\chain{n-1})) = B$ is the $(A,B)$-entry of $W^n$, and the total number of submonoids of $M\times\chain n$ is
\[
  \#\SubMon(M\times\chain n) = \sum_{A,B\le P}(W^n)_{A,B}.
\]\hfill\qedsymbol
\end{theorem}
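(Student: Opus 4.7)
The plan is to derive the theorem directly from \autoref{lem:upclosed-recurrence} by iteration. Viewing $m(n) = (m_A(n))_{A\in\SubMon M}$ as a column vector indexed by submonoids, the lemma reads $m(n+1) = W\,m(n)$ with initial condition $m(0) = \mathbf{1}$ (the all-ones vector). A routine induction on $n$ then gives
\[
  m_A(n) = \sum_{B\in\SubMon M} (W^n)_{A,B}.
\]
Because $\SubMon(M\times\chain n) = \bigsqcup_{A\in\SubMon M}\{R\mid \pi_nR = A\}$, summing over $A$ yields the closed-form total $\#\SubMon(M\times\chain n) = \sum_{A,B}(W^n)_{A,B}$.

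For the refined identification of individual entries $(W^n)_{A,B}$ as counting submonoids with prescribed endpoints, I would iterate the Step~2/Step~3 bijection from the proof of \autoref{lem:upclosed-recurrence}. Expanding $(W^n)_{A,B} = \sum_{D_1,\ldots,D_{n-1}} W_{A,D_1}W_{D_1,D_2}\cdots W_{D_{n-1},B}$ rewrites each entry as a weighted count of length-$n$ walks $A = D_0\to D_1\to\cdots\to D_n = B$ in $G(M)$ together with, at each step $k$, an ideal $I_k$ of $D_{k-1}$ satisfying $I_k\cup D_k = D_{k-1}$. On the other hand, iteratively applying Step~2 to $R\le M\times\chain n$ with $\pi_nR = A$ strips off successive top levels and produces exactly such a walk: $D_k$ is the $M$-projection of the restriction of $R$ to levels $0,\ldots,n-k$, while $I_k$ records the $M$-projection of the $k$\textsuperscript{th} stripped layer. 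The inverse is the iterated reassembly from Step~3, glueing back the ideal layers one at a time on top of the terminal submonoid $D_n\subseteq M\times\{0\}$. This pairs submonoids $R$ with walk-plus-ideal data, and reading off the $(A,B)$-entry count proves the refined claim.

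The principal obstacle---a minor one---is a careful bookkeeping induction verifying that the repeated one-step bijection of \autoref{lem:upclosed-recurrence} really does compose into a global bijection between submonoids of $M\times\chain n$ and length-$n$ walks in $G(M)$ decorated by ideals, rather than merely a one-step expansion. This reduces to the observation that Step~2's restriction $R\mapsto R\cap(M\times\chain{n-1})$ is compatible with any further restrictions to lower sub-intervals, so the stripping procedure can be applied inductively and inverted layer by layer via Step~3.
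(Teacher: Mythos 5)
Your proof of the total-count formula is correct and matches the paper exactly: the paper's own proof of \autoref{thm:walks} is the single sentence ``The previous lemma establishes the following theorem,'' so your explicit unpacking of \autoref{lem:upclosed-recurrence} into the matrix recursion $m(n+1) = Wm(n)$, $m(0)=\mathbf 1$, hence $m(n) = W^n\mathbf 1$, is exactly the intended argument. Your iterated Step~2/Step~3 bijection for the entry-wise interpretation of $(W^n)_{A,B}$ is also sound.

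However, you should flag a discrepancy with the statement you were asked to prove. Your stripping procedure correctly identifies the endpoint $D_n = B$ as $\pi_0\bigl(R\cap(M\times\{0\})\bigr)$ --- the projection of the lowest layer --- whereas the theorem as written asserts that $(W^n)_{A,B}$ counts submonoids with $\pi_{n-1}\bigl(R\cap(M\times\chain{n-1})\bigr) = B$, which is $D_1$, the projection after only one stripping step. These agree for $n = 1$ but differ for $n\ge 2$; for example, with $M = \chain 1$ one has $(W^2)_{M,\{0\}} = 10$, which matches a direct count of submonoids $R\le\chain 1\times\chain 2$ with $\pi_2R = M$ and $\pi_0(R\cap(M\times\{0\})) = \{0\}$, but there are only four with $\pi_1(R\cap(M\times\chain 1)) = \{0\}$. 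The theorem statement therefore contains a typo, and the correct condition is the one your argument actually establishes. Your write-up should explicitly note and correct this rather than silently proving the amended statement; as it stands, a careful reader would observe that the claim of the theorem has not literally been proved.
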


In the language of \cite[Chapter 4]{EC}, the above theorem establishes that $W$ is the \emph{transfer matrix} for submonoids of $M\times\chain n$. We can use it to establish asymptotics for $\#\SubMon(M\times \chain n)$ in terms of the number of ideals of submonoids of $M$. To do so, we will need to make some observations regarding the shape of $W(M)$.

To fix terminology, recall that for a partially ordered set $P$, a \emph{linear extension} of $P$ is a total ordering of $P$ that respects the partial order on $P$.

\begin{lemma}\label{lemma:upper_triangular}
Suppose $M$ is a finite commutative monoid and fix a linear extension of $\SubMon(M)$ under inclusion. When written with respect to this ordering, $W(M)$ is lower triangular.
\end{lemma}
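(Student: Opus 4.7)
The plan is to unpack the definition of $w(A,B)$ and exploit the fact that $I \cup B = A$ forces a containment $B \subseteq A$. Specifically, I would first note that if $I$ is any subset of $A$ with $I \cup B = A$, then in particular $B \subseteq I \cup B = A$, so the condition $B \subseteq A$ is a necessary prerequisite for the set $\{I \subseteq A \mid I \text{ an ideal of } A,\ I \cup B = A\}$ to be nonempty. Consequently, $w(A,B) = 0$ whenever $B \not\subseteq A$.

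Next I would translate this into a statement about the matrix $W(M)$. Fix a linear extension $\preceq$ of the inclusion order on $\SubMon(M)$, and suppose $A \prec B$ in the total order (i.e., the entry $W_{A,B}$ lies strictly above the diagonal). Because $\preceq$ extends the partial order of inclusion, having $B \subsetneq A$ would force $B \prec A$, contradicting $A \prec B$; and $B = A$ is excluded by strictness. Hence $B \not\subseteq A$, and by the previous paragraph $W_{A,B} = w(A,B) = 0$. This shows every entry strictly above the diagonal vanishes, which is precisely the definition of lower triangularity.

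The argument is essentially a one-line unpacking of definitions, so I do not anticipate any genuine obstacle; the only thing to be careful about is matching the row/column convention for ``lower triangular'' against the direction of the linear extension (and ensuring one distinguishes the partial order of inclusion from its total refinement). In particular, no properties of $M$ beyond finiteness are used here, and the same proof would work verbatim for any weight function supported on pairs $(A,B)$ with $B \subseteq A$.
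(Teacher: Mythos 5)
Your proof is correct and matches the paper's argument, which is precisely the observation that $B \not\subseteq A$ forces $w(A,B)=0$ because $I \cup B = A$ already implies $B \subseteq A$. You simply spell out the translation into matrix-entry language that the paper leaves implicit.
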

\begin{proof}
This is simply the observation that $B \not\subseteq A$ implies that $w(A,B) = 0$.
\end{proof}

To state the following theorem on the asymptotic growth of $\#\SubMon(M\times\chain n)$, we establish the following notation: let $i(M)$ denote the maximal number of ideals of a submonoid of $M$; let $r(M)$ denote the number of submonoids $A\le M$ for which $w(A,A) = i(M)$. (We warn the reader that at this level of generality, $i(M)$ may not be equal to $w(M,M)$, but it is equal to the maximal value of $w(A,A)$ for $A\le M$. When $M$ is idempotent, \autoref{lemma:order_rev} implies that $i(M)=w(M,M)$ and $r(M)=1$.)

\begin{theorem}\label{thm:asymptotics}
If $M$ is a finite commutative monoid, then asymptotically in $n$,
\[
  \#\SubMon(M\times \chain n) = \Theta(n^{k}i(M)^n).
\] 
for some $0\le k\le r(M)-1$. If $r(M)=1$, then
\[
  \#\SubMon(M\times \chain n) \sim C\cdot i(M)^n
\]
for $C=C(M)$ a positive constant.
\end{theorem}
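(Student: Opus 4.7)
The plan is to exploit the lower triangular structure of $W(M)$ established in \autoref{lemma:upper_triangular} to read off its eigenvalues from the diagonal, and then invoke the Jordan canonical form to control the asymptotics of $W^n$ entry-wise.

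First I would observe that because $W$ is lower triangular, its eigenvalues (with algebraic multiplicity) are exactly the diagonal entries $W_{A,A}=w(A,A)$. Since the condition $I\cup A = A$ is automatic once $I\subseteq A$, the quantity $w(A,A)$ simply counts the ideals of $A$. Hence the spectrum of $W$ consists of non-negative integers; the largest is $i(M)$ by definition, and it occurs with algebraic multiplicity exactly $r(M)$. In particular $i(M)$ is the unique eigenvalue of largest modulus.

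Next, write $W=PJP^{-1}$ in Jordan normal form over $\CC$. The standard formula for matrix powers then gives
\[
  (W^n)_{A,B} \;=\; \sum_{\lambda} q_{\lambda,A,B}(n)\,\lambda^n,
\]
where the outer sum is over distinct eigenvalues and each $q_{\lambda,A,B}$ is a polynomial in $n$ of degree strictly less than the largest Jordan block size for $\lambda$. Since the largest block at $\lambda=i(M)$ has size at most its algebraic multiplicity $r(M)$, we have $\deg q_{i(M),A,B}\le r(M)-1$. Summing over $A,B$ and splitting off the $\lambda=i(M)$ contribution,
\[
  \#\SubMon(M\times\chain n) \;=\; P(n)\,i(M)^n + E(n),
\]
where $\deg P\le r(M)-1$ and $E(n)=O(\mu^n)$ with $\mu = \max\{w(A,A):w(A,A)<i(M)\}<i(M)$ (with any polynomial factors on sub-dominant eigenvalues absorbed by a small inflation of $\mu$). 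This yields the upper bound $\#\SubMon(M\times\chain n)=O(n^{r(M)-1}i(M)^n)$.

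For the matching lower bound I would use positivity: by triangularity, $(W^n)_{A,A}=w(A,A)^n$, so
\[
  \#\SubMon(M\times\chain n) \;\ge\; \sum_{A\le M}(W^n)_{A,A} \;\ge\; r(M)\,i(M)^n.
\]
Combining the two bounds gives $\#\SubMon(M\times\chain n)=\Theta(n^k\,i(M)^n)$ for some integer $k\in\{0,1,\ldots,r(M)-1\}$. When $r(M)=1$, the eigenvalue $i(M)$ is simple, so $P$ reduces to a constant $C$; the lower bound forces $C\ge 1>0$, giving the stronger conclusion $\#\SubMon(M\times\chain n)\sim C\,i(M)^n$. The only subtlety (not an obstacle) is that cancellation among the coefficients of $i(M)^n$ across pairs $(A,B)$ could make $\deg P$ strictly less than $r(M)-1$, which is precisely why the theorem allows any $k$ in the stated range; the positivity lower bound is what rules out the opposite degeneracy of $i(M)^n$ being fully suppressed.
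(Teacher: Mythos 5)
Your proposal is correct and follows essentially the same route as the paper, which likewise deduces the result from the Jordan decomposition of the lower triangular, nonnegative matrix $W(M)$ whose largest eigenvalue is $i(M)$ with multiplicity $r(M)$; your use of the diagonal entries $(W^n)_{A,A}=w(A,A)^n$ to force positivity of the leading coefficient correctly fills in the detail the paper leaves to the reader. The only content in the paper's sketch beyond yours is the sharper combinatorial identification of $k$ as the length of the longest path among the submonoids $A$ with $w(A,A)=i(M)$, which is not needed for the statement as given.
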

\begin{proof}
We leave the details to the reader, but note that this follows from a standard analysis of the Jordan decomposition of the lower triangular matrix $W(M)$ (\autoref{lemma:upper_triangular}) with nonnegative entries; see for instance~\cite[Appendix D.2]{leveque2007}. The crucial observations are that $i(M)$ is the largest eigenvalue of $W(M)$ and $k$ is the length of the longest path in the induced subgraph (not including self-loops) on $A\le M$ such that $w(A,A)=i(M)$.
\end{proof}

We conclude this section by providing an explicit formula for $\#\SubMon(G\times [n])$ when $G$ is a finite Abelian group.

\begin{theorem}\label{thm:group}
Suppose $M=G$ is a finite Abelian group and $n$ is a natural number. Then
\[
  \#\SubMon(G\times \chain n) = 2^n\cdot\sum_{m\ge 0}c_m2^{-m}\binom{n}{m} \sim \frac{1}{h!} c_h n^h 2^{n-h},
\]
where $c_m$ is the number of length $m$ chains in the subgroup lattice of $G$ and $h$ is the \emph{height} of $\Sub(G)$, \emph{i.e.}, the length of the longest chain in the subgroup lattice.
\end{theorem}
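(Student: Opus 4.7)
The plan is to exploit the group structure to reduce $W(G)$ to an especially rigid form, and then apply the binomial theorem.

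First, I would observe that in any finite commutative group $A$, a nonempty ideal $I$ must equal $A$: for any $x\in I$, one has $A = xA \subseteq IA = I$. Hence the only ideals of $A$ are $\varnothing$ and $A$ itself. (Relatedly, every submonoid of the finite group $G$ is automatically a subgroup, so $\SubMon G = \Sub G$.) Translating through \autoref{defn:submon_graph}, for $A,B\le G$ this gives $w(A,A)=2$ (both $I=\varnothing$ and $I=A$ satisfy $I\cup A=A$), $w(A,B)=1$ when $B\subsetneq A$ (only $I=A$ works since $A\setminus B\ne\varnothing$), and $w(A,B)=0$ otherwise.

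Next, fixing any linear extension of $\Sub G$ of size $d$, I would rewrite the submonoid matrix as $W(G) = 2I_d + N$, where $N$ is the strictly lower triangular $\{0,1\}$-matrix with $N_{A,B}=1$ iff $B<A$. Since $N$ commutes with the scalar matrix $2I_d$, the binomial theorem yields
\[
  W^n = \sum_{k\ge 0}\binom{n}{k}2^{n-k}N^k.
\]
The entry $(N^k)_{A,B}$ counts strict descending chains $A=A_0>A_1>\cdots>A_k=B$ in $\Sub G$, so summing over all $A,B\le G$ gives the total number $c_k$ of length-$k$ chains in $\Sub G$. Combining this with \autoref{thm:walks},
\[
  \#\SubMon(G\times \chain n) = \sum_{A,B\le G}(W^n)_{A,B} = \sum_{k\ge 0}\binom{n}{k}2^{n-k}c_k = 2^n\sum_{m\ge 0}c_m 2^{-m}\binom{n}{m},
\]
which is the first claimed identity.

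For the asymptotic, only finitely many $c_m$ are nonzero (namely for $0\le m\le h$), and for each fixed $m$ one has $\binom{n}{m}\sim n^m/m!$ as $n\to\infty$. The dominant contribution comes from $m=h$, producing the asymptotic $\frac{1}{h!}c_h n^h 2^{n-h}$. The conceptual heart of the argument is the one-line observation that the ideals of a finite group are trivial; everything downstream of that is routine linear algebra and the standard asymptotic of binomial coefficients, so I do not anticipate a serious obstacle.
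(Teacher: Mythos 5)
Your proposal is correct and follows essentially the same route as the paper: identify the ideals of a finite Abelian group as $\varnothing$ and the whole group, deduce the form of $W(G)=2I+N$, apply the binomial theorem, and interpret powers of the strictly lower triangular part as chain counts in $\Sub(G)$. The only differences are cosmetic --- you spell out the one-line justification that nonempty ideals of a group are trivial and the elementary binomial-coefficient asymptotic, both of which the paper leaves implicit.
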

\begin{proof}
First note that the asymptotic formula follows immediately from the explicit formula.

We now determine the entries of $W(G)$. Observe that every submonoid of $G$ is in fact a subgroup of $G$ (since $G$ is a \textit{finite} group). Further note that every finite Abelian group $A$ has exactly two ideals, itself and the empty set. Based on these facts, we can deduce that for $A,B\in \SubMon(G)$,
\[
  w(A,B) =
  \begin{cases}
      2 &\text{if }A=B,\\
      1 &\text{if }B<A,\\
      0 &\text{otherwise}.
  \end{cases}
\]

Let $k = \#\Sub(G)$ be the number of subgroups of $G$. Then $W = W(G)$ is a $k\times k$ lower triangular matrix with $2$s on the diagonal and lower triangle entries either $0$ or $1$ with $W_{A,B}=1$ if and only if $B\subsetneq A$. This allows us to write
\[
  W = 2I_k+L
\]
for $L$ a strictly-lower triangular matrix of $0$s and $1$s with $L_{A,B}=1$ if and only if $B \subsetneq A$.

By the binomial theorem, which can be used since $2I_k$ and $L$ commute, 
\[
  W^n = \sum_{m\ge 0}\binom nm2^{n-m}L^m = 2^n\cdot\sum_{m\ge 0}2^{-m}\binom nm L^m.
\]
Let $\mathbf 1$ be the $k\times 1$ column vector of all $1$s. Then by \autoref{thm:walks},
\[
  \#\SubMon(G\times\chain n) = \mathbf 1^\top W^n\mathbf 1 = 2^n\cdot\sum_{m\ge 0}2^{-m}\binom nm \mathbf 1^\top L^m\mathbf 1.
\]
Now observe that $L$ is the adjacency matrix for the directed graph on $\Sub(G)$ with a directed edge from $A$ to $B$ if and only if $B$ is a proper subgroup of $A$. By the usual interpretation of $L^m$ in terms of length $m$ walks, we see that $\mathbf 1^\top L^m\mathbf 1 = c_m$ the number of length $m$ chains in $\Sub(G)$. This demonstrates that
\[
  \#\SubMon(G\times\chain n) = 2^n\cdot\sum_{m\ge 0}c_m2^{-m}\binom nm
\]
as desired.
\end{proof}

\subsection{Ordinary generating function for $\SubMon(M\times\chain n)$}
Suppose $M$ is a finite commutative monoid and let $W = W(M)$. We now use the results of \cite[Chapter 4]{EC} to deduce some results about generating functions.

Let $F_{ij}(x)$ denote the ordinary generating of the $(i,j)$-entry of $W^n$:
\[
  F_{ij}(x) := \sum_{n\ge 0}(W^n)_{ij}x^n.
\]
By \cite[Theorem 4.7.2]{EC}, we have
\[
  F_{ij}(x) = \frac{(-1)^{i+j}\det(I-xW : j,i)}{\det(I-xW)}
\]
where $(B:j,i)$ denotes the matrix obtained by removing the $j$-th row and $i$-th column of $B$.

Now let $F(x)$ be the ordinary generating function for $\#\SubMon(M\times\chain n)$. By \autoref{thm:walks},
\[
  F(x) = \sum_{i,j}F_{i,j}(x)
\]
so we get the following result.
\begin{prop}\label{prop:ord_gen}
Suppose $M$ is a finite commutative monoid and
\[
  F(x) = \sum_{n\ge 0}\#\SubMon(M\times\chain n)x^n.
\]
Then
\[
  F(x) = \frac{\sum_{i,j}(-1)^{i+j}\det(I-xW:j,i)}{\prod_{A\le M}(1-w(A,A) x)},
\]
where $W = W(M)$ and $w(A,A)$ is the number of ideals of the submonoid $A\le M$.
\end{prop}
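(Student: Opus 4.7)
The plan is to assemble the formula from three ingredients already in hand: the matrix identity from \autoref{thm:walks}, the standard cofactor formula for $F_{ij}(x)$ recalled from \cite[Theorem 4.7.2]{EC}, and the lower triangularity of $W$ from \autoref{lemma:upper_triangular}.

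First, I would fix a linear extension of $\SubMon(M)$ so that $W = W(M)$ is lower triangular, and expand
\[
  F(x) = \sum_{n\ge 0}\#\SubMon(M\times\chain n)\,x^n = \sum_{n\ge 0}\sum_{i,j}(W^n)_{ij}\,x^n = \sum_{i,j}F_{ij}(x),
\]
using \autoref{thm:walks} for the second equality and interchanging the finite sum over $i,j\in\SubMon(M)$ with the sum over $n$ in the third. Then I would substitute the cited cofactor formula
\[
  F_{ij}(x) = \frac{(-1)^{i+j}\det(I-xW : j,i)}{\det(I-xW)},
\]
so that all $F_{ij}(x)$ share a common denominator $\det(I-xW)$ and can be combined into a single rational function with numerator $\sum_{i,j}(-1)^{i+j}\det(I-xW:j,i)$.

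It remains to identify the denominator with $\prod_{A\le M}(1 - w(A,A)x)$. Here I invoke \autoref{lemma:upper_triangular}: because $W$ is lower triangular in the chosen ordering, so is $I - xW$, and hence its determinant equals the product of the diagonal entries $1 - W_{A,A}x$. Since $W_{A,A} = w(A,A)$ is by definition the number of ideals of the submonoid $A$, this gives exactly the claimed factorization of the denominator.

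I do not anticipate a genuine obstacle: the only substantive inputs are \autoref{thm:walks} and the triangularity lemma, both proved earlier, together with a direct application of the transfer matrix generating-function identity from \cite{EC}. The one small point to verify carefully is that summing the cofactor expansions of $F_{ij}(x)$ over all pairs $(i,j)$ really does produce the stated numerator without sign or indexing errors; this is a matter of bookkeeping with the $(-1)^{i+j}$ factors, which match the standard cofactor sign convention.
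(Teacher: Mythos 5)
Your proof is correct and follows the paper's argument exactly: sum the cofactor formula from \cite[Theorem 4.7.2]{EC} over all $(i,j)$ using \autoref{thm:walks}, then identify the denominator $\det(I-xW)$ with $\prod_{A\le M}(1-w(A,A)x)$ via the lower triangularity from \autoref{lemma:upper_triangular}. No discrepancies.
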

\begin{proof}
From the preceding discussion, the only thing left to check is the expression for the denominator of $F(x)$. This follows because $W$ is lower triangular with eigenvalues $w(A,A)$, $A\le M$.
\end{proof}

\section{Enumerating submonoids of join-semilattices}\label{sec:semilattices}
\subsection{General results}
We now specialize to the case where $M = P$ is a finite join-semilattice (\emph{i.e.}, commutative idempotent monoid; see \autoref{ex:monoids}(b)). In \autoref{thm:diag}, we prove that $W = W(P)$ is diagonalizable, and this will give us excellent control over $\#\SubMon(P\times\chain n)$.

Fix a linear extension of $\SubMon P$ so that $W(P)$ is lower triangular (see \autoref{lemma:upper_triangular}). The following lemma is a basic fact from linear algebra. We first encountered it at \cite{mse-diagonalizability}. 

\begin{lemma}\label{lemma:diag}
Suppose $W$ is a triangular square matrix (lower or upper) with entries in a field $F$, and suppose that the diagonal entries of $W$ are arranged so that if two agree, then all diagonal entries between them agree. Then $W$ is diagonalizable over $F$ if and only if each square diagonal block with a single value on its diagonal is in fact a diagonal matrix as depicted in \autoref{fig:matrix}.\hfill\qedsymbol
\end{lemma}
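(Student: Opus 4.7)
The plan is to reduce the lemma to a per-eigenvalue rank computation via the standard fact that $W$ is diagonalizable over $F$ if and only if, for each eigenvalue, the geometric and algebraic multiplicities agree. Since $W$ is triangular, its eigenvalues are the diagonal entries, and by the ordering hypothesis the indices where a fixed eigenvalue $\mu$ appears form a consecutive interval $J\subseteq\{1,\ldots,n\}$; thus the algebraic multiplicity of $\mu$ equals $|J|$, the size of the unique diagonal block $B_\mu$ of $W$ associated to $\mu$.

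Partition the index set as $S_-\sqcup J\sqcup S_+$ according to whether an index lies before, inside, or after $J$. With respect to this partition, $W-\mu I$ has block form
\[
\begin{pmatrix} M_- & 0 & 0 \\ * & B_\mu-\mu I & 0 \\ * & * & M_+ \end{pmatrix},
\]
where $M_\pm$ are triangular with all diagonal entries nonzero (they are of the form $\lambda_j-\mu\ne 0$ for $j\in S_\pm$), hence invertible, while $B_\mu-\mu I$ is strictly triangular of size $|J|$.

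The heart of the argument is the identity $\rank(W-\mu I)=(n-|J|)+\rank(B_\mu-\mu I)$. I would establish this by a short block-matrix calculation: invertibility of $M_-$ lets one use block row operations to zero out the $*$-entries in the first block-column, and symmetrically $M_+$ lets one clear the $*$-entries in the last block-column (and the one remaining block), reducing $W-\mu I$ to the block-diagonal matrix $\diag(M_-,\,B_\mu-\mu I,\,M_+)$. Its rank is $|S_-|+\rank(B_\mu-\mu I)+|S_+|=(n-|J|)+\rank(B_\mu-\mu I)$, as claimed.

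Consequently the geometric multiplicity of $\mu$ equals $|J|-\rank(B_\mu-\mu I)$, which matches the algebraic multiplicity $|J|$ precisely when $B_\mu-\mu I=0$, i.e., when $B_\mu=\mu I$ is a scalar diagonal matrix. Taking the conjunction over all distinct eigenvalues yields both directions of the lemma. The argument applies uniformly in the upper-triangular case by transposition. The only real obstacle is bookkeeping for the block operations; no individual step is subtle.
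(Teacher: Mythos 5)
The paper does not actually prove \autoref{lemma:diag}; it is stated as a known fact with a citation to an external source, so there is no in-paper proof to compare against. Your argument is correct and complete: reducing diagonalizability to the equality of geometric and algebraic multiplicities, observing that the ordering hypothesis makes the algebraic multiplicity of each eigenvalue $\mu$ equal to the size of its (unique, consecutive) diagonal block $B_\mu$, and establishing $\rank(W-\mu I)=(n-|J|)+\rank(B_\mu-\mu I)$ by block elimination using the invertible blocks $M_\pm$ is exactly the right computation, and it yields both implications at once. One small wording slip: after clearing the first block-column with row operations against $M_-$, the one remaining nonzero off-diagonal block sits in position $(3,2)$, which is in the \emph{middle} block-column, not the last; it is cleared by a \emph{column} operation using $M_+^{-1}$. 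Your parenthetical ``(and the one remaining block)'' shows you have the right target, but the phrase ``last block-column'' should be corrected to avoid confusion.
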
 

\begin{figure}
    \centering
    \begin{tikzpicture}[scale=1]

\draw[thick] (0,0) rectangle (6,6);

\draw[thick] (2,1) -- (2,6);
\draw[thick] (5,0) -- (5,4);
\draw[thick] (2,1) -- (6,1);
\draw[thick] (0,4) -- (5,4);

\fill[gray] (0,0) rectangle (2,4);
\fill[gray] (2,0) rectangle (5,1);

\node at (0.5,5.5) {$\lambda_1$};
\node at (1.5,4.5) {$\lambda_1$};
\node at (2.5,3.5) {$\lambda_2$};
\node at (3.5,2.5) {$\lambda_2$};
\node at (4.5,1.5) {$\lambda_2$};
\node at (5.5,0.5) {$\lambda_3$};

\end{tikzpicture}
    \caption{A graphical representation of a $6\times 6$ lower triangular matrix of the shape described in \autoref{lemma:diag}. All blank regions are filled with $0$'s, while the entries in the gray region are arbitrary. A lower triangular matrix with equal diagonal entries contiguous is diagonalizable if and only if it has this shape.}
    \label{fig:matrix}
\end{figure}
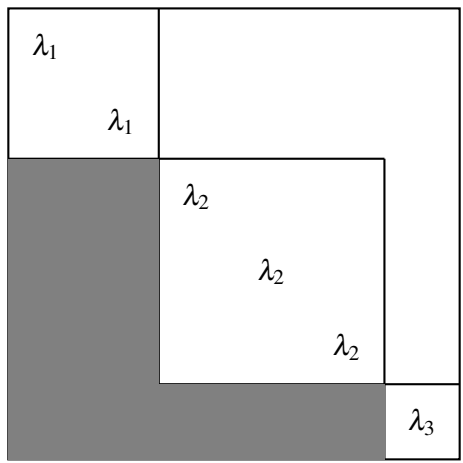

To prove the diagonalizability of $W(P)$, we will show that if two distinct submonoids $A,B\le P$ satisfy $w(A,A)=w(B,B)$, then $A$ and $B$ are incomparable, whence $w(A,B) = 0$. For the purposes of this proof, recall that an \emph{antichain} in a poset is a collection of mutually incomparable elements; it is a basic fact from order theory that antichains are in bijection with ideals. (The bijection takes an antichain to the ideal generated by it, which is given by its upward closure, and it takes an ideal to its set of minimal elements.)

\begin{lemma}\label{lemma:order_rev}
Suppose $P$ is a finite join-semilattice. Then the map
\[
\begin{aligned}
  \SubMon P&\longrightarrow \NN\\
  A&\longmapsto w(A,A)
\end{aligned}
\]
is strictly increasing.
\end{lemma}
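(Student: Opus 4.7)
The plan is to exploit the correspondence, noted just above the lemma, between ideals and upsets in the idempotent setting. For any submonoid $A\le P$, the definition gives
\[
  w(A,A) = \#\{I\subseteq A\mid I\text{ an ideal of }A\text{ and }I\cup A=A\},
\]
but the condition $I\cup A=A$ is automatic when $I\subseteq A$, so $w(A,A)$ is just the number of ideals of the sub-join-semilattice $A$, equivalently the number of upsets of the poset $(A,\le)$. Thus I need to show: if $A\subsetneq B$ are submonoids of $P$, then $A$ has strictly fewer upsets than $B$.

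To prove this, I would exhibit a restriction map
\[
  \rho\colon \{\text{upsets of }B\}\longrightarrow \{\text{upsets of }A\},\qquad \rho(I) = I\cap A.
\]
The first step is to check $\rho$ is well-defined: if $x\in I\cap A$ and $y\in A$ with $y\ge x$, then $y\in B$ and $y\ge x\in I$ force $y\in I$, hence $y\in I\cap A$. The second step is to show $\rho$ is surjective: given an upset $J$ of $A$, its upward closure in $B$,
\[
  \uparrow_B\! J := \{b\in B\mid b\ge a\text{ for some }a\in J\},
\]
is an upset of $B$, and intersecting with $A$ recovers exactly $J$ because $J$ is already upward closed in $A$.

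The third (and crucial) step is to exhibit a failure of injectivity when $A\subsetneq B$. Since $B\setminus A$ is nonempty and $B$ is finite, pick any element $b\in B\setminus A$ which is maximal in $B\setminus A$. I would then verify that both $\uparrow_B\! A$ and $\uparrow_B\! A\cup\{b\}$ are upsets of $B$ (for the latter, upward closure uses the maximality of $b$ in $B\setminus A$, so anything strictly above $b$ in $B$ lies in $A\subseteq \uparrow_B\! A$), that they are distinct (they differ on $b$), and that both restrict under $\rho$ to $A\cap \uparrow_B\! A=A$, using $b\notin A$. This yields $w(B,B)>w(A,A)$, as desired.

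The main obstacle is psychological rather than technical: one has to be careful that upsets of $A$ are taken with respect to the order on $A$ (inherited from $P$), not with respect to $B$, so that the restriction map is well-defined. Once the distinction is kept straight, the surjectivity and non-injectivity arguments are short set-theoretic verifications, and no subtler structure of $P$ is needed.
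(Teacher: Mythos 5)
Your strategy --- exhibit a surjective but non-injective restriction map $\rho\colon\{\text{upsets of }B\}\to\{\text{upsets of }A\}$, $\rho(I)=I\cap A$ --- is a legitimate alternative to the paper's route (which instead passes to antichains via the standard bijection and notes that antichains of $A$ inject into antichains of $B$ with $\{x\}$, $x\in B\setminus A$, missed). Your well-definedness and surjectivity checks are both correct.

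However, the non-injectivity step contains a genuine error. You propose the two upsets $\uparrow_B A$ and $\uparrow_B A\cup\{b\}$, claiming they ``differ on $b$.'' They do not: every submonoid $A\le P$ contains the monoid identity $e$, and since $P$ is a commutative idempotent monoid, $e$ is the \emph{minimum} of the poset $(P,\le)$ (indeed $e\vee x=x$ is exactly $e\le x$). Hence $b\ge e\in A$ already, so $b\in\uparrow_B A$, and in fact $\uparrow_B A=B$ for every submonoid $A$. The two sets you name are both equal to $B$, so you have not produced a collision.

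The fix is easy and stays within your framework. Take any $b\in B\setminus A$ (maximality is not even needed) and consider
\[
  I_1 := \{c\in B\mid c\ge b\}
  \qquad\text{and}\qquad
  I_2 := \{c\in B\mid c>b\}.
\]
Both are upsets of $B$, they differ exactly by the element $b$, and because $b\notin A$ they satisfy $I_1\cap A = I_2\cap A = \{c\in A\mid c>b\}$. This exhibits the desired failure of injectivity and completes the proof. Alternatively you could switch to the paper's antichain formulation, where the strictness is visible at a glance: the singleton $\{b\}$ is an antichain of $B$ that is not a subset of $A$.
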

\begin{proof}
Suppose $A<B\le P$. We have
\[
  w(A,A) = \#\{\text{ideals of }A\} = \#\{\text{antichains in }A\}.
\]
Antichains in $A$ are also antichains in $B$, so $w(A,A)\le w(B,B)$. Since $A<B$, we may take $x\in B\setminus A$ to exhibit an antichain $\{x\}$ in $B$ that is not an antichain in $A$. Thus $w(A,A)<w(B,B)$, as required.
\end{proof}

Before moving on to diagonalizability, we use \autoref{lemma:order_rev} to pin down the asymptotics of $\#\SubMon(P\times\chain n)$:

\begin{theorem}\label{thm:semilattice_asymptotics}
Suppose $P$ is a finite join-semilattice. Then
\[
  \#\SubMon(P\times \chain n)\sim C\cdot i(P)^n
\]
where $C=C(P)$ is a positive constant and $i(P)=w(P,P)$ is the number of antichains of $P$.
\end{theorem}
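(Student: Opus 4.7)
The plan is to deduce the theorem directly from \autoref{thm:asymptotics}, whose conclusion in the case $r(M)=1$ is exactly the sharp asymptotic claimed here. All that remains is to verify that the hypothesis $r(P)=1$ is automatic when $P$ is a finite join-semilattice.

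First, I would observe that $P$ itself is the (unique) maximum element of $(\SubMon P,\subseteq)$. Combined with \autoref{lemma:order_rev}---which asserts that $A\mapsto w(A,A)$ is strictly increasing---this forces $A\mapsto w(A,A)$ to attain a \emph{strict} global maximum at $A=P$. Hence $i(P)=w(P,P)$, and this value is achieved by only one submonoid of $P$, so $r(P)=1$. Feeding this into \autoref{thm:asymptotics} immediately yields $\#\SubMon(P\times\chain n)\sim C\cdot i(P)^n$ for some positive constant $C=C(P)$.

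Finally, to reconcile $i(P)=w(P,P)$ with the statement's description as ``the number of antichains of $P$,'' I would unwind the definition: the condition $I\cup P=P$ is vacuous, so $w(P,P)=\#\{I\subseteq P\mid I\text{ is an ideal of }P\}$; in the idempotent setting ideals coincide with upsets, which biject with antichains via the minimal-element map recalled just before \autoref{lemma:order_rev}. I do not expect any real obstacle here: the only substantive input is \autoref{thm:asymptotics} itself, whose proof rests on a standard Jordan-form analysis of the lower-triangular nonnegative matrix $W(P)$. Modulo that input, the present result is a direct corollary of the strict monotonicity supplied by \autoref{lemma:order_rev}.
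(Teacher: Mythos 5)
Your argument is correct and is essentially identical to the paper's own proof: both deduce the result from \autoref{thm:asymptotics} by showing $r(P)=1$, using \autoref{lemma:order_rev} together with the fact that $P$ is the unique maximum of $\SubMon P$, and then identifying $w(P,P)$ with the number of antichains via the ideal--upset--antichain correspondence.
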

\begin{proof}
By \autoref{thm:asymptotics}, it suffices to show that the multiplicity of the eigenvalue $i(P)=w(P,P)$ is one. All eigenvalues are of the form $w(A,A)$, and \autoref{lemma:order_rev} implies that $A\mapsto w(A,A)$ achieves its maximum value $w(P,P)$ once (because $P$ is the unique maximum of $\SubMon P$).
\end{proof}

\begin{theorem}\label{thm:diag}
Let $P$ be a finite join-semilattice. Then $W(P)$ is diagonalizable with eignevalues the number of antichains in $A$, $A\in \SubMon P$.
\end{theorem}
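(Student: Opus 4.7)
The plan is to combine the triangularity criterion from \autoref{lemma:upper_triangular} with the strict monotonicity of \autoref{lemma:order_rev} and then invoke \autoref{lemma:diag}. Fix a linear extension of $\SubMon P$ with the additional property that submonoids are sorted primarily by the value $w(A,A)$, with ties broken by any linear extension of the inclusion order restricted to each level set. Such an extension exists: by \autoref{lemma:order_rev}, if $A<B$ then $w(A,A)<w(B,B)$, so ordering first by $w(\cdot,\cdot)$ already respects inclusion. With this choice, equal diagonal entries of $W(P)$ appear in contiguous blocks, and \autoref{lemma:upper_triangular} still applies, giving us a lower triangular matrix.

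Next I would check that each diagonal block of $W(P)$ corresponding to a fixed value $\lambda$ is in fact diagonal, which is the hypothesis required by \autoref{lemma:diag}. Suppose $A\ne B$ both satisfy $w(A,A)=w(B,B)=\lambda$. By \autoref{lemma:order_rev}, the map $A\mapsto w(A,A)$ is strictly increasing on $(\SubMon P,\subseteq)$, so $A$ and $B$ cannot be comparable under inclusion. In particular $B\not\subseteq A$, so by the observation in the proof of \autoref{lemma:upper_triangular}, $w(A,B)=0$; symmetrically $w(B,A)=0$. Hence every off-diagonal entry within a constant-$\lambda$ block vanishes, so the block is diagonal.

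Applying \autoref{lemma:diag} then shows that $W(P)$ is diagonalizable over $\QQ$. The eigenvalues of a triangular matrix are its diagonal entries, which here are the numbers $w(A,A)$ for $A\in \SubMon P$. The proof of \autoref{lemma:order_rev} identified $w(A,A)$ as the number of ideals of $A$, which for the idempotent monoid $A$ equals the number of antichains of $A$ via the standard bijection between ideals (i.e., upsets) and antichains. This yields both claims of the theorem.

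The only substantive obstacle is the contiguity requirement of \autoref{lemma:diag}, but this is handled cleanly by the strict monotonicity in \autoref{lemma:order_rev}, which simultaneously guarantees that a single linear ordering can be chosen to be both a linear extension of inclusion and a sorting by diagonal value. The rest is a direct chain of previously established facts.
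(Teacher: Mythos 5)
Your proof is correct and follows exactly the paper's approach: the paper's (very terse) proof simply states that the result follows from \autoref{lemma:diag} and \autoref{lemma:order_rev}, and the preceding paragraph in the paper sketches precisely the argument you give — that distinct submonoids $A,B$ with $w(A,A)=w(B,B)$ are incomparable by \autoref{lemma:order_rev}, hence $w(A,B)=w(B,A)=0$, so each constant-diagonal block is diagonal. Your filling in of the contiguity detail (choosing the linear extension to sort primarily by $w(\cdot,\cdot)$) and the ideals-equal-antichains identification are the same steps the paper leaves implicit.
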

\begin{proof}
This follows directly from \autoref{lemma:diag} and \autoref{lemma:order_rev}.
\end{proof}

The diagonalizability of $W(P)$ has immediate enumerative consequences:

\begin{corollary}\label{cor:fmla}
Suppose $P$ is a finite join-semilattice and let $\Lambda = \Lambda(P)$ be the finite set of positive integers
\[
  \Lambda := \{w(A,A)\mid A\in \SubMon P\}.
\]
For $\lambda\in\Lambda$, let $b_\lambda = b_\lambda(P)$ be the rational number \[
  b_\lambda := \mathbf{1}^TQ_\lambda\mathbf{1},
\] where $Q_\lambda = Q_\lambda(P)$ is the matrix acting by projection onto the $\lambda$-eigenspace of $W(P)$ when considering the decomposition into eigenspaces. 
Then
\[
  \#\SubMon(P\times\chain n) = \sum_{\lambda\in \Lambda}b_\lambda \lambda^n.
\]
\end{corollary}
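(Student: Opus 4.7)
The plan is to combine \autoref{thm:walks} with the diagonalizability result \autoref{thm:diag} by way of the spectral decomposition of $W(P)$. By \autoref{thm:walks}, we have the compact identity
\[
  \#\SubMon(P\times\chain n) = \mathbf{1}^\top W^n \mathbf{1},
\]
where $W = W(P)$, so everything reduces to understanding $W^n$.

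First I would invoke \autoref{thm:diag} to record that $W$ is diagonalizable with spectrum $\Lambda$. Since $W$ is diagonalizable, the vector space $\RR^{\SubMon P}$ decomposes as the direct sum of eigenspaces $V_\lambda$ for $\lambda \in \Lambda$, and we have the associated spectral resolution of the identity $I = \sum_{\lambda\in\Lambda} Q_\lambda$ with $Q_\lambda Q_\mu = \delta_{\lambda,\mu}Q_\lambda$, where $Q_\lambda$ is exactly the projector appearing in the statement. Multiplying $W$ against this resolution yields $W = \sum_{\lambda\in\Lambda} \lambda Q_\lambda$, and then by orthogonality of the projectors,
\[
  W^n = \sum_{\lambda\in\Lambda} \lambda^n Q_\lambda.
\]
Plugging this into the expression from \autoref{thm:walks} gives
\[
  \#\SubMon(P\times\chain n) = \mathbf{1}^\top W^n \mathbf{1} = \sum_{\lambda\in\Lambda} \lambda^n\, \mathbf{1}^\top Q_\lambda \mathbf{1} = \sum_{\lambda\in\Lambda} b_\lambda \lambda^n,
\]
which is the desired identity.

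The only remaining subtlety is confirming that the $b_\lambda$ really are \emph{rational} numbers rather than merely real. For this I would note that $W$ has integer entries and that, by \autoref{lemma:order_rev}, its eigenvalues $\lambda\in\Lambda$ are integers (in fact positive integers counting antichains). Hence each eigenspace $V_\lambda = \ker(W - \lambda I)$ is cut out by a linear system with integer coefficients and so admits a basis of rational vectors; the spectral projector $Q_\lambda$ is the unique projector onto $V_\lambda$ along $\bigoplus_{\mu\neq \lambda}V_\mu$ and is consequently a matrix with rational entries. Therefore $b_\lambda = \mathbf{1}^\top Q_\lambda \mathbf{1} \in \QQ$. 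Since no step involves more than an assembly of earlier results, there is no serious obstacle; the only thing one must take care about is making the spectral decomposition and rationality claim precise, and both are routine once \autoref{thm:diag} and \autoref{lemma:order_rev} are in hand.
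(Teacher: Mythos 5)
Your proposal is correct and follows essentially the same route as the paper: combine \autoref{thm:walks}'s transfer-matrix identity $\#\SubMon(P\times\chain n)=\mathbf{1}^\top W^n\mathbf{1}$ with the diagonalizability from \autoref{thm:diag} via the spectral decomposition $W^n=\sum_\lambda \lambda^n Q_\lambda$. The only addition is your explicit justification that the $b_\lambda$ are rational, which the paper leaves implicit (and which also follows directly from the formula $Q_\lambda=\prod_{\mu\neq\lambda}(W-\mu I)/(\lambda-\mu)$ noted in \autoref{rmk:interpretation-of-coeffs}).
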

\begin{proof}
This follows from \autoref{thm:walks} and \autoref{thm:diag} via the usual formul{\ae} for powers of diagonalizable matrices. The formula for the $b_\lambda$ follows from the spectral decomposition of $W(P)$.
\end{proof}

\begin{remark}\label{rmk:interpretation-of-coeffs}
While the natural numbers in $\Lambda(P)$ encode combinatorial structure as the number of antichains in submonoids of $P$, the rational numbers $b_\lambda$ have resisted combinatorial interpretation thus far. The subsequent formul{\ae} we will derive for specific examples demonstrate that these numbers tend to have large denominators, and we view it as an important open question to compute them --- or, equivalently, an eigenbasis of $W(P)$ --- in terms of $P$.  Some explicit computations are included in \autoref{sec:table}.

The decomposition \[
  Q_\lambda = \prod_{\substack{\mu\in\Lambda\\ \mu\neq \lambda}} \frac{W - \mu I}{\lambda - \mu}
\] for the spectral projectors suggests instead attempting to interpret the normalized coefficients $b_\lambda\prod_{\mu\neq\lambda} (\lambda - \mu)$, which are always integers. Despite numerous searches on the OEIS, these values remain similarly mysterious. That said, see \autoref{prop:bvals} for a method to determine the $b_\lambda$ in terms of initial values and a Vandermonde matrix.
\end{remark}

\autoref{cor:fmla} implies that $\#\SubMon(P\times\chain n)$ is a constant-recursive sequence. The following result is standard for such sequences; see \cite[Theorem 4.1.1]{EC}.
\begin{prop}\label{prop:recur}
Let $P$ be a finite join-semilattice, and let $a_i$ be the coefficient of the degree $i$ term in $\prod_{\lambda\in\Lambda}(1-\lambda x)$. Then for $n\ge \#\Lambda$,
\[
  \#\SubMon(P\times\chain n) + \sum_{i=1}^{\#\Lambda} a_i\#\SubMon(P\times\chain{n-i}) = 0.
\]
\end{prop}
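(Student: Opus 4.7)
The plan is to derive the recursion as a direct consequence of \autoref{cor:fmla}, using the standard observation that the polynomial $\prod_{\lambda\in\Lambda}(1-\lambda x)$ is (the reciprocal of) the characteristic polynomial of the linear recurrence satisfied by any finite $\Lambda$-exponential sum $\sum_\lambda b_\lambda \lambda^n$.

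Concretely, I would first invoke \autoref{cor:fmla} to write $s_n := \#\SubMon(P\times\chain n) = \sum_{\lambda\in\Lambda} b_\lambda \lambda^n$, reducing the problem to showing that the sequence $(s_n)$ is annihilated by the operator $1 + \sum_{i=1}^{\#\Lambda} a_i T^{-i}$, where $T$ denotes the shift. Next I would record the two key facts about the polynomial $f(x) := \prod_{\lambda\in\Lambda}(1-\lambda x) = \sum_{i=0}^{\#\Lambda} a_i x^i$: its constant term is $a_0 = 1$, and it vanishes at every $x = 1/\lambda$ with $\lambda\in\Lambda$. The latter gives $\sum_{i=0}^{\#\Lambda} a_i \lambda^{-i} = 0$ for each $\lambda\in\Lambda$, and multiplying by $\lambda^n$ (which is valid for any $n\ge \#\Lambda$, so that $n-i\ge 0$ throughout) yields
\[
  \sum_{i=0}^{\#\Lambda} a_i \lambda^{n-i} = 0.
\]

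The final step is to swap the order of summation:
\[
  \sum_{i=0}^{\#\Lambda} a_i s_{n-i}
    = \sum_{i=0}^{\#\Lambda} a_i \sum_{\lambda\in\Lambda} b_\lambda \lambda^{n-i}
    = \sum_{\lambda\in\Lambda} b_\lambda \sum_{i=0}^{\#\Lambda} a_i \lambda^{n-i}
    = 0.
\]
Extracting the $i=0$ term and using $a_0=1$ gives exactly the claimed identity.

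There is no real obstacle here; this is the standard fact that constant-recursive sequences of the form $\sum b_\lambda \lambda^n$ satisfy the recurrence whose characteristic polynomial has the $\lambda$ as roots. The only minor bookkeeping is to confirm that $a_0 = 1$ (immediate from the product form) and that the shift $n\ge \#\Lambda$ is sufficient to keep all indices $n-i$ nonnegative so that \autoref{cor:fmla} applies termwise.
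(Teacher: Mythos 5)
Your proof is correct. It establishes the recurrence by the direct termwise route: substitute the closed form $s_n=\sum_\lambda b_\lambda\lambda^n$ from \autoref{cor:fmla}, use that each $1/\lambda$ is a root of $f(x)=\prod_{\mu\in\Lambda}(1-\mu x)$ to get $\sum_{i=0}^{\#\Lambda}a_i\lambda^{n-i}=0$, and swap the two finite sums; the normalization $a_0=1$ and the hypothesis $n\ge\#\Lambda$ (so every index $n-i$ is nonnegative) are exactly the bookkeeping points you flag. The paper reaches the same conclusion through the ordinary generating function instead: it writes $F(x)=\sum_\lambda b_\lambda/(1-\lambda x)=f(x)/\prod_\lambda(1-\lambda x)$ with $\deg f\le\#\Lambda-1$, clears the denominator, and matches coefficients in degree $n\ge\#\Lambda$. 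The two arguments are equivalent incarnations of the standard fact that a finite exponential sum is constant-recursive with characteristic polynomial $\prod(1-\lambda x)$; yours avoids introducing $F(x)$ at all, while the paper's version sits more naturally alongside \autoref{prop:ord_gen} and makes visible why the degree bound on the numerator forces the recurrence only from index $\#\Lambda$ onward. Either is acceptable, and no gap remains in your argument.
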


\begin{proof}
As before, write $F(x)$ for the ordinary generating function of $\#\SubMon(P\times \chain n)$. By \autoref{cor:fmla}, we can also express $F(x)$ as
\[
  F(x) = \sum_{n\ge 0}\sum_{\lambda\in \Lambda}b_\lambda \lambda^nx^n = \sum_{\lambda\in \Lambda} \frac{b_\lambda}{1-\lambda x} = \frac{f(x)}{\prod_{\lambda\in \Lambda} (1-\lambda x)}
\]
where $f(x)$ is a polynomial with rational coefficients of degree $\#\Lambda-1$. Thus
\[
  \prod_{\lambda\in \Lambda}(1-\lambda x)\sum_{n\ge 0}\#\SubMon(P\times\chain n)x^n = f(x)
\]
and matching the terms of degree $n\ge \#\Lambda$ on both sides gives the recurrence.
\end{proof}

Finally, we observe that the $b_\lambda(P)$ coefficients of \autoref{cor:fmla} can be derived from $\Lambda(P)$ and the initial $\#\Lambda(P)$ terms of $\#\SubMon(P\times\chain n)$.

\begin{prop}\label{prop:bvals}
Suppose $P$ is a finite join-semilattice, let $\Lambda=\Lambda(P) = \{\lambda_1,\ldots,\lambda_k\}$ with cardinality $k$, and let $S_n = \#\SubMon(P\times\chain n)$. Let $V$ be the $k\times k$ Vandermonde matrix
\[
  V = (\lambda_j^r)_{0\le r\le k-1,1\le j\le k}.
\]
Then $b_i := b_{\lambda_i}(P)$ are the unique rational numbers satisfying
\[
  V
  \begin{pmatrix}
      b_1\\ b_2\\ \vdots\\ b_k
  \end{pmatrix}
  =
  \begin{pmatrix}
      S_0\\ S_1\\ \vdots\\ S_{k-1}
  \end{pmatrix}.
\]
\end{prop}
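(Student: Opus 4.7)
The plan is to derive the linear system directly from \autoref{cor:fmla} by evaluating the formula $\#\SubMon(P\times\chain n) = \sum_{\lambda\in\Lambda}b_\lambda\lambda^n$ at the first $k$ nonnegative integer values of $n$, and then to invoke the invertibility of the Vandermonde matrix to conclude uniqueness of the $b_{\lambda_i}$.

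First, I would observe that \autoref{cor:fmla} gives, for each $n\ge 0$,
\[
  S_n = \sum_{j=1}^k b_{\lambda_j}\lambda_j^n.
\]
Specializing to $n = 0, 1, \ldots, k-1$ and stacking these $k$ equations into a single matrix equation yields exactly the stated identity $V\,(b_1,\ldots,b_k)^T = (S_0,\ldots,S_{k-1})^T$, by the very definition of $V = (\lambda_j^r)_{0\le r\le k-1,\,1\le j\le k}$. This establishes that $(b_1,\ldots,b_k)^T$ is \emph{a} solution of the system.

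For uniqueness, I would invoke the classical fact that a Vandermonde matrix built from pairwise distinct values has nonzero determinant $\prod_{i<j}(\lambda_j-\lambda_i)$. The entries $\lambda_1,\ldots,\lambda_k$ are the distinct elements of the finite set $\Lambda(P)\subseteq \ZZ_{>0}$, hence pairwise distinct, so $V$ is invertible over $\QQ$. Therefore the $b_i$ are uniquely determined by the system, and since $(b_1,\ldots,b_k)^T = V^{-1}(S_0,\ldots,S_{k-1})^T$ with $V^{-1}$ having rational entries and each $S_n\in \ZZ$, the $b_i$ are automatically rational.

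There is no real obstacle here: the work has already been done in \autoref{cor:fmla} (which guarantees the existence of the decomposition and the rationality of the $b_\lambda$), and the remainder is the standard invertibility of a Vandermonde matrix with distinct nodes. The only thing worth emphasizing in the write-up is that the distinctness of the $\lambda_j$ is built into the definition $\Lambda = \{w(A,A)\mid A\in\SubMon P\}$ as a \emph{set}, which is what licenses the invocation of the Vandermonde determinant formula.
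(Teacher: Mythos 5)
Your proof is correct and is essentially the paper's argument: the paper phrases the identity $S_r=\sum_i b_i\lambda_i^r$ via derivatives of the exponential generating function $\sum_i b_ie^{\lambda_i z}$ at $z=0$, but this is just a cosmetic detour from your direct substitution of $n=0,\ldots,k-1$ into \autoref{cor:fmla}. Both then conclude by the invertibility of the Vandermonde matrix with distinct nodes, exactly as you do.
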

\begin{proof}
By \autoref{cor:fmla}, the exponential generation function $S(z)$ of $S_n$ takes the form
\[
  S(z) := \sum_{n\ge 0} S_n\frac{z^n}{n!} = \sum_{i=1}^k b_ie^{\lambda_i z}.
\]
Thus
\[
  S^{(r)}(0) = \sum_{i=1}^kb_i\lambda_i^r = S_r.
\]
The system of equations given by the right-hand equality for $0\le r\le k-1$ is exactly $Vb=S$ for $b = (b_1,\ldots,b_k)^\top$, $S = (S_0,\ldots,S_{k-1})^\top$. This system has a unique solution because $V$ is invertible. (Indeed, it is a standard fact that $\det V = \prod_{1\le i<j\le k}(\lambda_j-\lambda_i)$, which is nonzero since the $\lambda_i$ are distinct.)
\end{proof}

\subsection{Formul{\ae} for specific semilattices}
For a general finite join-semilattice $P$, the problem of determining $\Lambda(P)$ and the associated $b_\lambda$ seems to be quite hard. Given the relation between $\Lambda(P)$ and the enumeration of antichains, this should come as no surprise. (See \autoref{rmk:oeis} for additional comments on the inherent difficulty of this problem.)

Our first result in this direction is a complete determination of $\Lambda(\chain m)$.
\begin{prop}\label{prop:mLambda}
For a nonnegative integer $m$, the eigenvalues of $W(\chain m)$ are
\[
  \Lambda(\chain m) = \{2,3,\ldots,m+2\}
\]
\end{prop}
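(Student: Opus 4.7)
The plan is to use \autoref{thm:diag}, which identifies $\Lambda(\chain m)$ with the set $\{w(A,A) \mid A \in \SubMon \chain m\}$, and then observe that $w(A,A)$ equals the number of antichains of $A$ (by the standard bijection between ideals/upsets of a finite join-semilattice and its antichains). So the task reduces to computing, for every submonoid $A \le \chain m$, the number of antichains of the sub-poset $A$.

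First I would identify the submonoids of $\chain m$ explicitly. Since $\chain m$ is totally ordered under $\vee = \max$, for any two elements $a,b$ we have $a\vee b \in \{a,b\}$, so any subset of $\chain m$ containing $0$ is automatically closed under $\vee$. Hence $\SubMon \chain m$ is precisely the collection of subsets of $\chain m$ containing $0$, and each such submonoid $A$ is itself a finite chain as a sub-poset of $\chain m$.

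Next I would count antichains in a finite chain. In a totally ordered set, the only antichains are the empty set and the singletons, so a chain with $k$ elements has exactly $k+1$ antichains. Applying this to $A \le \chain m$ with $|A| = k$, we get $w(A,A) = k+1$. Since $A$ must contain $0$, the cardinality $k$ ranges over $\{1, 2, \ldots, m+1\}$, and every value in this range is achieved (for instance by $\{0,1,\ldots,k-1\}$). Therefore $w(A,A)$ takes exactly the values $2, 3, \ldots, m+2$, which is the claim.

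There isn't really a hard step here; the whole argument is an unpacking of definitions once one notices that every submonoid of a chain is itself a chain. If anything, the only point worth double-checking is the bijection between antichains and ideals for the idempotent monoid $\chain m$, which is the reason $w(A,A)$ has the clean combinatorial interpretation we exploit.
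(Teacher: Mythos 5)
Your proof is correct and follows essentially the same route as the paper's: identify submonoids of $\chain m$ as subsets containing $0$, then count antichains in a chain ($\#S+1$, since an antichain in a totally ordered set is empty or a singleton). The only difference is that you spell out the bijection between ideals and antichains and the invocation of \autoref{thm:diag} more explicitly, which the paper leaves implicit.
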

\begin{proof}
The submonoids of $[m]$ are exactly the subsets $S\subseteq \chain m$ that contain $0$. Furthermore, $w(S,S) = \#S+1$ since an antichain in a linearly ordered set is either a singleton or empty.
\end{proof}





We now consider the join-semilattice $\chain m\times \chain 1$.
\begin{prop}
For $m$ a nonnegative integer, the eigenvalues of $W(\chain m\times \chain 1)$ are
\[
  \Lambda(\chain m\times\chain 1) = \left\{\lambda\in \ZZ~\middle|~ 2\le \lambda\le \binom{m+3}{2}\text{ and }\lambda\ne \binom{m+3}{2}-1\right\}.
\]
\end{prop}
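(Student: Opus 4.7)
By \aref{thm:diag}, the eigenvalues of $W([m] \times [1])$ are exactly the integers $w(A, A)$ as $A$ ranges over submonoids of $P := [m] \times [1]$, each equal to the number of antichains in $A$. The plan is to parametrize each submonoid $A$ by its row supports $S_i := \{j \in [0, m] : (j, i) \in A\}$ for $i \in \{0, 1\}$. Submonoid closure reduces to $0 \in S_0$ together with $S_0 \cap [a, m] \subseteq S_1$ whenever $S_1 \ne \emptyset$ and $a := \min S_1$. Since any three elements of $P$ share a row, antichains in $P$ have size at most two, yielding
\[ w(A, A) = 1 + |S_0| + |S_1| + \#\{(i, j) \in S_0 \times S_1 : i > j\}. \]
Setting $A = P$ gives the upper bound $w(P, P) = \binom{m+3}{2}$.

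By \aref{lemma:order_rev}, $2 \le w(A, A) \le \binom{m+3}{2}$, with extremes at $A = \{(0, 0)\}$ and $A = P$. To establish the gap at $\binom{m+3}{2} - 1$, I argue that any proper submonoid $A \subsetneq P$ satisfies $w(P, P) - w(A, A) \ge 2$. Each element $x \in P \setminus A$ removes a distinct singleton antichain, so it suffices to address the case $|P \setminus A| = 1$. Here $\vee$-closure forbids the missing element from being $(i, 1)$ with $i \ge 1$ (as $(i, 0) \vee (0, 1) = (i, 1)$), so it is either $(i, 0)$ with $i \ge 1$ or $(0, 1)$. In either case, $P$ contains a two-element antichain involving the missing element --- namely $\{(i, 0), (0, 1)\}$ or $\{(0, 1), (1, 0)\}$, using $m \ge 1$ --- furnishing a second antichain lost from $A$.

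For achievability of every other value in $[2, \binom{m+3}{2}]$, I assemble three explicit families. The values $v \in [2, m+2]$ come from $A = [0, v-2] \times \{0\}$ (with $S_1 = \emptyset$), and $v = \binom{m+3}{2}$ from $A = P$. The intermediate range is handled by the family $A_T$ with $S_1 = [0, m]$ and $S_0 = \{0\} \cup T$ for $T \subseteq [1, m]$; a direct computation gives
\[ w(A_T, A_T) = m + 3 + \sum_{t \in T} (t+1), \]
so the realized values are $m + 3$ plus a subset sum of $\{2, 3, \ldots, m+1\}$. A standard induction on $m$ (combined with the complement-symmetry observation that $s$ is a subset sum iff $\binom{m+2}{2} - 1 - s$ is) shows these subset sums form exactly $\{0\} \cup [2, \binom{m+2}{2} - 3] \cup \{\binom{m+2}{2} - 1\}$: the omission of $1$ is forced by the absence of a singleton $\{1\}$, and by complement the omission $\binom{m+2}{2} - 2$ follows. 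Translating, $w(A_T, A_T) \in \{m+3\} \cup [m+5, \binom{m+3}{2} - 2] \cup \{\binom{m+3}{2}\}$. The one remaining value $v = m + 4$ (present only when $m \ge 2$) is realized by $S_0 = \{0, 2\}$ and $S_1 = [1, m]$.

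The main obstacle is the subset-sum characterization in the third step: establishing that subset sums of $\{2, 3, \ldots, m+1\}$ omit precisely $1$ and $\binom{m+2}{2} - 2$ requires an inductive argument that balances the base case with the complement-symmetry trick. Once this is in hand, the remainder of the proof is routine verification using the row-support parametrization and the antichain formula. Notably, the singular missing eigenvalue $\binom{m+3}{2} - 1$ is the direct translation of the ``missing $1$'' in the subset-sum problem.
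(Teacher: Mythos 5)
Your proposal is correct and fills in the details that the paper's proof sketch explicitly leaves to the reader, but it does so in a genuinely more explicit way. The paper's sketch reduces the exclusion of $\binom{m+3}{2}-1$ to submonoids of cardinality $2m+1$ via the observation that cardinality is a grading on $\SubMon(\chain m\times\chain 1)$, then invokes an unspecified ``case-by-case check'' for achievability. You instead parametrize submonoids by row supports $(S_0,S_1)$, derive the closed-form antichain count $w(A,A)=1+|S_0|+|S_1|+\#\{(i,j)\in S_0\times S_1 : i>j\}$, and get the gap directly: every missing element kills a singleton antichain, reducing to $|P\setminus A|=1$, where $\vee$-closure constrains the missing element to $(0,1)$ or $(i,0)$ with $i\ge 1$, and in either case a two-element antichain is also lost. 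Your achievability argument is also fully concrete --- chains $[0,v-2]\times\{0\}$ for small values, the one-parameter family $A_T$ reducing to a subset-sum problem on $\{2,\ldots,m+1\}$, and the patch $S_0=\{0,2\}$, $S_1=[1,m]$ for the value $m+4$ --- whereas the paper does not explain which submonoids realize which eigenvalues. The one gap is the subset-sum characterization, which you correctly identify as the main obstacle; it is true and provable by the induction-plus-complement-symmetry route you outline, so this is a legitimate ``leave-it-to-the-reader'' step.

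One small caveat shared with the paper: the statement as written claims to hold for all nonnegative $m$, but it fails for $m=0$ (where $\Lambda(\chain 0\times\chain 1)=\{2,3\}$, contradicting the formula, which would exclude $2=\binom{3}{2}-1$). Your gap argument, like the paper's sketch, silently assumes $m\ge 1$ when invoking the existence of $(1,0)$. This is an error in the paper's statement rather than in your argument, but it is worth flagging: both proofs only establish the claim for $m\ge 1$.
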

\begin{proof}[Proof sketch]
We leave a number of details to the reader, but the idea is as follows.

First note it is always the case that the elements of $\Lambda(P)$ are bounded below by $2$ and above by $w(P,P)$, which is equal to $\binom{m+3}{2}$ in this case.

The reader may check that the cardinality function is a grading on the poset of submonoids. Thus to show that $\binom{m+3}{2}-1$ is not in $\Lambda(\chain m\times\chain 1)$, it suffices to show that each submonoid of cardinality $2m+1$ has at most $\binom{m+3}{2}-2$ antichains.  Such submonoids do not contain either $(0,1)$ or $(i,0)$ for some $i>0$. In the former case, the submonoid does not have antichains $\{(0,1)\}$ and $\{(0,1),(1,0)\}$; in the latter, it does not have antichains $\{(i,0)\}$ and $\{(0,1),(i,0)\}$. We conclude that $\binom{m+3}{2}-1$ is not an eigenvalue.

Finally, the reader may perform a case-by-case check that the other potential eigenvalues $\lambda$, $2\le \lambda\le \binom{m+3}{2}-2$ are obtained.
\end{proof}

\begin{remark}
We have not found a general formula for $b_\lambda(\chain m\times\chain 1)$, but we produce some specific values in \autoref{sec:table}.
\end{remark}

Fix $k\ge 1$. We now consider the lattice $M_k = \{\bot,1,2,\ldots,k,\top\}$ in which $\bot$ is the minimum element, $\top$ is the maximum, and $1,2,\ldots,k$ are incomparable to each other. This lattice is the $k$-fold parallel composition of $[2]$ with itself.

\begin{prop}
For $k$ a positive integer, the eigenvalues of $W(M_k)$ are
\[
  \Lambda(M_k) = \{2\}\cup\{2^i+2\mid 0\le i\le k\}.
\]
\end{prop}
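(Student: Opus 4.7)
By \autoref{thm:diag}, the eigenvalues of $W(M_k)$ are precisely the values $w(A,A)$, i.e.\ the numbers of antichains in $A$ (including the empty antichain, which corresponds to the empty ideal), as $A$ ranges over $\SubMon M_k$. The plan is to classify $\SubMon M_k$ explicitly and compute the antichain count of each submonoid.

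\emph{Step 1: Classify the submonoids.} A submonoid $A\le M_k$ must contain the identity $\bot$ and be closed under $\vee$. Since $i\vee j = \top$ for any two distinct atoms $i,j\in\{1,\ldots,k\}$, any submonoid containing two or more atoms must also contain $\top$. This yields three exhaustive types:
\begin{enumerate}[(i)]
\item $A=\{\bot\}$;
\item $A=\{\bot,i\}$ for some $i\in\{1,\ldots,k\}$;
\item $A=\{\bot,\top\}\cup S$ for some (possibly empty) $S\subseteq\{1,\ldots,k\}$.
\end{enumerate}

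\emph{Step 2: Count antichains.} Type (i) contributes the value $2$, and type (ii) contributes $3$. For a submonoid of type (iii) with $|S|=j$, every antichain of $A$ is either a subset of $S$ or contains $\bot$ or $\top$. The subsets of $S$ contribute $2^j$ antichains (since the atoms in $S$ are mutually incomparable, every subset of $S$ is an antichain). Any antichain containing $\bot$ or $\top$ must equal $\{\bot\}$ or $\{\top\}$, since both are comparable to every other element of $A$. This gives $2^j+2$ antichains for type (iii).

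\emph{Step 3: Assemble $\Lambda(M_k)$.} Taking the union of the values obtained, the set of eigenvalues is $\{2\}\cup\{3\}\cup\{2^j+2:0\le j\le k\}$, which collapses to the claimed $\{2\}\cup\{2^i+2:0\le i\le k\}$ upon observing that $3=2^0+2$. No significant obstacle is anticipated; the only subtlety is verifying exhaustiveness in Step~2, which rests on the extreme comparability of $\bot$ and $\top$ in $M_k$.
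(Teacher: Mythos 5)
Your proof is correct and follows essentially the same route as the paper: classify the submonoids of $M_k$ into the three types and compute the antichain count of each, obtaining the diagonal entries $2$, $3=2^0+2$, and $2^{\#S}+2$. The paper's sketch additionally records the off-diagonal entries of $W(M_k)$, but for the eigenvalue claim only the diagonal computation you give is needed.
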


\begin{proof}[Proof sketch]
The submonoids of $M_k$ are of the following forms:
\begin{itemize}
\item Type A submonoids are of the form $A_S = S\cup \{\bot,\top\}$ for $S\subseteq\{1,2,\ldots,k\}$.
\item Type B submonoids are of the form $B_i = \{\bot,i\}$ for $1\le i\le k$.
\item The only Type C submonoid is the trivial submonoid $C$.
\end{itemize}
One then checks the following computations by inspection:
\[
\begin{aligned}
    w(A_S,A_S) &= 2^{\#S}+2,\\
    w(A_S,A_T) &= 2^{\#T}+1\text{ for }T\subsetneq S,\\
    w(A_S,B_i) &= 3\text{ for }i\in S,\\
    w(A_S,C) &= 2,\\
    w(B_i,B_i) &= 3,\\
    w(B_i,C) &= 2,\\
    w(C,C) &= 2.
\end{aligned}
\]
Furthermore, for any other choices of $A,B\le M_k$, $w(A,B)=0$. This amounts to a complete computation of $W(M)$ and in particular gives the desired diagonal entries.
\end{proof}

Note that \autoref{fig:submon_graph} gives the weighted graph and matrix in the case $k=2$.

In order to explore the formula of \autoref{cor:fmla} we produced a short Sage Math script that computes $W(P)$, its diagonalization, and an eigenbasis. We have recorded some results in \autoref{sec:table}.

\subsection{Max-closed relations and poly-Bernoulli numbers}\label{subsec:pB}
In \cite{knuth:parades}, Don Knuth recounts the combinatorics of poly-Bernoulli numbers in terms of parades, acyclic bipartite orientations, doubly bounded permutations, lonesum matrices, and other structures. One may view the \textit{max-closed relations} he considers in \cite[\S14]{knuth:parades} as subsemigroups of $[m-1]\times [n-1]$. Since there are twice as many subsemigroups as submonoids (with two-to-one mapping given by $(-)\cup \bot$), we deduce that
\[
  \#\SubMon([m-1]\times[n-1]) = \frac{1}{2}B_{m,n},
\]
where $B_{m,n}$ is the $(m,n)$-th \textit{poly-Bernoulli number} \cite{kaneko}, with $B_{m,n} = B_n^{(-m)}$ and
\[
  \sum_{n\ge 0}B_n^{(s)}\frac{z^n}{n!} = \frac{1}{1-e^{-z}}\sum_{k\ge 1}\frac{(1-e^{-z})^k}{k^s}.
\]

\begin{theorem}[{\cite{kaneko,knuth:parades}}]
The poly-Bernoulli numbers may be expressed as
\[
  B_{m,n} = \sum_{k\ge 0}(-1)^{n+k}k!\stirling nk (k+1)^m = \sum_{k\ge 0}k!^2\stirling{m+1}{k+1}\stirling{n+1}{k+1}.
\]
Their exponential generating function is given by
\[
  \sum_{m,n\ge 0}B_{m,n}\frac{w^mz^n}{m!n!} = \frac{e^{w+z}}{e^w+e^z-e^{w+z}}.
\]
\hfill\qedsymbol
\end{theorem}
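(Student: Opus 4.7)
The plan is to begin from the defining generating function $\sum_{n\ge 0}B_n^{(s)}\frac{z^n}{n!} = \frac{1}{1-e^{-z}}\sum_{k\ge 1}\frac{(1-e^{-z})^k}{k^s}$ given just before the statement. Specializing to $s=-m$, cancelling one factor of $1-e^{-z}$, and reindexing $k\mapsto k+1$ gives
\[
  \sum_{n\ge 0}B_{m,n}\frac{z^n}{n!} = \sum_{k\ge 0}(k+1)^m(1-e^{-z})^k.
\]
From the classical identity $(e^z-1)^k=\sum_{n\ge k}k!\stirling{n}{k}\frac{z^n}{n!}$, substituting $z\mapsto -z$ and multiplying by $(-1)^k$ gives $(1-e^{-z})^k=\sum_{n\ge k}(-1)^{n+k}k!\stirling{n}{k}\frac{z^n}{n!}$. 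Extracting the coefficient of $z^n/n!$ from the displayed identity then yields the first claimed formula.

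For the exponential generating function, I multiply $\sum_n B_{m,n}\frac{z^n}{n!}=\sum_{k\ge 1}(1-e^{-z})^{k-1}k^m$ by $w^m/m!$ and sum over $m\ge 0$. The inner sum over $m$ produces $e^{kw}$, leaving the geometric series $\sum_{k\ge 1}(1-e^{-z})^{k-1}e^{kw} = e^w/(1-(1-e^{-z})e^w)$. Multiplying numerator and denominator by $e^z$ then gives $\frac{e^{w+z}}{e^w+e^z-e^{w+z}}$, the claimed EGF. The $(w,z)$-symmetry of this expression is a pleasant byproduct, implying $B_{m,n}=B_{n,m}$.

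The symmetric Stirling formula is the most intricate step. The key move is the change of variables $u=e^w-1$, $v=e^z-1$, under which $e^{w+z}=(1+u)(1+v)$ and a short algebraic simplification rewrites the denominator as $1-uv$, so that
\[
  F(w,z) = \frac{(1+u)(1+v)}{1-uv} = \sum_{k\ge 0} u^k(1+u)\cdot v^k(1+v)
\]
after expanding the geometric series. Combining $u^j=j!\sum_{m\ge j}\stirling{m}{j}\frac{w^m}{m!}$ with the Stirling recurrence $\stirling{m+1}{k+1}=(k+1)\stirling{m}{k+1}+\stirling{m}{k}$ yields
\[
  u^k(1+u) \;=\; u^k + u^{k+1} \;=\; \sum_{m\ge k}k!\stirling{m+1}{k+1}\frac{w^m}{m!},
\]
and analogously in $(v,z)$. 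Reading off the coefficient of $(w^m z^n)/(m!n!)$ produces $B_{m,n}=\sum_{k\ge 0}k!^2\stirling{m+1}{k+1}\stirling{n+1}{k+1}$.

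The principal obstacle, modest though it is, is locating the change of variables $u=e^w-1$, $v=e^z-1$: without it, the symmetric Stirling factorization is difficult to recognize. The rest of the argument reduces to careful bookkeeping with Stirling-number EGFs and geometric-series manipulations, together with one application of the standard recurrence for $\stirling{m+1}{k+1}$.
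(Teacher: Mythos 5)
Your proof is correct. The paper itself offers no argument for this theorem --- it is stated with a qed symbol and a citation to Kaneko and Knuth --- so there is no in-paper proof to compare against; your derivation is essentially the standard one found in those references. All three steps check out: specializing $s=-m$ in the Arakawa--Kaneko generating function and expanding $(1-e^{-z})^k$ via the Stirling EGF gives the first formula; summing over $m$ and telescoping the geometric series gives the closed-form EGF; and the substitution $u=e^w-1$, $v=e^z-1$ turns the denominator into $1-uv$, after which the Stirling recurrence $\stirling{m+1}{k+1}=(k+1)\stirling{m}{k+1}+\stirling{m}{k}$ identifies the coefficient of $w^m/m!$ in $u^k(1+u)$ as $k!\stirling{m+1}{k+1}$, yielding the symmetric form. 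One small bookkeeping remark: the series $\sum_{k\ge 1}(1-e^{-z})^{k-1}e^{kw}$ converges as a geometric series only for $|(1-e^{-z})e^w|<1$, but since all manipulations are valid in the ring of formal power series in $w,z$ (the inner sum over $k$ contributes to each fixed total degree only finitely often), this is harmless and requires no analytic justification.
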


From this, we deduce the following in the notation of \autoref{cor:fmla}.

\begin{theorem}\label{thm:m-by-n}
For $m\ge 0$ and $2\le j\le m+2$,
\[
  b_j(\chain m) = \frac{1}{2}(-1)^{m+j}j!\stirling{m+1}{j-1}.
\]
\hfill\qedsymbol
\end{theorem}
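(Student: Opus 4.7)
The plan is to match two expressions for $\#\SubMon(\chain m\times\chain n)$ as a linear combination of geometric progressions $j^n$ and read off the coefficients $b_j(\chain m)$ by uniqueness.

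First I would combine \autoref{cor:fmla} with \autoref{prop:mLambda}, which identifies $\Lambda(\chain m) = \{2,3,\ldots,m+2\}$, to obtain the expansion
\[
  \#\SubMon(\chain m\times\chain n) = \sum_{j=2}^{m+2} b_j(\chain m)\, j^n.
\]
Next I would invoke the translation between submonoids and max-closed subsemigroups recorded immediately before the theorem: $\#\SubMon(\chain m\times\chain n) = \tfrac12 B_{m+1,n+1}$.

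The heart of the argument is then to reorganize $B_{m+1,n+1}$ into a sum of the same shape $\sum_j c_j\, j^n$. For this I would exploit the symmetry $B_{m,n} = B_{n,m}$ (visible from the symmetric generating function $e^{w+z}/(e^w+e^z-e^{w+z})$) and apply the Stirling-number formula from the previous theorem with the roles of $m$ and $n$ swapped, yielding
\[
  B_{m+1,n+1} = \sum_{k\ge 0} (-1)^{m+1+k}\, k!\, \stirling{m+1}{k} (k+1)^{n+1}.
\]
Since $\stirling{m+1}{k} = 0$ for $k=0$ or $k>m+1$, the sum runs effectively over $1\le k\le m+1$. Substituting $j = k+1$ and using $(j-1)!\, j^{n+1} = j!\, j^n$ gives
\[
  B_{m+1,n+1} = \sum_{j=2}^{m+2} (-1)^{m+j}\, j!\, \stirling{m+1}{j-1}\, j^n.
\]

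Finally I would combine the three displays: the two expressions
\[
  \sum_{j=2}^{m+2} b_j(\chain m)\, j^n = \tfrac12 \sum_{j=2}^{m+2} (-1)^{m+j}\, j!\, \stirling{m+1}{j-1}\, j^n
\]
agree for every $n\ge 0$. Because the sequences $\{j^n\}_{n\ge 0}$ for distinct positive integers $j$ are linearly independent (this is the Vandermonde fact already used in \autoref{prop:bvals}), matching coefficients of $j^n$ yields the claimed formula for $b_j(\chain m)$. The only potential obstacle is bookkeeping with the index shifts and the small-$j$ boundary terms (making sure $\stirling{m+1}{0}$ vanishes so the sum really begins at $j=2$), but once the symmetry $B_{m,n}=B_{n,m}$ is invoked, everything falls into place without further computation.
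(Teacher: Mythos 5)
Your proof is correct, and it spells out the short deduction the paper implicitly signals by closing the theorem statement with a QED mark (the preceding sentence reads ``From this, we deduce the following in the notation of \autoref{cor:fmla}''). The steps match that implicit route exactly: \autoref{cor:fmla} together with \autoref{prop:mLambda} give the eigenvalue expansion over $\Lambda(\chain m)=\{2,\ldots,m+2\}$; the identification $\#\SubMon(\chain m\times\chain n)=\tfrac12 B_{m+1,n+1}$ is stated just before; the Kaneko/Knuth closed formula, combined with the symmetry $B_{m,n}=B_{n,m}$, rewrites $B_{m+1,n+1}$ as a combination of the sequences $j^n$; and linear independence of $(j^n)_{n\ge 0}$ for distinct $j$ (the same Vandermonde fact used in \autoref{prop:bvals}) identifies the coefficients. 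Your index bookkeeping, including the vanishing of the $j=1$ term via $\stirling{m+1}{0}=0$, is in order. The paper, however, then presents a second, explicitly labeled ``independent computation'' that takes a genuinely different route and does not use the poly-Bernoulli closed form at all: it writes down a candidate eigenmatrix $Q$ for $W(\chain m)$, verifies $(WQ)_{A,B}=(\#B+1)Q_{A,B}$ by induction on $\#A-\#B$, computes $Q^{-1}\mathbf{1}$ and $\mathbf{1}^{\top}Q$ in closed form, and finishes with the Stirling identity $\sum_{r_1+\cdots+r_j=m-j}\prod_{i} i^{r_i}=\stirling{m+1}{j-1}$. Your route is shorter and cleaner but imports Kaneko's formula as a black box; the paper's route is self-contained once $W(\chain m)$ is identified, and it has the additional payoff of exhibiting a closed form for the eigenmatrix $Q$ --- precisely the sort of structure one would hope to generalize beyond chains, in light of the open problem flagged in \autoref{rmk:interpretation-of-coeffs}.
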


\begin{remark}
The above formula was independently deduced in \cite[Theorem 4.7]{hmoo} using the language of saturated transfer systems (see \autoref{sec:matchstick}).
\end{remark}

We now give an independent computation of these coefficients using the technology of \autoref{cor:fmla}. The important point is that, once the matrix $W$ is identified, the problem is completely reduced to computations of linear algebra.

\begin{proof}[Proof of \autoref{thm:m-by-n}]
  The submonoids $A\leq [m]$ are exactly the subsets of $[m]$ containing $0$; the non-empty ideals of $A$ are in bijection with the elements of $A$ itself, via sending each ideal to its least element. Let $A, B \leq [m]$. By inspection we thus see that \[
    w(A,B) = \begin{cases}
      \#A + 1 & A = B \\
      \#\{a\in A: a\leq\min(A\setminus B)\} & B\subsetneq A \\
      0 & B\not\subseteq A.
    \end{cases}
  \]
  These entries define the matrix $W = W([m])$.

  We claim that the matrix \[
    Q_{A,B}([m]) = \begin{cases}
      (-1)^{\#A - \#B} \displaystyle\prod_{a\in A\setminus B} (1 + \#\{b\in B: b\leq a\}) & B\subseteq A \\
      0 & B\not\subseteq A
    \end{cases}
  \]
  is an eigenmatrix for $W$, satisfying that $Q^{-1}WQ$ is diagonal. We prove this by showing by induction on $\#A-\#B$ that $(WQ)_{A,B} = (\#B + 1)Q_{A,B}$ for all $B \subseteq A$. The base case corresponds to $B=A$ and follows directly from the definitions.
  
  Given $B\subsetneq A$, let $a_0 = \min(A\setminus B)$, and let $r = 1 + \#\{b\in B: b\leq a_0\}$. Then for $B\subseteq C\subseteq A\setminus \{a_0\}$, we have \[
    W_{A,C} = W_{A,B}=r,\quad W_{A, C\sqcup\{a_0\}} = W_{A\setminus \{a_0\}, C} + 1,\quad Q_{C\sqcup\{a_0\},B} = -rQ_{C,B}.
  \]
  Splitting the sum $(WQ)_{A,B} = \sum_{B\subseteq C\subseteq A} W_{A,C}Q_{C,B}$ into the cases $a_0\in C$ and $a_0\notin C$, applying the above computations, and using the inductive hypothesis gives the inductive step. Thus $Q$ is an eigenmatrix.

  We now wish to compute $Q^{-1}\mathbf{1}$ and $\mathbf{1}^TQ$. We first claim that $(Q^{-1}\mathbf{1})_A = \frac{1}{2}(\#A + 1)!$.
  This follows from the fact that $\sum_{B\subseteq A} \frac{1}{2}(\#B + 1)!Q_{A,B} = 1$. The latter is proved by induction on $\#A$, by splitting the sum according to whether $B$ contains $\max(A)$ or not.
  
  Let $r_i$ denote the size of the ``$i$-th run'' in $[m]\setminus B$. Explicitly, this means that, if $B = \{0 = b_1 < \dots < b_{\#B}\}$ and $b_{\# B + 1} = m + 1$, we let $r_i = b_{i + 1} - b_i - 1$. Then $r_i$ is the number of choices of $x\in [m]\setminus B$ so that $1 + \#\{b\in B: b\leq x\} = i$. Thus \[
    (\mathbf{1}^TQ)_B = \sum_{A\supseteq B} Q_{A,B} = (-1)^{m + 1 - \#B}\sum_{x\in [m]\setminus B} (1 + \#\{b\in B: b\leq x\}) = (-1)^{m + 1 - \#B} \prod_{i=1}^{\# B} i^{r_i}.
  \]

  Now, \[
    b_{j}([m]) = \sum_{\#A = j - 1} (\mathbf{1}^TQ)_A(Q^{-1}\mathbf{1})_A = \frac{j!}{2}(-1)^{m - j} \sum_{r_1 + \dots + r_j = m - j} \prod_{i=1}^{j - 1} i^{r_i}.
  \] We conclude \autoref{thm:m-by-n} thanks to the standard identity \[
    \sum_{r_1 + \dots + r_j = m - j} \prod_{i=1}^{j - 1} i^{r_i} = \stirling{m + 1}{j - 1};
  \] see for instance \cite[Equation 26.8.5]{nist-handbook}.
\end{proof}

We now consider the multivariable exponential generating function for the number of submonoids of $\chain \ell\times\chain m\times\chain n$. We define
\[
  G(x,y,z) := \sum_{\ell,m,n\ge 0}\#\SubMon(\chain{\ell-1}\times\chain{m-1}\times\chain{n-1})\frac{x^\ell y^mz^n}{\ell!m!n!}
\]
and also set
\[
  F(t,u,v) = G(x,y,z)
\]
where $t = e^x,u=e^y,v=e^z$. Because $\#\SubMon(\chain{\ell-1}\times\chain{m-1})$ is equal to the poly-Bernoulli number $B_{\ell,m}$, we have
\[
  F(t,u,1) = tu\qquad\text{and}\qquad \left.\frac{\partial F}{\partial v}\right|_{(t,u,1)} = \frac{tu}{t+u-tu}.
\]
By \autoref{cor:fmla}, we also have
\[
  \left.\frac{\partial G}{\partial^\ell x \partial^m y}\right|_{(0,0,z)} = \sum_{\lambda\in \Lambda(\chain{\ell -1}\times\chain{m-1})}b_\lambda e^{\lambda z}.
\]
This partial information is consistent with the following conjecture.

\begin{conjecture}
The generating function $G(x,y,z)$ is a rational function in $e^x,e^y,e^z$, as is the higher order multivariable version for $\#\SubMon(\prod_i \chain{n_i-1})$.
\end{conjecture}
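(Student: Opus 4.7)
Since the statement is a conjecture, the proof plan is necessarily speculative; here is the natural line of attack suggested by the rest of the paper. \autoref{cor:fmla} combined with \autoref{thm:diag} gives, for each fixed pair $(\ell,m)$, an exact finite expansion
\[
\#\SubMon(\chain{\ell-1}\times\chain{m-1}\times\chain{n-1}) = \sum_{\lambda\in \Lambda(\chain{\ell-1}\times\chain{m-1})} b_\lambda \lambda^{n-1},
\]
and because every eigenvalue $\lambda$ is a positive integer, the EGF in $z$ alone collapses to a polynomial in $v = e^z$. Thus $G(x,y,z)$ already lives, as a formal object, in $\QQ[[x,y]][e^z]$, and the conjecture becomes the assertion that it actually lies in $\QQ(e^x,e^y,e^z)$. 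The 2-variable shadow --- the identity $\sum_{m,n}B_{m,n}w^mz^n/(m!n!) = e^{w+z}/(e^w+e^z-e^{w+z})$ --- makes the guess extremely natural, and already pins down $G(x,y,0)$ and $(\partial G/\partial z)(x,y,0)$.

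The strategy I would try first is to promote this spectral description into a closed multivariate formula. Concretely, I would seek a ``triple Stirling'' identity analogous to Kaneko's
\[
B_{m,n} = \sum_{k\geq 0}k!^2\stirling{m+1}{k+1}\stirling{n+1}{k+1},
\]
so that summation over $\ell,m,n$ factors through the standard identity $\sum_{n\geq 0}\stirling{n+1}{k+1}x^{n+1}/(n+1)! = (e^x-1)^{k+1}/(k+1)!$ and yields rationality in $e^x,e^y,e^z$ automatically. A complementary tactic is to postulate a symmetric closed form $P(e^x,e^y,e^z)/Q(e^x,e^y,e^z)$ and pin down $P,Q$ by matching coefficients against small cases computable via \autoref{thm:walks}. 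A third route is to derive a master functional equation --- for example, a linear PDE in $z$ with coefficients rational in $e^x,e^y$ --- for $G$ by slicing 3D submonoids along the third coordinate in the spirit of \autoref{lem:upclosed-recurrence}, but packaging the $\ell,m$ direction inside a bivariate EGF rather than a fixed transfer matrix. Any of these, once carried out in three variables, should extend to the $k$-variable case by the same inductive ``one coordinate at a time'' slicing.

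The main obstacle is that the transfer matrix $W(\chain{\ell-1}\times\chain{m-1})$ has size $\tfrac{1}{2}B_{\ell,m}$, which grows super-exponentially, so there is no single finite-dimensional operator whose exponentiation captures the entire 3-variable generating function simultaneously. In the 2-variable case this is bypassed by the bijection between max-closed relations and lonesum matrices, which secretly supplies the Stirling decomposition above; no comparably clean encoding for 3D max-closed sets is known, and Knuth \cite{knuth:parades} explicitly flags the enumeration of $\SubSemi(\chain{8}^3)$ as beyond current combinatorial techniques. A successful proof is therefore most likely to require either genuinely new combinatorics of 3D max-closed subsets or an indirect structural argument, such as showing that $G$ is holonomic in $e^x,e^y,e^z$ and then identifying the annihilating operator, from which rationality would follow via specialization.
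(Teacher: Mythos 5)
This statement is a conjecture; the paper offers no proof of it, only the consistency checks you also cite (the specializations $F(t,u,1)=tu$ and $\partial F/\partial v|_{(t,u,1)}=tu/(t+u-tu)$ coming from the Kaneko--Knuth generating function, and the fact that each $(\ell,m)$-slice is an exponential polynomial in $z$ by \autoref{cor:fmla}). You correctly recognize this and do not claim a proof, so there is nothing to fault on correctness; your proposed lines of attack (a triple Stirling identity, an ansatz matched against small cases, a slicing functional equation) are reasonable and your identification of the obstruction --- that the transfer matrix $W(\chain{\ell-1}\times\chain{m-1})$ has size $\tfrac12 B_{\ell,m}$ and that no three-dimensional analogue of the lonesum-matrix encoding is known --- matches the paper's own discussion in \autoref{rmk:oeis}. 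One small imprecision: since the top eigenvalue of $W(\chain{\ell-1}\times\chain{m-1})$ (the number of antichains) grows without bound in $\ell,m$, the coefficientwise polynomials in $e^z$ have unbounded degree, so $G$ does not literally lie in $\QQ[[x,y]][e^z]$; the correct statement is that each coefficient of $x^\ell y^m$ is a polynomial in $e^z$. This does not affect your plan, but it is exactly the reason no single finite-dimensional operator controls all three variables at once, as you note.
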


An explicit formula for $F(t,u,v)$ (and hence, for $G(x,y,z)$) and its higher order variants would give a very satisfying answer to the most general version of \cite[Problem 10]{knuth:parades}. 

\begin{remark}\label{rmk:oeis}
The transfer matrix method seems to fall short in enumerating submonoids of $\prod \chain{n_i}$ because it does not sufficiently account for symmetry. We also note that \cite[Remark 2.10]{echt-2023} establishes that submonoids of $\chain{1}^n$ are counted by OEIS entry A102896, which is only known for $n\le 7$, thus suggesting the difficulty inherent in establishing base cases in an inductive approach to enumerating $\SubMon \prod \chain{n_i}$.
\end{remark}

\section{Connection to equivariant stable homotopy theory}\label{sec:matchstick}

In this section we elaborate on the connection between submonoids of a semilattice and saturated transfer systems, which are important objects of study in equivariant stable homotopy theory, as they are closely related to equivariant operads. For more context, we refer the reader to \cite{hmoo, Rubin, notices_transfer}.

\begin{defn}
    Let $(P,\leq )$ be a finite lattice. A \emph{saturated transfer system} on $P$ is a partial order $R$ on $P$ satisfying the following conditions:
    \begin{enumerate}\label{defn:sattransfer}
        \item $R$ refines $\leq$, \emph{i.e.}, if $xRy$, then $x\leq y$;
        \item $R$ is closed under restriction, \emph{i.e.}, if $xRz$ and $y\in P$, then $(x \wedge y)R(z \wedge y)$;
        \item $R$ is decomposable, \emph{i.e.}, if $xRz$ and $x\leq y\leq z$, then $xRy$ and $xRz$.
    \end{enumerate} 
    We denote by $\SatTr(P)$ the set of saturated transfer systems of $P$. This is a poset by refinement (\emph{i.e.}, inclusion).
\end{defn}

Condition (3) is called the saturation condition, a partial order that satisfies (1) and (2) is called a transfer system.

We can consider a saturated transfer system in terms of its Hasse diagram, and as such, we can consider the connected components. Proposition 2.1 of \cite{echt-2023} shows that each connected component has a unique minimal element. The following result combines \cite[Theorem 2.8]{echt-2023} and \cite[Theorem 4.5]{mrc-ormsby}. 

\begin{theorem}\label{thm:bij-sattr-subm}
    There is an order reversing bijection 
    \[\chi\colon \SatTr(P) \to \SubMon(P,\vee)\]
    given by taking a saturated transfer system $R$ to the set minimal elements of the connected components with respect to $R$.
\end{theorem}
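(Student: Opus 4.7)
The plan is to construct an explicit inverse $\psi\colon \SubMon(P,\vee)\to \SatTr(P)$, verify bijectivity via a structural lemma characterizing saturated transfer systems by their connected components, and then deduce order-reversal. To see $\chi$ is well-defined, I use \cite[Proposition 2.1]{echt-2023} to assume each component of $R\in\SatTr(P)$ has a unique minimum; writing $A=\chi(R)$, the bottom $\bot$ of $P$ (the identity of $(P,\vee)$) lies in $A$ as the minimum of its own component, and for $a_1,a_2\in A$ I would show $a_1\vee a_2\in A$ by contradiction: if $b<a_1\vee a_2$ satisfies $bR(a_1\vee a_2)$, then restriction with $y=a_i$ yields $(b\wedge a_i)Ra_i$, so minimality of $a_i$ forces $a_i\le b$ for $i=1,2$, contradicting $b<a_1\vee a_2$.

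For the inverse, given $A\le (P,\vee)$, I would define $\phi_A\colon P\to A$ by $\phi_A(x) := \bigvee\{a\in A\mid a\le x\}$ (well-defined since $A\ni\bot$ is closed under $\vee$) and set $\psi(A)=R_A$ where $xR_A y$ iff $x\le y$ and $\phi_A(x)=\phi_A(y)$. Refinement and decomposability are immediate, and restriction reduces to the observation that if $\phi_A(x)=\phi_A(z)$, then $\phi_A(z\wedge y)\in A$ is bounded above by $\phi_A(z)=\phi_A(x)\le x$ and by $y$, hence by $x\wedge y$, giving $\phi_A(z\wedge y)\le \phi_A(x\wedge y)$; the reverse inequality is monotonicity of $\phi_A$. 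The components of $R_A$ are then the level sets $\phi_A^{-1}(a)$ with minima $a\in A$, so $\chi(\psi(A))=A$.

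The reverse identity $\psi(\chi(R))=R$ is the technical heart of the argument, and rests on a structural lemma that for $R\in\SatTr(P)$, $xRy$ if and only if $x\le y$ and $x,y$ lie in the same component of $R$. The key ingredient is that if $m$ is the minimum of the component of $x$, then $mRx$, which I would prove by induction on the length of a zig-zag $R$-path $m=x_0,\ldots,x_k=x$: an ascending step $x_{k-1}Rx_k$ uses transitivity of $R$, while a descending step $x_kRx_{k-1}$ combines the hypothesis $mRx_{k-1}$ with the chain $m\le x_k\le x_{k-1}$ to yield $mRx_k$ by saturation. The lemma then follows from one more application of saturation. Setting $A=\chi(R)$, I would conclude $\phi_A(x)=m(x)$ for all $x$: the inequality $m(x)\le\phi_A(x)$ is clear since $m(x)\in A$ and $m(x)\le x$, and conversely $m(x)Rx$ together with $m(x)\le\phi_A(x)\le x$ gives $m(x)R\phi_A(x)$ by saturation, placing $\phi_A(x)$ in the component of $x$ and forcing $\phi_A(x)=m(x)$ since $\phi_A(x)\in A$ is itself a component minimum. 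Hence $R_A=R$.

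Finally, order-reversal is immediate once the bijection is established: if $R\subseteq R'$ then every $R'$-component is a union of $R$-components, so the minimum of any $R'$-component is also the minimum of some $R$-component, giving $\chi(R')\subseteq \chi(R)$. The main obstacle is the structural lemma characterizing $R$ by its connected components, and within it the induction showing $mRx$; everything else is a direct translation between component minima and the operator $\phi_A$.
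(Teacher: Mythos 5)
Your proof is correct. The paper itself gives no proof of this statement --- it simply records it as a combination of \cite[Theorem~2.8]{echt-2023} (an order-reversing bijection $\SatTr(P)\cong\End^\circ(P)$, the poset of interior operators on $P$, i.e.\ idempotent, contractive, order-preserving endomorphisms) with \cite[Theorem~4.5]{mrc-ormsby} (a bijection $\End^\circ(P)\cong\SubMon(P,\vee)$). What you have done is unfold that composite into a single self-contained argument: your operator $\phi_A(x)=\bigvee\{a\in A\mid a\le x\}$ is exactly the interior operator associated to the submonoid $A$, and your relation $R_A$ (``$x\le y$ and $\phi_A(x)=\phi_A(y)$'') is exactly the saturated transfer system associated to an interior operator in \cite{echt-2023}. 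So the route is substantively the same one the paper delegates to its references, just made explicit, with the technical core being your structural lemma that $xRy$ iff $x\le y$ and $x,y$ share an $R$-component, proved by the zig-zag induction showing the component minimum $m$ satisfies $mRx$.

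One small organizational remark: in your well-definedness step for $\chi$ you show $a_1\vee a_2$ has no proper $R$-predecessor, and you need the uniqueness of minimal elements from \cite[Proposition~2.1]{echt-2023} (or your own structural lemma, which logically precedes everything else and should be stated first) to conclude that a locally $R$-minimal element is in fact the unique minimum of its component. With that ordering clarified, the argument is complete and correct, including the verification that the inverse bijection $\psi$ is also order-reversing.
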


In fact, \cite{echt-2023} shows an order reversing bijection between $\SatTr(P)$ and $\End^\circ(P)$, where the latter denotes the set of interior operators of $P$, that is order preserving maps that are idempotent (for all $x\in P$, $f(f(x))=f(x)$) and contractive (for all $x\in P$, $f(x)\leq x$). This is a poset using the pointwise partial order, where $f\leq g$ is and only if $f(x)\leq g(x)$ for all $x\in P$. The result in \cite{mrc-ormsby} connects $\End^\circ(P)$ with $\SubMon(P,\vee)$.

Via this bijection, we can translate each of the enumerative and asymptotic results of \autoref{sec:semilattices} to give analogous results for the count of saturated transfer systems on $P\times [n]$. In the rest of this section, we show how to obtain the transition matrix directly from the data of saturated transfer systems. We present independent proofs of the enumeration techniques, as these ideas might be of interest to those studying transfer systems.

\begin{remark}
The authors actually discovered the results for transfer systems on finite lattices first, and only later realized that they could be generalized to submonoids of finite commutative monoids. We chose to emphasize submonoids in our treatment since they are both more general and more familiar.
\end{remark}

\subsection{Saturated transfer systems on $P\times [n]$}

Let $P$ be a finite lattice. We are interested in saturated transfer
systems $R$ on $P\times \chain{n}$. It will be convenient to have some language
to discuss a given system of saturated covers on this lattice. The below definition is illustrated in \autoref{fig:layer}.

\begin{defn}\label{defn:layer}
  Given a saturated transfer system $S$ on $P\times \chain{n}$, for $i=0,\dots,n$, we define the \emph{$i$-th layer}, denoted by $S_i$, as the restriction of $S$ to $P\times \{i\}$. We also write $S_i$ for the corresponding partial order on $P$ obtained via the isomorphism $P\times \{i\}\cong P$. Note that $S_i$ is indeed a saturated transfer system on $P$. For $i=1,\dots,n$, we write $S^i$ for the restriction of $S$ to the height one cylinder $P\times \{i-1<i\}$. Note that since $P\times \{i-1<i\}$ is isomorphic to $P\times [1]$, we can think of $S^i$ as a saturated transfer system on $P\times [1]$, with bottom and top layers given by $S_{i-1}$ and $S_i$, respectively.
\end{defn}

\tikzset{every picture/.style={line width=0.75pt}} 

\begin{figure}[h]
\begin{tikzpicture}[x=0.75pt,y=0.75pt,yscale=-1,xscale=1]

\draw[dotted]    (202.82,208.5) -- (205.36,242.33) ;
\draw[dotted]     (221.41,226.24) -- (223.11,329.36) ;
\draw[dotted]     (254.36,261.73) -- (254.79,293.03) ;
\draw[dotted]     (287.74,260.88) -- (289.86,327.67) ;
\draw[dotted]     (271.26,207.65) -- (272.96,310.77) ;
\draw[dotted]     (237.46,206.81) -- (239.16,309.93) ;
\draw[dotted]     (287.74,226.24) -- (220.99,227.93) ;
\draw[dotted]     (271.26,207.65) -- (204.51,209.34) ;
\draw[dotted]     (272.96,310.77) -- (206.21,312.46) ;
\draw[dotted]     (289.86,258.38) -- (223.11,260.07) ;
\draw[dotted]     (271.27,275.28) -- (204.52,276.97) ;
\draw[dotted]     (272.11,240.64) -- (205.36,242.33) ;
\draw[dotted]     (223.11,329.36) -- (206.21,312.46) ;
\draw[dotted]     (220.99,227.93) -- (204.09,211.03) ;
\draw[dotted]     (288.16,224.55) -- (271.26,207.65) ;
\draw[dotted]     (254.78,225.4) -- (237.89,208.5) ;
\draw[dotted]     (289.86,327.67) -- (272.96,310.77) ;
\draw[dotted]     (254.79,293.93) -- (237.89,277.03) ;
\draw[dotted]     (256.48,259.23) -- (239.58,242.33) ;
\draw [color={rgb, 255:red, 208; green, 2; blue, 27 }  ,draw opacity=1 ]   (287.74,226.24) -- (287.74,260.88) ;
\draw [color={rgb, 255:red, 208; green, 2; blue, 27 }  ,draw opacity=1 ]   (289.86,292.18) -- (289.86,326.83) ;
\draw [color={rgb, 255:red, 208; green, 2; blue, 27 }  ,draw opacity=1 ]   (205.36,242.33) -- (206.21,312.46) ;
\draw [color={rgb, 255:red, 208; green, 2; blue, 27 }  ,draw opacity=1 ]   (206.21,312.46) -- (213.79,320.05) -- (223.11,329.36) ;
\draw [color={rgb, 255:red, 208; green, 2; blue, 27 }  ,draw opacity=1 ]   (289.86,326.83) -- (223.11,329.36) ;
\draw [color={rgb, 255:red, 208; green, 2; blue, 27 }  ,draw opacity=1 ]   (254.79,292.61) -- (256.06,330.21) ;
\draw [color={rgb, 255:red, 208; green, 2; blue, 27 }  ,draw opacity=1 ]   (254.36,227.09) -- (254.36,261.73) ;
\draw [color={rgb, 255:red, 208; green, 2; blue, 27 }  ,draw opacity=1 ]   (220.99,227.93) -- (223.11,329.36) ;
\draw [color={rgb, 255:red, 208; green, 2; blue, 27 }  ,draw opacity=1 ]   (204.52,276.97) -- (221.42,293.87) ;
\draw [color={rgb, 255:red, 208; green, 2; blue, 27 }  ,draw opacity=1 ]   (205.36,242.33) -- (222.26,259.23) ;
\draw [color={rgb, 255:red, 208; green, 2; blue, 27 }  ,draw opacity=1 ]   (239.58,311.62) -- (206.21,312.46) ;
\draw [color={rgb, 255:red, 208; green, 2; blue, 27 }  ,draw opacity=1 ]   (240.01,309.93) -- (256.48,328.52) ;
\draw [color={rgb, 255:red, 208; green, 2; blue, 27 }  ,draw opacity=1 ]   (288.17,291.34) -- (221.42,293.87) ;
\draw [color={rgb, 255:red, 208; green, 2; blue, 27 }  ,draw opacity=1 ]   (289.86,292.18) -- (223.11,329.36) ;
\draw [color={rgb, 255:red, 208; green, 2; blue, 27 }  ,draw opacity=1 ]   (205.79,277.4) -- (223.11,329.36) ;
\draw [color={rgb, 255:red, 208; green, 2; blue, 27 }  ,draw opacity=1 ]   (205.36,242.33) -- (223.11,329.36) ;
\draw [color={rgb, 255:red, 208; green, 2; blue, 27 }  ,draw opacity=1 ]   (239.58,311.19) -- (223.11,329.36) ;
\draw [color={rgb, 255:red, 208; green, 2; blue, 27 }  ,draw opacity=1 ]   (223.11,329.36) .. controls (254.59,337.9) and (263.04,338.75) .. (289.86,327.67) ;
\draw [color={rgb, 255:red, 208; green, 2; blue, 27 }  ,draw opacity=1 ]   (256.48,328.52) .. controls (278.02,316.88) and (278.02,312.66) .. (289.86,292.18) ;
\draw [color={rgb, 255:red, 208; green, 2; blue, 27 }  ,draw opacity=1 ]   (223.11,329.36) .. controls (244.65,317.73) and (244.65,313.5) .. (256.48,293.03) ;
\draw [color={rgb, 255:red, 208; green, 2; blue, 27 }  ,draw opacity=1 ]   (205.36,242.33) .. controls (190.99,267.87) and (190.99,294.07) .. (206.21,312.46) ;
\draw [color={rgb, 255:red, 208; green, 2; blue, 27 }  ,draw opacity=1 ]   (220.99,227.93) .. controls (227.32,253.56) and (231.55,271.31) .. (221.42,293.87) ;
\draw [color={rgb, 255:red, 208; green, 2; blue, 27 }  ,draw opacity=1 ]   (220.99,227.93) .. controls (243.38,247.65) and (238.31,303.36) .. (223.11,329.36) ;
\draw [color={rgb, 255:red, 208; green, 2; blue, 27 }  ,draw opacity=1 ]   (205.36,242.33) -- (221.42,293.87) ;
\draw [color={rgb, 255:red, 208; green, 2; blue, 27 }  ,draw opacity=1 ]   (223.11,293.87) .. controls (255.21,278.86) and (263.66,283.08) .. (289.86,292.18) ;
\draw [color={rgb, 255:red, 208; green, 2; blue, 27 }  ,draw opacity=1 ]   (222.26,259.23) .. controls (228.6,284.86) and (232.82,302.6) .. (222.69,325.17) ;
\draw[dotted]     (289.86,292.18) -- (272.96,275.28) ;
\draw  [fill={rgb, 255:red, 74; green, 74; blue, 74 }  ,fill opacity=1 ] (224.43,329.36) .. controls (224.43,328.63) and (223.84,328.04) .. (223.11,328.04) .. controls (222.38,328.04) and (221.79,328.63) .. (221.79,329.36) .. controls (221.79,330.09) and (222.38,330.68) .. (223.11,330.68) .. controls (223.84,330.68) and (224.43,330.09) .. (224.43,329.36) -- cycle ;
\draw  [fill={rgb, 255:red, 74; green, 74; blue, 74 }  ,fill opacity=1 ] (289.86,326.83) .. controls (289.86,326.1) and (289.27,325.51) .. (288.54,325.51) .. controls (287.81,325.51) and (287.22,326.1) .. (287.22,326.83) .. controls (287.22,327.56) and (287.81,328.15) .. (288.54,328.15) .. controls (289.27,328.15) and (289.86,327.56) .. (289.86,326.83) -- cycle ;
\draw  [fill={rgb, 255:red, 74; green, 74; blue, 74 }  ,fill opacity=1 ] (257.38,328.89) .. controls (257.38,328.16) and (256.79,327.56) .. (256.06,327.56) .. controls (255.33,327.56) and (254.74,328.16) .. (254.74,328.89) .. controls (254.74,329.61) and (255.33,330.21) .. (256.06,330.21) .. controls (256.79,330.21) and (257.38,329.61) .. (257.38,328.89) -- cycle ;
\draw  [fill={rgb, 255:red, 74; green, 74; blue, 74 }  ,fill opacity=1 ] (275.6,310.77) .. controls (275.6,310.04) and (275.01,309.45) .. (274.28,309.45) .. controls (273.55,309.45) and (272.96,310.04) .. (272.96,310.77) .. controls (272.96,311.5) and (273.55,312.09) .. (274.28,312.09) .. controls (275.01,312.09) and (275.6,311.5) .. (275.6,310.77) -- cycle ;
\draw  [fill={rgb, 255:red, 74; green, 74; blue, 74 }  ,fill opacity=1 ] (242.22,311.62) .. controls (242.22,310.89) and (241.63,310.3) .. (240.9,310.3) .. controls (240.17,310.3) and (239.58,310.89) .. (239.58,311.62) .. controls (239.58,312.35) and (240.17,312.94) .. (240.9,312.94) .. controls (241.63,312.94) and (242.22,312.35) .. (242.22,311.62) -- cycle ;
\draw  [fill={rgb, 255:red, 74; green, 74; blue, 74 }  ,fill opacity=1 ] (207.53,313.78) .. controls (207.53,313.05) and (206.94,312.46) .. (206.21,312.46) .. controls (205.48,312.46) and (204.89,313.05) .. (204.89,313.78) .. controls (204.89,314.51) and (205.48,315.1) .. (206.21,315.1) .. controls (206.94,315.1) and (207.53,314.51) .. (207.53,313.78) -- cycle ;
\draw  [fill={rgb, 255:red, 74; green, 74; blue, 74 }  ,fill opacity=1 ] (290.12,292.96) .. controls (290.12,292.23) and (289.53,291.64) .. (288.8,291.64) .. controls (288.07,291.64) and (287.48,292.23) .. (287.48,292.96) .. controls (287.48,293.69) and (288.07,294.28) .. (288.8,294.28) .. controls (289.53,294.28) and (290.12,293.69) .. (290.12,292.96) -- cycle ;
\draw  [fill={rgb, 255:red, 74; green, 74; blue, 74 }  ,fill opacity=1 ] (224.06,293.87) .. controls (224.06,293.14) and (223.47,292.55) .. (222.74,292.55) .. controls (222.01,292.55) and (221.42,293.14) .. (221.42,293.87) .. controls (221.42,294.6) and (222.01,295.19) .. (222.74,295.19) .. controls (223.47,295.19) and (224.06,294.6) .. (224.06,293.87) -- cycle ;
\draw  [fill={rgb, 255:red, 74; green, 74; blue, 74 }  ,fill opacity=1 ] (256.11,293.93) .. controls (256.11,293.2) and (255.52,292.61) .. (254.79,292.61) .. controls (254.06,292.61) and (253.47,293.2) .. (253.47,293.93) .. controls (253.47,294.66) and (254.06,295.25) .. (254.79,295.25) .. controls (255.52,295.25) and (256.11,294.66) .. (256.11,293.93) -- cycle ;
\draw  [fill={rgb, 255:red, 74; green, 74; blue, 74 }  ,fill opacity=1 ] (289.06,259.56) .. controls (289.06,258.83) and (288.47,258.24) .. (287.74,258.24) .. controls (287.01,258.24) and (286.42,258.83) .. (286.42,259.56) .. controls (286.42,260.29) and (287.01,260.88) .. (287.74,260.88) .. controls (288.47,260.88) and (289.06,260.29) .. (289.06,259.56) -- cycle ;
\draw  [fill={rgb, 255:red, 74; green, 74; blue, 74 }  ,fill opacity=1 ] (239.21,277.45) .. controls (239.21,276.72) and (238.62,276.13) .. (237.89,276.13) .. controls (237.16,276.13) and (236.57,276.72) .. (236.57,277.45) .. controls (236.57,278.18) and (237.16,278.77) .. (237.89,278.77) .. controls (238.62,278.77) and (239.21,278.18) .. (239.21,277.45) -- cycle ;
\draw  [fill={rgb, 255:red, 74; green, 74; blue, 74 }  ,fill opacity=1 ] (207.11,278.72) .. controls (207.11,277.99) and (206.51,277.4) .. (205.79,277.4) .. controls (205.06,277.4) and (204.46,277.99) .. (204.46,278.72) .. controls (204.46,279.45) and (205.06,280.04) .. (205.79,280.04) .. controls (206.51,280.04) and (207.11,279.45) .. (207.11,278.72) -- cycle ;
\draw  [fill={rgb, 255:red, 74; green, 74; blue, 74 }  ,fill opacity=1 ] (224.43,261.39) .. controls (224.43,260.67) and (223.84,260.07) .. (223.11,260.07) .. controls (222.38,260.07) and (221.79,260.67) .. (221.79,261.39) .. controls (221.79,262.12) and (222.38,262.72) .. (223.11,262.72) .. controls (223.84,262.72) and (224.43,262.12) .. (224.43,261.39) -- cycle ;
\draw  [fill={rgb, 255:red, 74; green, 74; blue, 74 }  ,fill opacity=1 ] (274.28,276.6) .. controls (274.28,275.87) and (273.69,275.28) .. (272.96,275.28) .. controls (272.23,275.28) and (271.64,275.87) .. (271.64,276.6) .. controls (271.64,277.33) and (272.23,277.92) .. (272.96,277.92) .. controls (273.69,277.92) and (274.28,277.33) .. (274.28,276.6) -- cycle ;
\draw  [fill={rgb, 255:red, 74; green, 74; blue, 74 }  ,fill opacity=1 ] (257.8,260.55) .. controls (257.8,259.82) and (257.21,259.23) .. (256.48,259.23) .. controls (255.75,259.23) and (255.16,259.82) .. (255.16,260.55) .. controls (255.16,261.28) and (255.75,261.87) .. (256.48,261.87) .. controls (257.21,261.87) and (257.8,261.28) .. (257.8,260.55) -- cycle ;
\draw  [fill={rgb, 255:red, 74; green, 74; blue, 74 }  ,fill opacity=1 ] (208,242.33) .. controls (208,241.6) and (207.41,241.01) .. (206.68,241.01) .. controls (205.95,241.01) and (205.36,241.6) .. (205.36,242.33) .. controls (205.36,243.06) and (205.95,243.65) .. (206.68,243.65) .. controls (207.41,243.65) and (208,243.06) .. (208,242.33) -- cycle ;
\draw  [fill={rgb, 255:red, 74; green, 74; blue, 74 }  ,fill opacity=1 ] (240.06,242.81) .. controls (240.06,242.08) and (239.47,241.48) .. (238.74,241.48) .. controls (238.01,241.48) and (237.42,242.08) .. (237.42,242.81) .. controls (237.42,243.53) and (238.01,244.13) .. (238.74,244.13) .. controls (239.47,244.13) and (240.06,243.53) .. (240.06,242.81) -- cycle ;
\draw  [fill={rgb, 255:red, 74; green, 74; blue, 74 }  ,fill opacity=1 ] (268.73,241.34) .. controls (268.73,242.51) and (269.67,243.46) .. (270.84,243.46) .. controls (272.01,243.46) and (272.95,242.51) .. (272.95,241.34) .. controls (272.95,240.18) and (272.01,239.23) .. (270.84,239.23) .. controls (269.67,239.23) and (268.73,240.18) .. (268.73,241.34) -- cycle ;
\draw[dotted]     (287.74,258.24) -- (270.84,241.34) ;
\draw  [fill={rgb, 255:red, 74; green, 74; blue, 74 }  ,fill opacity=1 ] (287.74,226.24) .. controls (287.74,227) and (288.36,227.62) .. (289.12,227.62) .. controls (289.88,227.62) and (290.5,227) .. (290.5,226.24) .. controls (290.5,225.48) and (289.88,224.86) .. (289.12,224.86) .. controls (288.36,224.86) and (287.74,225.48) .. (287.74,226.24) -- cycle ;
\draw  [fill={rgb, 255:red, 74; green, 74; blue, 74 }  ,fill opacity=1 ] (254.36,227.09) .. controls (254.36,227.85) and (254.98,228.47) .. (255.74,228.47) .. controls (256.51,228.47) and (257.12,227.85) .. (257.12,227.09) .. controls (257.12,226.32) and (256.51,225.71) .. (255.74,225.71) .. controls (254.98,225.71) and (254.36,226.32) .. (254.36,227.09) -- cycle ;
\draw  [fill={rgb, 255:red, 74; green, 74; blue, 74 }  ,fill opacity=1 ] (201.44,209.88) .. controls (201.44,210.64) and (202.06,211.26) .. (202.82,211.26) .. controls (203.58,211.26) and (204.2,210.64) .. (204.2,209.88) .. controls (204.2,209.12) and (203.58,208.5) .. (202.82,208.5) .. controls (202.06,208.5) and (201.44,209.12) .. (201.44,209.88) -- cycle ;
\draw  [fill={rgb, 255:red, 74; green, 74; blue, 74 }  ,fill opacity=1 ] (220.99,227.93) .. controls (220.99,228.69) and (221.6,229.31) .. (222.37,229.31) .. controls (223.13,229.31) and (223.75,228.69) .. (223.75,227.93) .. controls (223.75,227.17) and (223.13,226.55) .. (222.37,226.55) .. controls (221.6,226.55) and (220.99,227.17) .. (220.99,227.93) -- cycle ;
\draw  [fill={rgb, 255:red, 74; green, 74; blue, 74 }  ,fill opacity=1 ] (269.88,209.03) .. controls (269.88,209.8) and (270.5,210.41) .. (271.26,210.41) .. controls (272.02,210.41) and (272.64,209.8) .. (272.64,209.03) .. controls (272.64,208.27) and (272.02,207.65) .. (271.26,207.65) .. controls (270.5,207.65) and (269.88,208.27) .. (269.88,209.03) -- cycle ;
\draw  [fill={rgb, 255:red, 74; green, 74; blue, 74 }  ,fill opacity=1 ] (237.89,208.5) .. controls (237.89,209.26) and (238.5,209.88) .. (239.27,209.88) .. controls (240.03,209.88) and (240.65,209.26) .. (240.65,208.5) .. controls (240.65,207.73) and (240.03,207.12) .. (239.27,207.12) .. controls (238.5,207.12) and (237.89,207.73) .. (237.89,208.5) -- cycle ;

\end{tikzpicture}
\caption*{An example of a saturated transfer system $S$ on \(([1]\times [2])\times [3]\)}
\end{figure}
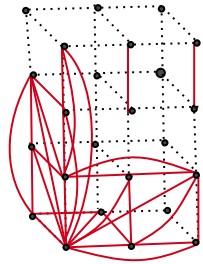

\begin{figure}[h]
\centering
\begin{subfigure}{5cm}
\tikzset{every picture/.style={line width=0.75pt}} 

\begin{tikzpicture}[x=0.75pt,y=0.75pt,yscale=-1,xscale=1]

\draw[dotted]    (502.32,192.5) -- (504.86,226.33) ;
\draw[dotted]    (520.91,210.24) -- (522.61,313.36) ;
\draw[dotted]    (553.86,245.73) -- (554.29,277.03) ;
\draw[dotted]    (589.36,242.38) -- (589.36,311.67) ;
\draw[dotted]    (570.76,191.65) -- (572.46,294.77) ;
\draw[dotted]    (536.96,190.81) -- (538.66,293.93) ;
\draw[dotted]    (587.24,210.24) -- (520.49,211.93) ;
\draw[dotted]    (570.76,191.65) -- (504.01,193.34) ;
\draw[dotted]    (572.46,294.77) -- (505.71,296.46) ;
\draw[dotted]    (589.36,242.38) -- (522.61,244.07) ;
\draw[dotted]    (570.77,259.28) -- (504.02,260.97) ;
\draw[dotted]    (571.61,224.64) -- (504.86,226.33) ;
\draw[dotted]    (522.61,313.36) -- (505.71,296.46) ;
\draw[dotted]    (520.49,211.93) -- (503.59,195.03) ;
\draw[dotted]    (587.66,208.55) -- (570.76,191.65) ;
\draw[dotted]    (554.28,209.4) -- (537.39,192.5) ;
\draw[dotted]    (589.36,311.67) -- (572.46,294.77) ;
\draw[dotted]    (554.29,277.93) -- (537.39,261.03) ;
\draw[dotted]    (555.98,243.23) -- (539.08,226.33) ;
\draw[dotted]   (589.36,276.18) -- (589.36,310.83) ;
\draw [color={rgb, 255:red, 208; green, 2; blue, 27 }  ,draw opacity=1 ]   (505.71,296.46) -- (513.29,304.05) -- (522.61,313.36) ;
\draw [color={rgb, 255:red, 208; green, 2; blue, 27 }  ,draw opacity=1 ]   (589.36,310.83) -- (522.61,313.36) ;
\draw[dotted]    (554.29,276.61) -- (555.56,314.21) ;
\draw [color={rgb, 255:red, 74; green, 144; blue, 226 }  ,draw opacity=1 ]   (504.02,260.97) -- (520.92,277.87) ;
\draw [color={rgb, 255:red, 245; green, 166; blue, 35 }  ,draw opacity=1 ]   (504.86,226.33) -- (521.76,243.23) ;
\draw [color={rgb, 255:red, 208; green, 2; blue, 27 }  ,draw opacity=1 ]   (539.08,295.62) -- (505.71,296.46) ;
\draw [color={rgb, 255:red, 208; green, 2; blue, 27 }  ,draw opacity=1 ]   (539.51,293.93) -- (555.98,312.52) ;
\draw [color={rgb, 255:red, 74; green, 144; blue, 226 }  ,draw opacity=1 ]   (587.67,275.34) -- (520.92,275.34) ;
\draw [color={rgb, 255:red, 208; green, 2; blue, 27 }  ,draw opacity=1 ]   (522.61,313.36) .. controls (554.09,321.9) and (562.54,322.75) .. (589.36,311.67) ;
\draw [color={rgb, 255:red, 74; green, 144; blue, 226 }  ,draw opacity=1 ]   (522.61,277.87) .. controls (554.71,262.86) and (563.16,267.08) .. (589.36,276.18) ;
\draw[dotted]    (589.36,276.18) -- (572.46,259.28) ;
\draw  [fill={rgb, 255:red, 74; green, 74; blue, 74 }  ,fill opacity=1 ] (523.93,313.36) .. controls (523.93,312.63) and (523.34,312.04) .. (522.61,312.04) .. controls (521.88,312.04) and (521.29,312.63) .. (521.29,313.36) .. controls (521.29,314.09) and (521.88,314.68) .. (522.61,314.68) .. controls (523.34,314.68) and (523.93,314.09) .. (523.93,313.36) -- cycle ;
\draw  [fill={rgb, 255:red, 74; green, 74; blue, 74 }  ,fill opacity=1 ] (589.36,310.83) .. controls (589.36,310.1) and (588.77,309.51) .. (588.04,309.51) .. controls (587.31,309.51) and (586.72,310.1) .. (586.72,310.83) .. controls (586.72,311.56) and (587.31,312.15) .. (588.04,312.15) .. controls (588.77,312.15) and (589.36,311.56) .. (589.36,310.83) -- cycle ;
\draw  [fill={rgb, 255:red, 74; green, 74; blue, 74 }  ,fill opacity=1 ] (556.88,312.89) .. controls (556.88,312.16) and (556.29,311.56) .. (555.56,311.56) .. controls (554.83,311.56) and (554.24,312.16) .. (554.24,312.89) .. controls (554.24,313.61) and (554.83,314.21) .. (555.56,314.21) .. controls (556.29,314.21) and (556.88,313.61) .. (556.88,312.89) -- cycle ;
\draw  [fill={rgb, 255:red, 74; green, 74; blue, 74 }  ,fill opacity=1 ] (575.1,294.77) .. controls (575.1,294.04) and (574.51,293.45) .. (573.78,293.45) .. controls (573.05,293.45) and (572.46,294.04) .. (572.46,294.77) .. controls (572.46,295.5) and (573.05,296.09) .. (573.78,296.09) .. controls (574.51,296.09) and (575.1,295.5) .. (575.1,294.77) -- cycle ;
\draw  [fill={rgb, 255:red, 74; green, 74; blue, 74 }  ,fill opacity=1 ] (541.72,295.62) .. controls (541.72,294.89) and (541.13,294.3) .. (540.4,294.3) .. controls (539.67,294.3) and (539.08,294.89) .. (539.08,295.62) .. controls (539.08,296.35) and (539.67,296.94) .. (540.4,296.94) .. controls (541.13,296.94) and (541.72,296.35) .. (541.72,295.62) -- cycle ;
\draw  [fill={rgb, 255:red, 74; green, 74; blue, 74 }  ,fill opacity=1 ] (507.03,297.78) .. controls (507.03,297.05) and (506.44,296.46) .. (505.71,296.46) .. controls (504.98,296.46) and (504.39,297.05) .. (504.39,297.78) .. controls (504.39,298.51) and (504.98,299.1) .. (505.71,299.1) .. controls (506.44,299.1) and (507.03,298.51) .. (507.03,297.78) -- cycle ;
\draw  [fill={rgb, 255:red, 74; green, 74; blue, 74 }  ,fill opacity=1 ] (589.62,276.96) .. controls (589.62,276.23) and (589.03,275.64) .. (588.3,275.64) .. controls (587.57,275.64) and (586.98,276.23) .. (586.98,276.96) .. controls (586.98,277.69) and (587.57,278.28) .. (588.3,278.28) .. controls (589.03,278.28) and (589.62,277.69) .. (589.62,276.96) -- cycle ;
\draw  [fill={rgb, 255:red, 74; green, 74; blue, 74 }  ,fill opacity=1 ] (523.56,277.87) .. controls (523.56,277.14) and (522.97,276.55) .. (522.24,276.55) .. controls (521.51,276.55) and (520.92,277.14) .. (520.92,277.87) .. controls (520.92,278.6) and (521.51,279.19) .. (522.24,279.19) .. controls (522.97,279.19) and (523.56,278.6) .. (523.56,277.87) -- cycle ;
\draw  [fill={rgb, 255:red, 74; green, 74; blue, 74 }  ,fill opacity=1 ] (555.61,277.93) .. controls (555.61,277.2) and (555.02,276.61) .. (554.29,276.61) .. controls (553.56,276.61) and (552.97,277.2) .. (552.97,277.93) .. controls (552.97,278.66) and (553.56,279.25) .. (554.29,279.25) .. controls (555.02,279.25) and (555.61,278.66) .. (555.61,277.93) -- cycle ;
\draw  [fill={rgb, 255:red, 74; green, 74; blue, 74 }  ,fill opacity=1 ] (590.68,243.7) .. controls (590.68,242.98) and (590.09,242.38) .. (589.36,242.38) .. controls (588.63,242.38) and (588.04,242.98) .. (588.04,243.7) .. controls (588.04,244.43) and (588.63,245.03) .. (589.36,245.03) .. controls (590.09,245.03) and (590.68,244.43) .. (590.68,243.7) -- cycle ;
\draw  [fill={rgb, 255:red, 74; green, 74; blue, 74 }  ,fill opacity=1 ] (538.71,261.45) .. controls (538.71,260.72) and (538.12,260.13) .. (537.39,260.13) .. controls (536.66,260.13) and (536.07,260.72) .. (536.07,261.45) .. controls (536.07,262.18) and (536.66,262.77) .. (537.39,262.77) .. controls (538.12,262.77) and (538.71,262.18) .. (538.71,261.45) -- cycle ;
\draw  [fill={rgb, 255:red, 74; green, 74; blue, 74 }  ,fill opacity=1 ] (506.61,262.72) .. controls (506.61,261.99) and (506.01,261.4) .. (505.29,261.4) .. controls (504.56,261.4) and (503.96,261.99) .. (503.96,262.72) .. controls (503.96,263.45) and (504.56,264.04) .. (505.29,264.04) .. controls (506.01,264.04) and (506.61,263.45) .. (506.61,262.72) -- cycle ;
\draw  [fill={rgb, 255:red, 74; green, 74; blue, 74 }  ,fill opacity=1 ] (523.93,245.39) .. controls (523.93,244.67) and (523.34,244.07) .. (522.61,244.07) .. controls (521.88,244.07) and (521.29,244.67) .. (521.29,245.39) .. controls (521.29,246.12) and (521.88,246.72) .. (522.61,246.72) .. controls (523.34,246.72) and (523.93,246.12) .. (523.93,245.39) -- cycle ;
\draw  [fill={rgb, 255:red, 74; green, 74; blue, 74 }  ,fill opacity=1 ] (573.78,260.6) .. controls (573.78,259.87) and (573.19,259.28) .. (572.46,259.28) .. controls (571.73,259.28) and (571.14,259.87) .. (571.14,260.6) .. controls (571.14,261.33) and (571.73,261.92) .. (572.46,261.92) .. controls (573.19,261.92) and (573.78,261.33) .. (573.78,260.6) -- cycle ;
\draw  [fill={rgb, 255:red, 74; green, 74; blue, 74 }  ,fill opacity=1 ] (557.3,244.55) .. controls (557.3,243.82) and (556.71,243.23) .. (555.98,243.23) .. controls (555.25,243.23) and (554.66,243.82) .. (554.66,244.55) .. controls (554.66,245.28) and (555.25,245.87) .. (555.98,245.87) .. controls (556.71,245.87) and (557.3,245.28) .. (557.3,244.55) -- cycle ;
\draw  [fill={rgb, 255:red, 74; green, 74; blue, 74 }  ,fill opacity=1 ] (507.5,226.33) .. controls (507.5,225.6) and (506.91,225.01) .. (506.18,225.01) .. controls (505.45,225.01) and (504.86,225.6) .. (504.86,226.33) .. controls (504.86,227.06) and (505.45,227.65) .. (506.18,227.65) .. controls (506.91,227.65) and (507.5,227.06) .. (507.5,226.33) -- cycle ;
\draw  [fill={rgb, 255:red, 74; green, 74; blue, 74 }  ,fill opacity=1 ] (539.56,226.81) .. controls (539.56,226.08) and (538.97,225.48) .. (538.24,225.48) .. controls (537.51,225.48) and (536.92,226.08) .. (536.92,226.81) .. controls (536.92,227.53) and (537.51,228.13) .. (538.24,228.13) .. controls (538.97,228.13) and (539.56,227.53) .. (539.56,226.81) -- cycle ;
\draw  [fill={rgb, 255:red, 74; green, 74; blue, 74 }  ,fill opacity=1 ] (568.23,225.34) .. controls (568.23,226.51) and (569.17,227.46) .. (570.34,227.46) .. controls (571.51,227.46) and (572.45,226.51) .. (572.45,225.34) .. controls (572.45,224.18) and (571.51,223.23) .. (570.34,223.23) .. controls (569.17,223.23) and (568.23,224.18) .. (568.23,225.34) -- cycle ;
\draw[dotted]    (587.24,242.24) -- (570.34,225.34) ;
\draw  [fill={rgb, 255:red, 74; green, 74; blue, 74 }  ,fill opacity=1 ] (587.24,210.24) .. controls (587.24,211) and (587.86,211.62) .. (588.62,211.62) .. controls (589.38,211.62) and (590,211) .. (590,210.24) .. controls (590,209.48) and (589.38,208.86) .. (588.62,208.86) .. controls (587.86,208.86) and (587.24,209.48) .. (587.24,210.24) -- cycle ;
\draw  [fill={rgb, 255:red, 74; green, 74; blue, 74 }  ,fill opacity=1 ] (553.86,211.09) .. controls (553.86,211.85) and (554.48,212.47) .. (555.24,212.47) .. controls (556.01,212.47) and (556.62,211.85) .. (556.62,211.09) .. controls (556.62,210.32) and (556.01,209.71) .. (555.24,209.71) .. controls (554.48,209.71) and (553.86,210.32) .. (553.86,211.09) -- cycle ;
\draw  [fill={rgb, 255:red, 74; green, 74; blue, 74 }  ,fill opacity=1 ] (500.94,193.88) .. controls (500.94,194.64) and (501.56,195.26) .. (502.32,195.26) .. controls (503.08,195.26) and (503.7,194.64) .. (503.7,193.88) .. controls (503.7,193.12) and (503.08,192.5) .. (502.32,192.5) .. controls (501.56,192.5) and (500.94,193.12) .. (500.94,193.88) -- cycle ;
\draw  [fill={rgb, 255:red, 74; green, 74; blue, 74 }  ,fill opacity=1 ] (520.49,211.93) .. controls (520.49,212.69) and (521.1,213.31) .. (521.87,213.31) .. controls (522.63,213.31) and (523.25,212.69) .. (523.25,211.93) .. controls (523.25,211.17) and (522.63,210.55) .. (521.87,210.55) .. controls (521.1,210.55) and (520.49,211.17) .. (520.49,211.93) -- cycle ;
\draw  [fill={rgb, 255:red, 74; green, 74; blue, 74 }  ,fill opacity=1 ] (569.38,193.03) .. controls (569.38,193.8) and (570,194.41) .. (570.76,194.41) .. controls (571.52,194.41) and (572.14,193.8) .. (572.14,193.03) .. controls (572.14,192.27) and (571.52,191.65) .. (570.76,191.65) .. controls (570,191.65) and (569.38,192.27) .. (569.38,193.03) -- cycle ;
\draw  [fill={rgb, 255:red, 74; green, 74; blue, 74 }  ,fill opacity=1 ] (537.39,192.5) .. controls (537.39,193.26) and (538,193.88) .. (538.77,193.88) .. controls (539.53,193.88) and (540.15,193.26) .. (540.15,192.5) .. controls (540.15,191.73) and (539.53,191.12) .. (538.77,191.12) .. controls (538,191.12) and (537.39,191.73) .. (537.39,192.5) -- cycle ;
\draw[dotted]    (504.86,226.33) -- (505.71,297.78) ;
\draw[dotted]    (588.62,210.24) -- (589.36,242.38) ;
\draw[dotted]    (555.24,212.47) -- (555.98,244.55) ;
\end{tikzpicture}
\caption*{Layers \(S_0\), \(S_1\), \(S_2\) are dawn in red, blue, and orange respectively. Note that \(S_3=\emptyset\). }
\end{subfigure}
\begin{subfigure}{5cm}
\centering
\tikzset{every picture/.style={line width=0.75pt}} 

\begin{tikzpicture}[x=0.75pt,y=0.75pt,yscale=-1,xscale=1]

\draw[dotted]    (478.46,66.81) -- (479.74,101.48) ;
\draw[dotted]    (528.74,86.24) -- (461.99,87.93) ;
\draw[dotted]    (514.43,69.08) -- (447.67,70.77) ;
\draw[dotted]    (513.11,100.64) -- (446.36,102.33) ;
\draw[dotted]    (529.16,84.55) -- (512.26,67.65) ;
\draw[dotted]    (495.78,85.4) -- (478.89,68.5) ;
\draw  [fill={rgb, 255:red, 74; green, 74; blue, 74 }  ,fill opacity=1 ] (449,102.33) .. controls (449,101.6) and (448.41,101.01) .. (447.68,101.01) .. controls (446.95,101.01) and (446.36,101.6) .. (446.36,102.33) .. controls (446.36,103.06) and (446.95,103.65) .. (447.68,103.65) .. controls (448.41,103.65) and (449,103.06) .. (449,102.33) -- cycle ;
\draw  [fill={rgb, 255:red, 74; green, 74; blue, 74 }  ,fill opacity=1 ] (481.06,102.81) .. controls (481.06,102.08) and (480.47,101.48) .. (479.74,101.48) .. controls (479.01,101.48) and (478.42,102.08) .. (478.42,102.81) .. controls (478.42,103.53) and (479.01,104.13) .. (479.74,104.13) .. controls (480.47,104.13) and (481.06,103.53) .. (481.06,102.81) -- cycle ;
\draw[dotted]    (528.74,118.24) -- (513.11,100.64) ;
\draw  [fill={rgb, 255:red, 74; green, 74; blue, 74 }  ,fill opacity=1 ] (528.74,86.24) .. controls (528.74,87) and (529.36,87.62) .. (530.12,87.62) .. controls (530.88,87.62) and (531.5,87) .. (531.5,86.24) .. controls (531.5,85.48) and (530.88,84.86) .. (530.12,84.86) .. controls (529.36,84.86) and (528.74,85.48) .. (528.74,86.24) -- cycle ;
\draw  [fill={rgb, 255:red, 74; green, 74; blue, 74 }  ,fill opacity=1 ] (495.36,87.09) .. controls (495.36,87.85) and (495.98,88.47) .. (496.74,88.47) .. controls (497.51,88.47) and (498.12,87.85) .. (498.12,87.09) .. controls (498.12,86.32) and (497.51,85.71) .. (496.74,85.71) .. controls (495.98,85.71) and (495.36,86.32) .. (495.36,87.09) -- cycle ;
\draw  [fill={rgb, 255:red, 74; green, 74; blue, 74 }  ,fill opacity=1 ] (446.29,70.77) .. controls (446.29,71.53) and (446.91,72.15) .. (447.67,72.15) .. controls (448.44,72.15) and (449.06,71.53) .. (449.06,70.77) .. controls (449.06,70.01) and (448.44,69.39) .. (447.67,69.39) .. controls (446.91,69.39) and (446.29,70.01) .. (446.29,70.77) -- cycle ;
\draw  [fill={rgb, 255:red, 74; green, 74; blue, 74 }  ,fill opacity=1 ] (461.99,87.93) .. controls (461.99,88.69) and (462.6,89.31) .. (463.37,89.31) .. controls (464.13,89.31) and (464.75,88.69) .. (464.75,87.93) .. controls (464.75,87.17) and (464.13,86.55) .. (463.37,86.55) .. controls (462.6,86.55) and (461.99,87.17) .. (461.99,87.93) -- cycle ;
\draw  [fill={rgb, 255:red, 74; green, 74; blue, 74 }  ,fill opacity=1 ] (510.88,69.03) .. controls (510.88,69.8) and (511.5,70.41) .. (512.26,70.41) .. controls (513.02,70.41) and (513.64,69.8) .. (513.64,69.03) .. controls (513.64,68.27) and (513.02,67.65) .. (512.26,67.65) .. controls (511.5,67.65) and (510.88,68.27) .. (510.88,69.03) -- cycle ;
\draw  [fill={rgb, 255:red, 74; green, 74; blue, 74 }  ,fill opacity=1 ] (478.89,68.5) .. controls (478.89,69.26) and (479.5,69.88) .. (480.27,69.88) .. controls (481.03,69.88) and (481.65,69.26) .. (481.65,68.5) .. controls (481.65,67.73) and (481.03,67.12) .. (480.27,67.12) .. controls (479.5,67.12) and (478.89,67.73) .. (478.89,68.5) -- cycle ;
\draw[dotted]    (530.12,86.24) -- (530.86,118.38) ;
\draw[dotted]    (496.74,88.47) -- (497.48,120.55) ;
\draw[dotted]    (512.26,70.41) -- (513,102.5) ;
\draw [color={rgb, 255:red, 208; green, 2; blue, 27 }  ,draw opacity=1 ]   (530.86,118.38) -- (464.11,120.92) ;
\draw  [fill={rgb, 255:red, 74; green, 74; blue, 74 }  ,fill opacity=1 ] (464.58,120.55) .. controls (464.58,119.82) and (463.99,119.23) .. (463.26,119.23) .. controls (462.53,119.23) and (461.94,119.82) .. (461.94,120.55) .. controls (461.94,121.28) and (462.53,121.87) .. (463.26,121.87) .. controls (463.99,121.87) and (464.58,121.28) .. (464.58,120.55) -- cycle ;
\draw [color={rgb, 255:red, 208; green, 2; blue, 27 }  ,draw opacity=1 ]   (463.37,86.55) -- (463.26,120.55) ;
\draw [color={rgb, 255:red, 208; green, 2; blue, 27 }  ,draw opacity=1 ]   (447.68,101.01) -- (463.26,120.55) ;
\draw [color={rgb, 255:red, 208; green, 2; blue, 27 }  ,draw opacity=1 ]   (463.26,121.87) .. controls (494.74,130.41) and (503.83,126.59) .. (530.86,118.38) ;
\draw [color={rgb, 255:red, 208; green, 2; blue, 27 }  ,draw opacity=1 ]   (447.67,72.15) -- (447.68,102.33) ;
\draw [color={rgb, 255:red, 208; green, 2; blue, 27 }  ,draw opacity=1 ]   (449.06,70.77) -- (463.37,87.93) ;
\draw  [fill={rgb, 255:red, 74; green, 74; blue, 74 }  ,fill opacity=1 ] (498.8,119.23) .. controls (498.8,118.5) and (498.21,117.91) .. (497.48,117.91) .. controls (496.75,117.91) and (496.16,118.5) .. (496.16,119.23) .. controls (496.16,119.96) and (496.75,120.55) .. (497.48,120.55) .. controls (498.21,120.55) and (498.8,119.96) .. (498.8,119.23) -- cycle ;
\draw  [fill={rgb, 255:red, 74; green, 74; blue, 74 }  ,fill opacity=1 ] (531.38,118.24) .. controls (531.38,117.51) and (530.79,116.92) .. (530.06,116.92) .. controls (529.33,116.92) and (528.74,117.51) .. (528.74,118.24) .. controls (528.74,118.97) and (529.33,119.56) .. (530.06,119.56) .. controls (530.79,119.56) and (531.38,118.97) .. (531.38,118.24) -- cycle ;
\draw  [fill={rgb, 255:red, 74; green, 74; blue, 74 }  ,fill opacity=1 ] (513.11,100.64) .. controls (513.11,99.91) and (512.52,99.32) .. (511.79,99.32) .. controls (511.06,99.32) and (510.47,99.91) .. (510.47,100.64) .. controls (510.47,101.37) and (511.06,101.96) .. (511.79,101.96) .. controls (512.52,101.96) and (513.11,101.37) .. (513.11,100.64) -- cycle ;
\draw [color={rgb, 255:red, 208; green, 2; blue, 27 }  ,draw opacity=1 ]   (447.67,70.77) -- (463.26,121.87) ;
\draw[dotted]    (497.48,119.65) -- (479.74,104.13) ;

\end{tikzpicture}

\caption*{Picture of \(S^2\)}
\end{subfigure}
\caption{Diagrams depicting \autoref{defn:layer} when $P=\chain 1\times\chain 2$ and $S$ is the saturated transfer system on $P\times \chain 3$ in the top diagram.}\label{fig:layer}
\end{figure}

We start with an observation about saturated transfers systems on $P \times [1]$, which follows from the fact that the restriction of $(x,1)\to (y,1)$ along $(y,0)$ is $(x,0)\to (y,0)$.

\begin{lemma}\label{lem:layer-ordering}
    If $S$ is a saturated transfer system on $P\times [1]$, then the layers satisfy $S_1 \leq S_0$.\hfill\qedsymbol
\end{lemma}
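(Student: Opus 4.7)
The plan is to unfold the definitions and invoke the restriction axiom of a saturated transfer system. Recall that the partial order on $\SatTr(P)$ is by inclusion of relations, so the statement $S_1\le S_0$ unpacks to: whenever $xS_1y$, we also have $xS_0y$. By the definition of the layer $S_i$, the hypothesis $xS_1y$ means exactly $(x,1)\,S\,(y,1)$, and the conclusion $xS_0y$ means exactly $(x,0)\,S\,(y,0)$.

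First I would use the refinement axiom (condition (1) of \autoref{defn:sattransfer}) to note that $(x,1)\,S\,(y,1)$ forces $(x,1)\le (y,1)$ in $P\times\chain 1$, which in turn gives $x\le y$ in $P$. In particular $x\wedge y=x$, a simplification we will need in the next step.

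Next I would apply the restriction axiom (condition (2)) to the relation $(x,1)\,S\,(y,1)$, restricting along the element $w=(y,0)\in P\times\chain 1$. Computing meets coordinatewise in $P\times\chain 1$ yields
\[
  (x,1)\wedge(y,0) = (x\wedge y,\,0) \quad\text{and}\quad (y,1)\wedge(y,0) = (y,\,0),
\]
so the restriction axiom delivers $(x\wedge y,0)\,S\,(y,0)$. Combining with $x\wedge y=x$ from the previous paragraph gives $(x,0)\,S\,(y,0)$, as required.

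There is no genuine obstacle here; the only thing to watch is keeping the two order relations (inclusion of relations on $\SatTr(P)$ versus the lattice order on $P\times\chain 1$) clearly separated and computing the coordinatewise meet correctly. Both are routine.
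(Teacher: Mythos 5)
Your proof is correct and follows exactly the same route the paper sketches: restrict the relation $(x,1)\,S\,(y,1)$ along $(y,0)$, compute the meet coordinatewise, and use $x\le y$ (from the refinement axiom) to simplify $x\wedge y=x$. Your write-up simply spells out what the paper leaves as a one-sentence remark immediately preceding the lemma.
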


For a general saturated transfer system on $P \times [n]$, it turns out that the information about height one cylinders is all that is needed to recover the entire system.

\begin{lemma}\label{lem:slice-decomposition}
  Let $(P,\leq)$ be a finite lattice, and $S$ a saturated transfer system on $P\times [n]$. Then $S$ is completely determined by the tuple $(S^1,\dots,S^n)$ of saturated transfer systems on $P\times [1]$.
\end{lemma}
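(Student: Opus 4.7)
The plan is to reconstruct $R$ from the tuple $(S^1,\dots,S^n)$ by vertically decomposing each relation in $R$ into single-step pieces plus one ``horizontal'' relation at the top layer. Since $R$ refines $\le$, it suffices to decide whether $(x,i)R(y,j)$ holds for each pair with $x\le y$ in $P$ and $i\le j$ in $\chain n$. I would first observe that every layer $S_i$ is already determined by the tuple: $S_i$ is the bottom layer of $S^{i+1}$ for $0\le i<n$, and $S_n$ is the top layer of $S^n$. In particular the case $i=j$ is immediate, since $(x,i)R(y,i)$ is exactly a relation in $S_i$.

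For $i<j$, the key claim is the equivalence
\[
  (x,i)R(y,j) \iff (x,k)\,S^{k+1}\,(x,k+1) \text{ for all } i\le k<j, \text{ and } (x,j)\,S_j\,(y,j).
\]
The $(\Leftarrow)$ direction is just transitivity of the partial order $R$, which welds the listed relations into the chain $(x,i)R(x,i+1)R\cdots R(x,j)R(y,j)$. For $(\Rightarrow)$, I apply decomposability (axiom (3) of \autoref{defn:sattransfer}) to $(x,i)R(y,j)$: for each $i\le k\le j$ the inequalities $(x,i)\le (x,k)\le (y,j)$ hold in $P\times\chain n$, so decomposability yields $(x,i)R(x,k)$ and $(x,k)R(y,j)$. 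Taking $k=j$ gives $(x,j)R(y,j)$, a relation in $S_j$. Applying decomposability a second time to $(x,k)R(x,j)$ with intermediate element $(x,k+1)$ yields $(x,k)R(x,k+1)$, a relation in $S^{k+1}$, for each $i\le k<j$. Every ingredient on the right-hand side of the equivalence is thus visibly determined by the tuple.

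The main obstacle is merely bookkeeping through the repeated application of decomposability; there is no deep structural difficulty, since decomposability is precisely designed to let one slice relations into one-step increments along the chain $\chain n$, and transitivity glues the pieces back together. One could alternatively proceed by induction on $j-i$, but the explicit chain decomposition above has the virtue of furnishing a concrete recipe for reading off $R$ from $(S^1,\dots,S^n)$, which will likely be useful in subsequent enumerative arguments.
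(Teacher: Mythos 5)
Your argument is correct and is essentially the paper's proof with the mirror decomposition: the paper splits $(x,i)S(y,j)$ into a horizontal step $(x,i)\to(y,i)$ at the bottom layer followed by vertical steps $(y,k-1)\to(y,k)$, whereas you use vertical steps $(x,k)\to(x,k+1)$ at $x$ followed by a horizontal step $(x,j)\to(y,j)$ at the top layer; both rely on decomposability to slice the relation into one-step pieces and on transitivity to reassemble. One minor wording slip: you say you apply decomposability ``a second time to $(x,k)R(x,j)$,'' but the first application gave you $(x,i)R(x,k)$ and $(x,k)R(y,j)$, not $(x,k)R(x,j)$; to obtain $(x,k)R(x,j)$ you need one further decomposition (for instance, decompose $(x,k)R(y,j)$ along the intermediate $(x,j)$, or decompose the previously obtained $(x,i)R(x,j)$ along $(x,k)$). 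This is a one-line fix and does not affect the substance or structure of your argument.
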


\begin{proof}
   Given a saturated transfer system $S$, we consider the relation $Q$ on $P\times [n]$ given by taking $\bigcup_{i=1}^n S^i$ and then closing under transitivity. We claim $Q=S$, and thus, we can uniquely recover $S$ from its height one cylinders.

   Note that since $S$ contains $\bigcup_{i-1}^n S^i$ and it is closed under transitivity, then $Q \subseteq S$. On the other hand, assume $(x,i)S(y,j)$, for some $x\leq y$ in $P$, and $0\leq i \leq j \leq n$. Since $S$ is a saturated transfer system, it is decomposable, and thus we have $(x,i)S(y,i)$, and $(y,k-1)S(y,k)$ for all $k=i+1,\dots,j$. All these relations are in $\bigcup_{i-1}^n S^i$, and thus $(x,i)Q(y,j)$, proving that $S\subseteq Q$.
\end{proof}

Following the same strategy as in \autoref{sec:submonoids}, we construct a transition matrix that will aid in the enumeration of saturated transfer systems on $P\times [n]$. We remark that in this case the proofs are easier, in light of \autoref{lem:slice-decomposition}.

\begin{defn}\label{defn:saturated-graph}
    Let $(P,\leq)$ be a finite lattice. The \emph{saturated transfer system graph} of $P$ is the weighted directed graph $ST(P)$ with vertices the saturated transfer systems of $P$ and weight function $w \colon \SatTr(P) \times \SatTr(P) \to \NN$ given by
    \[w(R,Q):= \# \{ S \in \SatTr(P\times [1]) : S_0=Q \text{ and } S_1=R\}.\]

    After choosing a linear ordering on $\SatTr(P)$, we write $W=W(P)$ for the associated adjacency matrix with $W_{R,Q}=w(R,Q)$.
\end{defn}

\begin{remark}
    The result of \autoref{lem:layer-ordering} implies that if $w(R,Q)\neq 0$, then $R\leq Q$ in the refinement order of $\SatTr(P)$.  The converse is also true, as if $R\leq Q$, we can always construct the transfer system on $P \times [1]$ given by placing $Q$ on the $P\times \{0\}$ layer, $R$ on the $P\times \{1\}$ layer, and including no vertical relations. 

    Thus we can think of the graph as enhancing the directed graph given by the poset $(\SatTr(P),\leq)$ with positive weights.
\end{remark}

\begin{prop}
    Let $P$ be a finite lattice with weighted adjacency matrix $W=W(P)$. Then for all $R,Q\in \SatTr(P)$, and all $n\geq 0$,
  \[\#\{S \in \SatTr(P\times [n]) : S_0=Q \text{ and } S_n=R\}=(W^n)_{R,Q}.\]
  Thus,
  \[\# \SatTr(P\times [n]) =\sum_{R,Q\in \SatTr(P)}(W^n)_{R,Q}.\]
\end{prop}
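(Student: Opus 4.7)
The plan is to reduce this enumeration to a matrix multiplication via \autoref{lem:slice-decomposition}, mirroring the structure of \autoref{lem:upclosed-recurrence} in the submonoid setting. I would proceed by induction on $n$. The base case $n=0$ is immediate: $\SatTr(P \times [0]) \cong \SatTr(P)$ with $S_0 = S_n = S$, so the count equals $\delta_{R,Q} = (W^0)_{R,Q}$.

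For the inductive step, the key move is to construct a bijection between the set $\{S \in \SatTr(P \times [n+1]) : S_0 = Q,\, S_{n+1} = R\}$ and the disjoint union over $Q' \in \SatTr(P)$ of $\{S' \in \SatTr(P \times [n]) : S'_0 = Q,\, S'_n = Q'\} \times \{T \in \SatTr(P \times [1]) : T_0 = Q',\, T_1 = R\}$. The forward map sends $S$ to $(S', T)$ with $S' = S|_{P \times [n]}$ (identifying $[n]$ with $\{0, \ldots, n\} \subseteq [n+1]$) and $T = S^{n+1}$ the top height-one cylinder. The inverse map sends a compatible pair $(S', T)$ (i.e., with $S'_n = T_0$) to the relation on $P \times [n+1]$ obtained by transitively closing $S' \cup T$. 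Combining the inductive hypothesis with \autoref{defn:saturated-graph} then yields
\[
  \#\{S : S_0 = Q,\, S_{n+1} = R\} = \sum_{Q' \in \SatTr(P)} w(R, Q')\cdot (W^n)_{Q',Q} = (W^{n+1})_{R,Q},
\]
and summing over $R, Q \in \SatTr(P)$ gives the total count.

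The main obstacle is verifying that the backward map is well-defined — that is, that the transitive closure of $S' \cup T$ is genuinely a saturated transfer system on $P \times [n+1]$. Refinement of $\leq$ follows formally, and closure under restriction reduces to the corresponding property of $S'$ and $T$ because any restriction decomposes through the ``middle slice'' $P \times \{n\}$ where the two pieces agree. Decomposability is the most delicate of the three axioms: a relation $(x, i)\,S\,(z, k)$ created by chaining a relation in $S'$ with one in $T$ must be witnessed by all intermediate $(y, j)$ on both sides, which amounts to pushing the decomposability of $S'$ and $T$ through the gluing point. This verification is essentially the content of the proof of \autoref{lem:slice-decomposition}, so an efficient presentation would either invoke that lemma directly or re-run its transitive-closure argument in the inductive step. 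Once this point is settled, the formula follows immediately from the matrix recursion $(W^{n+1})_{R,Q} = \sum_{Q'} W_{R,Q'} (W^n)_{Q',Q}$.
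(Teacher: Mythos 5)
Your argument is correct in outline but takes a different route from the paper's. The paper's proof is non-inductive: it invokes \autoref{lem:slice-decomposition} once to identify a saturated transfer system on $P\times[n]$ with a path $S_n\leq\cdots\leq S_0$ in $\SatTr(P)$ together with a compatible choice of height-one cylinders $(S^1,\dots,S^n)$, then observes that this is exactly what the entries of $W^n$ count as weighted walks. Your approach instead peels off one layer at a time and recovers the matrix recursion $(W^{n+1})_{R,Q}=\sum_{Q'}W_{R,Q'}(W^n)_{Q',Q}$ inductively; both reduce to the same bijection, and your version makes the walk-counting mechanism slightly more explicit at the cost of carrying an induction.

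One caution about how you discharge the ``main obstacle.'' You say verifying that the transitive closure of $S'\cup T$ is a genuine saturated transfer system is ``essentially the content of the proof of \autoref{lem:slice-decomposition}'' and suggest invoking that lemma directly. That is not quite right: \autoref{lem:slice-decomposition} starts from a saturated transfer system $S$ that already exists and shows that the transitive closure of $\bigcup_i S^i$ recovers $S$ — this gives the \emph{injectivity} of the map $S\mapsto(S^1,\dots,S^n)$, not the \emph{surjectivity} onto compatible tuples. What your backward map requires is the reverse implication: that gluing compatible cylinders along a common layer and closing under transitivity always produces a saturated transfer system (in particular, that decomposability survives the closure). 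The paper is equally terse on this point, so you are not in worse shape than the source, but your plan should explicitly re-run the restriction/decomposability checks for the glued relation rather than cite \autoref{lem:slice-decomposition} for them. (The closure-under-restriction step can indeed be outsourced to the general fact that this property persists under transitive closure, as the paper later does via Rubin's Theorem A.2; decomposability is the part that genuinely needs an argument.)
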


\begin{proof}
    The second equality follows directly from the first. To prove the first assertion, note that \autoref{lem:slice-decomposition} implies that a saturated transfer system is determined by a path $S_n\leq S_{n-1} \leq \dots \leq S_0$ in the poset $\SatTr(P)$, and for each such path, a choice of how to fill the cylinders $S^i$, with the condition that the bottom and top layers of $S^i$ are $S_{i-1}$ and $S_i$, respectively, which is precisely being counted by $W_{S_i,S_{i-1}}$. The entries of the powers of the adjacency matrix are precisely giving the count when $S_0$ and $S_n$ are fixed.
\end{proof}

We finish this section by proving the following result. Recall the bijection $\chi$ of \autoref{thm:bij-sattr-subm}, and the weighted graphs $G(P)$ of \autoref{defn:submon_graph} and $ST(P)$ of \autoref{defn:saturated-graph}.

\begin{theorem}
    For a finite lattice $P$, the bijection $\chi$ induces an isomorphism of weighted graphs between $ST(P)$ and $G(P)$.
\end{theorem}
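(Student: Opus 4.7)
The plan is to apply the bijection $\chi_{P\times [1]}\colon \SatTr(P\times [1])\to \SubMon(P\times [1],\vee)$ of \autoref{thm:bij-sattr-subm} to the enlarged lattice $P\times [1]$ and verify that it is compatible with the ``layer'' structure on both sides. Since $\chi$ is already a vertex bijection, the entire theorem reduces to checking the edge-weight equality $w_{ST}(R,Q) = w_G(\chi(R),\chi(Q))$ for all $R,Q\in \SatTr(P)$. By the definition of $w_{ST}$ and by (the bijective content of) \autoref{lem:upclosed-recurrence} specialized to $n=0$---which identifies $w_G(A,B)$ with the number of submonoids $T\le P\times [1]$ satisfying $\pi_1 T = A$ and $\pi_0(T\cap (P\times\{0\})) = B$---this equality will follow once we establish, for every $S\in \SatTr(P\times [1])$, the two layer identities
\[
   \pi_0(\Phi(S)\cap (P\times\{0\})) = \chi(S_0), \qquad \pi_1 \Phi(S) = \chi(S_1),
\]
where $\Phi := \chi_{P\times [1]}$.

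The first identity is straightforward: the elements strictly below $(x,0)$ in $P\times [1]$ are exactly the $(y,0)$ with $y<x$, so $(x,0)$ is minimal in its $S$-component if and only if $x$ is minimal in its $S_0$-component, i.e., $x\in \chi(S_0)$. For the second identity, \autoref{lem:layer-ordering} gives $S_1\le S_0$ and order-reversal of $\chi$ then yields $\chi(S_0)\subseteq \chi(S_1)$; a short inspection sandwiches $\pi_1\Phi(S)$ between $\chi(S_0)$ and $\chi(S_1)$. The genuine content, and the main obstacle of the proof, is to show that every $x\in \chi(S_1)\setminus \chi(S_0)$ actually lies in $\pi_1\Phi(S)$, i.e., that $(x,1)$ is minimal in its $S$-component.

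I would settle this last claim by contradiction, playing saturation (decomposability) and transitivity against the restriction axiom. If $(x,1)$ fails to be minimal, there is a vertical relation $(z,0)\,S\,(x,1)$ with $z\le x$; decomposing through the intermediate element $(z,1)$ yields $z\,S_1\,x$, and minimality $x\in \chi(S_1)$ forces $z=x$, giving $(x,0)\,S\,(x,1)$. If in addition $x\notin \chi(S_0)$, some $y<x$ witnesses this via $(y,0)\,S\,(x,0)$; combining with $(x,0)\,S\,(x,1)$ by transitivity gives $(y,0)\,S\,(x,1)$, and decomposing through $(y,1)$ produces $y\,S_1\,x$, contradicting $x\in \chi(S_1)$. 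With the two layer identities in hand, $\Phi$ restricts to a bijection between $\{S : S_0=Q,\, S_1=R\}$ and $\{T\le P\times[1] : \pi_1 T = \chi(R),\, \pi_0(T\cap(P\times\{0\})) = \chi(Q)\}$, and \autoref{lem:upclosed-recurrence} converts the latter count into $w_G(\chi(R),\chi(Q))$, completing the weighted graph isomorphism.
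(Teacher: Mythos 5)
Your proposal is correct, and it takes a genuinely different route from the paper. The paper proves $w(R,Q)=w(\chi(R),\chi(Q))$ by constructing, for each pair $(R,Q)$, an explicit bijection between saturated transfer systems $S$ on $P\times[1]$ with layers $(Q,R)$ and ideals $I$ of $\chi(R)$ with $I\cup\chi(Q)=\chi(R)$: forward via the set $V=\{x:(x,0)\,S\,(x,1)\}$ of vertical arrows, and backward via a transitive-closure construction whose restriction, saturation, and transitivity axioms are then verified by a lengthy case analysis. You instead apply the already-established bijection $\chi_{P\times[1]}$ of \autoref{thm:bij-sattr-subm} to the auxiliary lattice $P\times[1]$, combine it with the bijective content of \autoref{lem:upclosed-recurrence} (equivalently, \autoref{thm:walks} with $n=1$), and reduce the whole weight equality to two layer-compatibility identities. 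This is shorter and avoids re-verifying the transfer-system axioms for a newly built relation; the trade-off is that it leans on \autoref{thm:bij-sattr-subm} holding for the product lattice, whereas the paper's argument is self-contained at the level of $P$. The two constructions in fact agree: for $x\in\chi(S_1)$ one checks that $(x,1)$ is minimal in its $S$-component precisely when $(x,0)\,S\,(x,1)$ fails, so $\pi(\Phi(S)\cap(P\times\{1\}))=\chi(S_1)\cap(P\setminus V)$, which is the paper's ideal $I$.

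Two steps need to be supplied. First, you repeatedly use that the minimum $m$ of an $S$-component is \emph{directly} $S$-related to every element $u$ of that component (``there is a vertical relation $(z,0)\,S\,(x,1)$,'' ``some $y<x$ witnesses this via $(y,0)\,S\,(x,0)$''). Components are a priori only zig-zag connected, so this requires an argument: induct along a zig-zag from $m$ to $u$, using transitivity for forward arrows and saturation (decomposing $m\,S\,u_i$ through $u_{i+1}$ when $u_{i+1}\,S\,u_i$) for backward ones. Second, your justification of the first layer identity only yields $\pi_0(\Phi(S)\cap(P\times\{0\}))\subseteq\chi(S_0)$; knowing that the elements below $(x,0)$ all lie in $P\times\{0\}$ does not by itself show that a level-$0$ element of the $S$-component of $(x,0)$ lies in the $S_0$-component of $x$, since the connecting zig-zag may pass through level $1$. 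The reverse containment follows from the fact above: if $(x,0)$ is not minimal, its component's minimum is some $(w,0)$ with $w<x$ and $(w,0)\,S\,(x,0)$, hence $w\,S_0\,x$ and $x\notin\chi(S_0)$. With these two points filled in, your argument is complete.
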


\begin{proof}
    Since the vertices of $ST(P)$ are the saturated transfer systems of $P$ and the vertices of $G(P)$ are the submonoids of $(P, \vee)$, \autoref{thm:bij-sattr-subm} establishes a bijection between both sets of vertices. It thus remains to show that for saturated transfer systems $R,Q$ of $P$, 
    \[w(R,Q)=w(\chi(R),\chi(Q)).\]
    Recall that the left-hand side is counting the saturated transfer systems on $P\times [1]$ with bottom layer given by $Q$ and top layer given by $R$, while the right-hand side is counting the ideals (\emph{i.e.}, upward-closed sets) of $\chi(R)$ whose union with $\chi(Q)$ is equal to $\chi(R)$. We thus construct an explicit bijection between these sets. 

    To go forward, let $S$ be a saturated transfer system on $P\times [1]$ such that $S_0=Q$ and $S_1=R$. Let 
    \[V=\{ x \in P : (x,0) S (x,1)\},\]
    that is, the set of elements in $P$ that correspond to vertical arrows in $S$. 
    Now consider
    the set $I=\chi(R) \cap (P\setminus V)$. The restriction condition on $S$ implies $V$ is downward-closed, and hence $I$ is an upward-closed subset of $\chi(R)$. We claim that $I\cup \chi(Q)=\chi(R)$. Note that since $S$ exists, $R\leq Q$, and since $\chi$ is order reversing, we have that $\chi(Q) \subseteq \chi (R)$, giving one containment. For the reverse containment, suppose $x\in \chi(R)$ but $x\not\in I$. This means by definition that $x\in V$, so $(x,0)S(x,1)$. Since $x\in\chi(R)$, $x$ is minimal in its connected component in $R$. We now show that $x$ is also minimal in its connected component in $Q$, thus showing that $x\in\chi(Q)$, and proving the containment. Let $y\leq x$ in $P$, and suppose for the sake of contradiction that $y\to x$ is in $Q$, meaning that the following are all in $S$.

\begin{center}
\tikzset{every picture/.style={line width=0.75pt}} 

\begin{tikzpicture}[x=0.75pt,y=0.75pt,yscale=-1,xscale=1]

\draw    (55.03,150.43) -- (134.5,150.5) ;
\draw    (134.73,109.69) -- (134.5,150.5) ;

\draw (137,141) node [anchor=north west][inner sep=0.75pt]  [font=\scriptsize]  {$( x,0)$};
\draw (137.5,99.5) node [anchor=north west][inner sep=0.75pt]  [font=\scriptsize]  {$( x,1)$};
\draw (18.5,147) node [anchor=north west][inner sep=0.75pt]  [font=\scriptsize]  {$( y,0)$};
\end{tikzpicture}
\end{center}
    \noindent Then we have that $(y,0)\to (x,0)$ and $(x,0)\to(x,1)$ are in $S$, which implies by transitivity that $(y,0) \to (x,1)$ is in $S$. By decomposability, this in turn implies that $(y,1)\to (x,1)$, which contradicts the minimality of $x$ within its connected component in $R$, as depicted below.
    \begin{center}
        \tikzset{every picture/.style={line width=0.75pt}} 

\begin{tikzpicture}[x=0.75pt,y=0.75pt,yscale=-1,xscale=1]

\draw    (239.53,149.43) -- (319,149.5) ;
\draw    (239.77,108.62) -- (319.23,108.69) ;
\draw    (239.53,149.43) -- (319.23,108.69) ;
\draw    (319.73,108.69) -- (319.5,149.5) ;
\draw    (240.27,108.62) -- (240.03,149.43) ;

\draw (321.5,145.5) node [anchor=north west][inner sep=0.75pt]  [font=\scriptsize]  {$( x,0)$};
\draw (203,144.5) node [anchor=north west][inner sep=0.75pt]  [font=\scriptsize]  {$( y,0)$};
\draw (322,97.5) node [anchor=north west][inner sep=0.75pt]  [font=\scriptsize]  {$( x,1)$};
\draw (202,97) node [anchor=north west][inner sep=0.75pt]  [font=\scriptsize]  {$( y,1)$};

\end{tikzpicture}
    \end{center}

  We can reverse the construction as follows. Given an upward-closed set $I\subseteq \chi (R)$ such that $I\cup \chi(Q)=\chi(R)$, let $\widetilde{V}$ denote the subset of $P$ consisting of those elements whose connected component $C$ in $R$ satisfies that $C \cap (P \setminus I)$ is non-empty. Now construct the relation $S$ on $P\times [n]$ as the closure under transitivity of 
  \[T=(Q \times \{0\}) \cup (R \times \{1\}) \cup (\widetilde{V} \times [1]).\]
  We claim that $S$ is a saturated transfer system on $P \times [1]$ with top and bottom layers given by $R$ and $Q$, respectively. The latter follows directly from the construction, since closing under transitivity will only add arrows that connect the bottom layer with the top layer. 
  
  To prove $S$ is closed under restriction it is enough to prove that $T$ itself is as well, as this property is respected by closing under transitivity (see \cite[Theorem A.2]{Rubin}). We split into cases. Since $Q$ is itself closed under restriction, the restriction of an arrow in $Q \times \{0\}$ is in $Q\times \{0\}$. The same is true for arrows in $R \times \{1\}$ when restricting along elements of the form $(y,1)$. The restriction of $(x,1) \to (z,1)$ along $(y,0)$ is the arrow $(x\wedge y, 0) \to (z\wedge y, 0)$. Now, if the former is in $R\times \{1\}$, we have that $(x\wedge y, 1) \to (z\wedge y, 1)$ is also in $R\times \{1\}$ because $R$ is closed under restriction, and from there we get that $(x\wedge y, 0) \to (z\wedge y, 0)$ is in $Q \times \{0\}$, since $R \leq Q$. 

  Finally, we consider arrows in $\widetilde{V} \times [1]$. Showing that this set is closed under restriction is equivalent to showing that $\widetilde{V}$ is downward-closed. Let $x\in \widetilde{V}$ and $y\leq x$. Then there exists $z\in P\setminus I$ such that $x$ and $z$ are in the same connected component with respect to $R$, meaning that there is a zig-zag of arrows in $R$ connecting $x$ and $z$. Taking the restriction of this zig-zag with respect to $y$, we obtain a zig-zag of arrows in $R$ connecting $y=x\wedge y$ with $z\wedge y$. Now, since $I$ is upward-closed we have that $P\setminus I$ is downward-closed, and hence, $z\wedge y$ is in $P\setminus I$. This implies that $y\in \widetilde{V}$, as wanted.

  It remains to show that $S$ is decomposable. Note that $T$ is decomposable, since $R$ and $Q$ are as well, and the arrows in $\widetilde{V}\times [1]$ are not composites. The only new arrows that are in $S$ but not in $T$ are of the form $(x,0)\to (y,1)$ with $x<y$. Since $R$ and $Q$ are closed under transitivity, if such an arrow is in $S$, it is because there exists  $z\in \widetilde{V}$, such that $xQz$ and $zRy$, as below.
  \begin{center}
      \tikzset{every picture/.style={line width=0.75pt}} 

\begin{tikzpicture}[x=0.75pt,y=0.75pt,yscale=-1,xscale=1]

\draw    (218.5,239.5) -- (280,239.5) ;
\draw    (280,200.62) -- (280,239.5) ;
\draw    (280,200.62) -- (349.5,200.62) ;
\draw  [fill={rgb, 255:red, 74; green, 74; blue, 74 }  ,fill opacity=1 ] (348.94,239.88) .. controls (348.94,240.64) and (349.56,241.26) .. (350.32,241.26) .. controls (351.08,241.26) and (351.7,240.64) .. (351.7,239.88) .. controls (351.7,239.12) and (351.08,238.5) .. (350.32,238.5) .. controls (349.56,238.5) and (348.94,239.12) .. (348.94,239.88) -- cycle ;
\draw  [fill={rgb, 255:red, 74; green, 74; blue, 74 }  ,fill opacity=1 ] (348.12,200.62) .. controls (348.12,201.38) and (348.74,202) .. (349.5,202) .. controls (350.26,202) and (350.88,201.38) .. (350.88,200.62) .. controls (350.88,199.86) and (350.26,199.24) .. (349.5,199.24) .. controls (348.74,199.24) and (348.12,199.86) .. (348.12,200.62) -- cycle ;
\draw  [fill={rgb, 255:red, 74; green, 74; blue, 74 }  ,fill opacity=1 ] (278.62,239.5) .. controls (278.62,240.26) and (279.24,240.88) .. (280,240.88) .. controls (280.76,240.88) and (281.38,240.26) .. (281.38,239.5) .. controls (281.38,238.74) and (280.76,238.12) .. (280,238.12) .. controls (279.24,238.12) and (278.62,238.74) .. (278.62,239.5) -- cycle ;
\draw  [fill={rgb, 255:red, 74; green, 74; blue, 74 }  ,fill opacity=1 ] (278.62,200.62) .. controls (278.62,201.38) and (279.24,202) .. (280,202) .. controls (280.76,202) and (281.38,201.38) .. (281.38,200.62) .. controls (281.38,199.86) and (280.76,199.24) .. (280,199.24) .. controls (279.24,199.24) and (278.62,199.86) .. (278.62,200.62) -- cycle ;
\draw  [fill={rgb, 255:red, 74; green, 74; blue, 74 }  ,fill opacity=1 ] (219.12,200.12) .. controls (219.12,200.88) and (219.74,201.5) .. (220.5,201.5) .. controls (221.26,201.5) and (221.88,200.88) .. (221.88,200.12) .. controls (221.88,199.36) and (221.26,198.74) .. (220.5,198.74) .. controls (219.74,198.74) and (219.12,199.36) .. (219.12,200.12) -- cycle ;
\draw  [fill={rgb, 255:red, 74; green, 74; blue, 74 }  ,fill opacity=1 ] (217.12,239.5) .. controls (217.12,240.26) and (217.74,240.88) .. (218.5,240.88) .. controls (219.26,240.88) and (219.88,240.26) .. (219.88,239.5) .. controls (219.88,238.74) and (219.26,238.12) .. (218.5,238.12) .. controls (217.74,238.12) and (217.12,238.74) .. (217.12,239.5) -- cycle ;

\draw (345,181) node [anchor=north west][inner sep=0.75pt]  [font=\scriptsize]  {$( y,1)$};
\draw (350.82,232.5) node [anchor=north west][inner sep=0.75pt]  [font=\scriptsize]  {$( y,0)$};
\draw (178,237) node [anchor=north west][inner sep=0.75pt]  [font=\scriptsize]  {$( x,0)$};
\draw (203,178.5) node [anchor=north west][inner sep=0.75pt]  [font=\scriptsize]  {$( x,1)$};
\draw (275.32,241) node [anchor=north west][inner sep=0.75pt]  [font=\scriptsize]  {$( z,0)$};
\draw (266.82,178) node [anchor=north west][inner sep=0.75pt]  [font=\scriptsize]  {$( z,1)$};

\end{tikzpicture}
  \end{center}
  \noindent Thus $y$ and $z$ are in the same connected component with respect to $R$, and hence $y \in \widetilde{V}$.  
  
  Let $w$ be the minimal element in the connected component of $z$ with respect to $R$, as depicted by the top red arrow in the diagram below.
\begin{center}
\tikzset{every picture/.style={line width=0.75pt}} 

\begin{tikzpicture}[x=0.75pt,y=0.75pt,yscale=-1,xscale=1]

\draw    (100,370.12) -- (161.5,370.12) ;
\draw    (162,342.62) -- (161.5,371.5) ;
\draw    (103.88,342.12) -- (231.5,342.62) ;
\draw  [fill={rgb, 255:red, 74; green, 74; blue, 74 }  ,fill opacity=1 ] (228.94,369.88) .. controls (228.94,370.64) and (229.56,371.26) .. (230.32,371.26) .. controls (231.08,371.26) and (231.7,370.64) .. (231.7,369.88) .. controls (231.7,369.12) and (231.08,368.5) .. (230.32,368.5) .. controls (229.56,368.5) and (228.94,369.12) .. (228.94,369.88) -- cycle ;
\draw  [fill={rgb, 255:red, 74; green, 74; blue, 74 }  ,fill opacity=1 ] (230.12,342.62) .. controls (230.12,343.38) and (230.74,344) .. (231.5,344) .. controls (232.26,344) and (232.88,343.38) .. (232.88,342.62) .. controls (232.88,341.86) and (232.26,341.24) .. (231.5,341.24) .. controls (230.74,341.24) and (230.12,341.86) .. (230.12,342.62) -- cycle ;
\draw  [fill={rgb, 255:red, 74; green, 74; blue, 74 }  ,fill opacity=1 ] (160.12,370.12) .. controls (160.12,370.88) and (160.74,371.5) .. (161.5,371.5) .. controls (162.26,371.5) and (162.88,370.88) .. (162.88,370.12) .. controls (162.88,369.36) and (162.26,368.74) .. (161.5,368.74) .. controls (160.74,368.74) and (160.12,369.36) .. (160.12,370.12) -- cycle ;
\draw  [fill={rgb, 255:red, 74; green, 74; blue, 74 }  ,fill opacity=1 ] (160.62,342.62) .. controls (160.62,343.38) and (161.24,344) .. (162,344) .. controls (162.76,344) and (163.38,343.38) .. (163.38,342.62) .. controls (163.38,341.86) and (162.76,341.24) .. (162,341.24) .. controls (161.24,341.24) and (160.62,341.86) .. (160.62,342.62) -- cycle ;
\draw  [fill={rgb, 255:red, 74; green, 74; blue, 74 }  ,fill opacity=1 ] (101.12,342.12) .. controls (101.12,342.88) and (101.74,343.5) .. (102.5,343.5) .. controls (103.26,343.5) and (103.88,342.88) .. (103.88,342.12) .. controls (103.88,341.36) and (103.26,340.74) .. (102.5,340.74) .. controls (101.74,340.74) and (101.12,341.36) .. (101.12,342.12) -- cycle ;
\draw  [fill={rgb, 255:red, 74; green, 74; blue, 74 }  ,fill opacity=1 ] (98.62,370.12) .. controls (98.62,370.88) and (99.24,371.5) .. (100,371.5) .. controls (100.76,371.5) and (101.38,370.88) .. (101.38,370.12) .. controls (101.38,369.36) and (100.76,368.74) .. (100,368.74) .. controls (99.24,368.74) and (98.62,369.36) .. (98.62,370.12) -- cycle ;
\draw    (231.5,341.24) -- (231,370.12) ;
\draw    (100,370.12) -- (80.09,360.02) ;
\draw    (101.12,342.12) -- (80.52,330.49) ;
\draw  [fill={rgb, 255:red, 74; green, 74; blue, 74 }  ,fill opacity=1 ] (79.14,330.49) .. controls (79.14,331.25) and (79.76,331.87) .. (80.52,331.87) .. controls (81.28,331.87) and (81.9,331.25) .. (81.9,330.49) .. controls (81.9,329.72) and (81.28,329.1) .. (80.52,329.1) .. controls (79.76,329.1) and (79.14,329.72) .. (79.14,330.49) -- cycle ;
\draw  [fill={rgb, 255:red, 74; green, 74; blue, 74 }  ,fill opacity=1 ] (80.09,360.02) .. controls (80.09,360.78) and (80.71,361.4) .. (81.47,361.4) .. controls (82.24,361.4) and (82.86,360.78) .. (82.86,360.02) .. controls (82.86,359.25) and (82.24,358.63) .. (81.47,358.63) .. controls (80.71,358.63) and (80.09,359.25) .. (80.09,360.02) -- cycle ;
\draw [color={rgb, 255:red, 208; green, 2; blue, 27 }  ,draw opacity=1 ]   (80.52,330.49) -- (162,342.62) ;
\draw [color={rgb, 255:red, 208; green, 2; blue, 27 }  ,draw opacity=1 ]   (81.47,360.02) -- (161.5,370.12) ;

\draw (227,323) node [anchor=north west][inner sep=0.75pt]  [font=\scriptsize]  {$( y,1)$};
\draw (231.82,360.5) node [anchor=north west][inner sep=0.75pt]  [font=\scriptsize]  {$( y,0)$};
\draw (79,373.5) node [anchor=north west][inner sep=0.75pt]  [font=\scriptsize]  {$( x,0)$};
\draw (81.52,342.6) node [anchor=north west][inner sep=0.75pt]  [font=\scriptsize]  {$( x,1)$};
\draw (152.32,373.5) node [anchor=north west][inner sep=0.75pt]  [font=\scriptsize]  {$( z,0)$};
\draw (148.82,320) node [anchor=north west][inner sep=0.75pt]  [font=\scriptsize]  {$( z,1)$};
\draw (35,318) node [anchor=north west][inner sep=0.75pt]  [font=\scriptsize]  {$( w,1)$};
\draw (36.5,349.5) node [anchor=north west][inner sep=0.75pt]  [font=\scriptsize]  {$( w,0)$};
\end{tikzpicture}
\end{center}
  \noindent Note that $w\in P\setminus I$, since $P\setminus I$ is downward-closed and the intersection of the connected component of $z$ with $P \setminus I$ is non-empty. Given that $w\in \chi (R)=I \cup \chi(Q)$, we then must have that $w\in \chi(Q)$. Thus $w$ is the minimal element in its connected component with respect to $Q$, and since $R\leq Q$, we have the red arrow at the bottom, implying that this is the connected component of $z$. Given that $xQ z$, this is also the connected component of $x$, and thus we have $w\leq x \leq z$. Since $w R z$, decomposability of $R$ implies that $xRz$, and hence $xRy$.

  Any decomposition of $(x,0)\to(y,1)$ is of the form $(x,0)\to(u,0)\to(y,1)$ or $(x,0)\to(u,1)\to(y,1)$ for some $u$ such that $x\leq u \leq y$. Since $xRy$ and $R$ is decomposable, we have $xRu$ and $uRy$, meaning that $(x,0) \to (u,0)$ and $(u,1) \to (y,1)$ are both in $S$. Further, since $\widetilde{V}$ is downward-closed, we also have that $(u,0)\to (u,1)$ is in $S$. Closure under transitivity implies then that both of the components of either our decompositions are in $S$, proving decomposability.
  
  We leave it to the reader to prove that these two constructions are inverses of each other. 
\end{proof}

\appendix
\section{Explicit computations}\label{sec:table}

The following table records the output of a short Sage Math script that, for $P$ a finite join-semilattice, computes $W(P)$, its diagonalization, and an eigenbasis. This allows us to determine the eigenvalues $\Lambda=\Lambda(P)$ and coefficients $b_\lambda\in \QQ$, $\lambda\in \Lambda$ of \autoref{cor:fmla}, giving
\[
  \#\SubMon(P\times\chain n) = \sum_{\lambda\in \Lambda}b_\lambda\lambda^n.
\]
The table displays the finite join-semilattice $P$, eigenvalues $\lambda\in \Lambda(P)$, coefficients $b_\lambda\in \QQ$, and normalized coefficients $\hat b_\lambda := b_\lambda\prod_{\mu\ne \lambda\in \Lambda}(\lambda-\mu)\in \ZZ$ (see \autoref{rmk:interpretation-of-coeffs}).

We note that our computations reveal that $4\in \Lambda(M_3)$ but $b_4(M_3)=0$. This is the only example we are aware of for which $b_\lambda=0$.

{\renewcommand{\arraystretch}{1.2}%
\begin{longtable}{@{}llll@{}}
\caption{Several finite join-semilattices $P$, and the associated eigenvalues and normalized coefficients $\hat b_\lambda := b_\lambda\prod_{\mu\neq\lambda} (\lambda - \mu)$, as discussed in \aref{rmk:interpretation-of-coeffs}. The cases included are either products of chains, lattices $M_k = \{\bot,1,2,\ldots,k,\top\}$ with integer elements incomparable, or the pentagon lattice $N_5$.}
\label{table:normalized-coeffs}\\
\toprule
P & $\lambda$ & $b_\lambda$ & $\hat b_\lambda$\\
\midrule
\endfirsthead

\toprule
P & $\lambda$ & $b_\lambda$ & $\hat b_\lambda$\\
\midrule
\endhead

\midrule
\multicolumn{4}{r}{\emph{Continued on next page}}\\
\endfoot

\bottomrule
\endlastfoot

$[1]\times [1]$ & 2 & 1/2 & 4\\
 & 3 & 1 & -3\\
 & 4 & -12 & -48\\
 & 6 & 35/2 & -420\\
\addlinespace

$[2]\times [1]$ & 2 & -13/48 & -1560\\
 & 3 & -111/35 & 2664\\
 & 4 & 14 & 4032\\
 & 5 & 16 & -2880\\
 & 6 & 315/8 & 7560\\
 & 7 & -154 & 55440\\
 & 8 & -406/15 & -38976\\
 & 10 & 15471/112 & -5569560\\
\addlinespace

$[3]\times [1]$ & 2 & 7213/34320 & 109060560\\
 & 3 & 28631/8400 & -148423104\\
 & 4 & -101729/13860 & -58595904\\
 & 5 & -797/15 & 128540160\\
 & 6 & 719/24 & 32613840\\
 & 7 & -45157/120 & 260104320\\
 & 8 & 59599/75 & 480606336\\
 & 9 & 1482/5 & -215115264\\
 & 10 & 735867/560 & 1589472720\\
 & 11 & -840037/336 & 7257919680\\
 & 12 & -42493/60 & -7709929920\\
 & 13 & -525746/2475 & 16958462976\\
 & 15 & 12800093/8580 & 9289795495680\\
\addlinespace

$[4]\times [1]$ & 2 & -335766173/1551950400 & -1462113946298880\\
 & 3 & -5004577/1501500 & 1255261295296512\\
 & 4 & 32846000389/3216213000 & 454063109377536\\
 & 5 & 9458167/96096 & -823723113830400\\
 & 6 & -423949649/960960 & -988989741187200\\
 & 7 & 799421317/831600 & -773584020034560\\
 & 8 & 250240977/143000 & 653813609283072\\
 & 9 & -17901851/5250 & 748364969189376\\
 & 10 & 22150545/4928 & 723419079264000\\
 & 11 & -796666013/23520 & 4955899933670400\\
 & 12 & 2096204/75 & 4600535229480960\\
 & 13 & 45969602783/4158000 & -2541935155488768\\
 & 14 & 15454724597/1848000 & 3364926277055616\\
 & 15 & 21650713/572 & -33940558368460800\\
 & 16 & -26792689523/540540 & -129633748988083200\\
 & 17 & -1273644454/75075 & 177477158195527680\\
 & 18 & -147245784937/21021000 & -439673557889323008\\
 & 19 & -168120903799/68068000 & 1757022150832736256\\
 & 21 & 665038415449/31039008 & 2606354714139837696000\\
\addlinespace

$[1]^3$ & 2 & 2453/12480 & 2649240\\
 & 3 & -8621/21420 & 620712\\
 & 4 & -351/32 & -4447872\\
 & 5 & 108/175 & -116640\\
 & 6 & -13385/288 & -6746040\\
 & 7 & 96831/520 & -34859160\\
 & 8 & 493/10 & 23853312\\
 & 10 & -2956707/11200 & 1064414520\\
 & 12 & -31273/2520 & -360264960\\
 & 15 & -3336709/23400 & 92493573480\\
 & 20 & 159923969/530400 & 19344403290240\\
\addlinespace

$M_3$ & 2 & 3/16 & 12\\
 & 3 & 3/7 & -9\\
 & 4 & 0 & 0\\
 & 6 & -105/8 & 1260\\
 & 10 & 2745/112 & 32940\\
\addlinespace

$M_4$ & 2 & 1/64 & 16\\
 & 3 & 3/35 & -27\\
 & 4 & 12/7 & 576\\
 & 6 & -35/8 & 5040\\
 & 10 & -2745/224 & -131760\\
 & 18 & 80223/2240 & -11552112\\
\addlinespace

$M_5$ & 2 & -125/2048 & -2000\\
 & 3 & -19/217 & 855\\
 & 4 & 12/7 & 17280\\
 & 6 & -25/32 & 25200\\
 & 10 & -13725/1792 & -1976400\\
 & 18 & -80223/7168 & 57760560\\
 & 34 & 3559545/63488 & 17940106800\\
\addlinespace

$M_6$ & 2 & -5241/65536 & -167712\\
 & 3 & -225/1519 & 91125\\
 & 4 & 36/31 & 725760\\
 & 6 & 75/64 & -2268000\\
 & 10 & -233325/50176 & -67197600\\
 & 18 & -80223/8192 & 2425943520\\
 & 34 & -10678635/1015808 & -107640640800\\
 & 66 & 9342726965/99549184 & -121081741466400\\
\addlinespace

$M_7$ & 2 & -574609/8388608 & -18387488\\
 & 3 & -3933/27559 & 11150055\\
 & 4 & 110/217 & 39916800\\
 & 6 & 16205/7936 & -490039200\\
 & 10 & -112545/57344 & -3403360800\\
 & 18 & -7781631/917504 & 235316521440\\
 & 34 & -373752225/32505856 & -11302267284000\\
 & 66 & -9342726965/910163968 & 847572190264800\\
 & 130 & 38345903755445/231181647872 & 3478740388693970400\\
\addlinespace

$N_5$ & 2 & -1/3 & -48\\
 & 3 & 11/5 & -66\\
 & 4 & 12 & 192\\
 & 5 & -20 & 360\\
 & 6 & -35 & -1680\\
 & 8 & 812/15 & -38976\\

\end{longtable}%
}

\bibliographystyle{alpha}
\bibliography{closure}

\end{document}